\pdfoutput=1

\documentclass{amsart}
\usepackage[utf8]{inputenc}
\usepackage[T1]{fontenc}

\usepackage{amssymb}
\usepackage{braket}
\usepackage{mathrsfs}
\usepackage{ifthen}
\usepackage{here}
\usepackage{todonotes}
\usepackage{tikz}
\usetikzlibrary{patterns,decorations.pathreplacing,calligraphy}
\usepackage{comment} 
\usepackage{mleftright}
\usepackage[pagebackref,hypertexnames=false]{hyperref} 
\usepackage{cleveref}
 \usepackage[all]{xy}
 \usepackage{amscd}
 \usepackage[alphabetic,backrefs,msc-links]{amsrefs}
 \usepackage{color}
 \usepackage{enumitem}
\newlist{steps}{enumerate}{1}
\setlist[steps, 1]{label = Step \arabic*:}
\usepackage[abs]{overpic}
\usepackage{tikz-cd}
\usepackage{pinlabel}
\usepackage{amsmath, amsthm,verbatim,amsfonts, graphicx}

\usepackage{geometry}
\geometry{verbose,tmargin=3cm,bmargin=3cm,lmargin=2.3cm,rmargin=2.3cm,footskip=1cm}

\makeatletter
\DeclareRobustCommand\widecheck[1]{{\mathpalette\@widecheck{#1}}}
\def\@widecheck#1#2{%
   \setbox\z@\hbox{\m@th$#1#2$}%
   \setbox\tw@\hbox{\m@th$#1%
      {%
         \vrule\@width\z@\@height\ht\z@
         \vrule\@height\z@\@width\wd\z@}$}%
   \dp\tw@-\ht\z@
   \@tempdima\ht\z@ \advance\@tempdima2\ht\tw@ \divide\@tempdima\thr@@
   \setbox\tw@\hbox{%
      \raise\@tempdima\hbox{\scalebox{1}[-1]{\lower\@tempdima\box\tw@}}}%
   {\ooalign{\box\tw@ \cr \box\z@}}}
\makeatother

\theoremstyle{plain}
\newtheorem*{theorem*}{Theorem}
\newtheorem{thm}{Theorem}[section]
\crefname{thm}{Theorem}{Theorems}
\Crefname{thm}{Theorem}{Theorems}
\newtheorem{prop}[thm]{Proposition}
\crefname{prop}{Proposition}{Propositions}
\Crefname{prop}{Proposition}{Propositions}
\newtheorem{lem}[thm]{Lemma}
\crefname{lem}{Lemma}{Lemmas}
\Crefname{lem}{Lemma}{Lemmas}
\newtheorem{cor}[thm]{Corollary}
\crefname{cor}{Corollary}{Corollaries}
\Crefname{cor}{Corollary}{Corollaries}
\newtheorem{rem}[thm]{Remark}
\crefname{rem}{Remark}{Remarks}
\Crefname{rem}{Remark}{Remarks}

\crefname{claim}{Claim}{Claims}
\Crefname{claim}{Claim}{Claims}

\crefname{property}{Property}{Properties}
\Crefname{property}{Property}{Properties}

\crefname{problem}{Problem}{Problems}
\Crefname{problem}{Problem}{Problems}

\crefname{conjecture}{Conjecture}{Conjecture}
\Crefname{conjecture}{Conjecture}{Conjecture}

\theoremstyle{definition}
\newtheorem{defn}[thm]{Definition}
\crefname{defn}{Definition}{Definitions}
\Crefname{defn}{Definition}{Definitions}

\crefname{notation}{Notation}{Notations}
\Crefname{notation}{Notation}{Notations}

\crefname{convention}{Convention}{Conventions}
\Crefname{convention}{Convention}{Conventions}
\crefname{cond}{Condition}{Conditions}
\Crefname{cond}{Condition}{Conditions}

\crefname{assum}{Assumption}{Assumptions}
\Crefname{assum}{Assumption}{Assumptions}

\Crefname{ques}{Question}{Question}

\theoremstyle{remark}
\crefname{rem}{Remark}{Remarks}
\Crefname{rem}{Remark}{Remarks}

\crefname{ex}{Example}{Examples}
\Crefname{ex}{Example}{Examples}

\crefname{section}{Section}{Sections}
\Crefname{section}{Section}{Sections}
\crefname{subsection}{Subsection}{Subsections}
\Crefname{subsection}{Subsection}{Subsections}
\crefname{figure}{Figure}{Figures}
\Crefname{figure}{Figure}{Figures}

\newcommand{\Z}{\mathbb{Z}}

\newcommand{\fraks}{\mathfrak{s}}

\newcommand{\C}{\mathbb{C}}







\newcommand{\R}{\mathbb R}

\newcommand*{\QEDB}{\null\nobreak\hfill\ensuremath{\square}}%

\tikzset{
contains/.style = {draw=none,"\in" description,sloped}
}

\def\det{\mathrm{det}}

\def\dim{\mathrm{dim}}

\def\id{\mathrm{Id}}

\newcommand{\mbar}[1]{{\ooalign{\hfil#1\hfil\crcr\raise.167ex\hbox{--}}}}

\def\wt{\widetilde}
\def\H{\mathbb{H}}

     \RequirePackage{rotating}                   
    \def\HMt{%
       \setbox0=\hbox{$\widehat{\mathit{HM}}$}
       \setbox1=\hbox{$\mathit{HM}$}
       \dimen0=1.1\ht0
       \advance\dimen0 by 1.17\ht1
       \smash{\mskip2mu\raise\dimen0\rlap{%
          \begin{turn}{180}
              {$\widehat{\phantom{\mathit{HM}}}$}
           \end{turn}} \mskip-2mu    
                \mathit{HM}
                    }{\vphantom{\widehat{\mathit{HM}}}}{}}

%
%
%

\title{A satellite formula for real Seiberg--Witten Floer homotopy types}


\author{Jin Miyazawa}
\address{Reserch Institute for Mathematical Sciences, Kyoto University, Japan}
\email{miyazawa.jin.5a@kyoto-u.ac.jp}

\author{JungHwan Park}
\address{Department of Mathematical Sciences, KAIST, Republic of Korea}
\email{jungpark0817@kaist.ac.kr}

\author{Masaki Taniguchi} 
\address{Department of Mathematics, Kyoto University, Japan}
\email{taniguchi.masaki.7m@kyoto-u.ac.jp}

\begin{document}


\begin{abstract}
We establish a satellite formula for the real Seiberg--Witten Floer homotopy types of knots with odd patterns. Using this, we derive several applications to knot concordance theory. The satellite formula follows from a version of the excision theorem for real Floer homotopy types. Additionally, we show that the concordance invariants arising from real Seiberg-–Witten theory depend only on the knot's zero-framed surgery.
\end{abstract}

\maketitle

\section{Introduction} 
Let $\mathcal{C}$ denote the smooth knot concordance group. A knot is called \emph{slice} if it represents the same class as the unknot $U$ in $\mathcal{C}$. For a knot $K$ in the three-sphere $S^3$ and a knot $P$ in the solid torus $S^1 \times D^2$, called a \emph{pattern}, we can construct the \emph{untwisted satellite knot} of $K$, denoted by $P(K)$. This construction induces a map, known as the \emph{satellite operation}, given by $$ P \colon \mathcal{C} \to \mathcal{C} ; \qquad [K] \mapsto [P(K)].$$ In this paper, we investigate satellite operations from the perspective of real Seiberg--Witten theory. 

The \emph{winding number} of a pattern $P$ is defined as the algebraic intersection number of $P$ with a meridional disk of the solid torus. Roughly speaking, we will show that any nontrivial topological phenomena detected by real Seiberg-Witten theory persist under the satellite operation, provided the pattern has an odd winding number. As a result, we obtain the following theorem on satellites of $E_{2,1}$, where $E_{2,1}$ denotes the $(2,1)$-cable of the figure-eight knot throughout the paper.

\begin{thm}\label{thm:topologicalmain} If $P$ is a pattern with an odd winding number and $P(U)$ is slice, then any finite self-connected sum of $P(E_{2,1})$ does not bound a normally immersed disk in the four-ball $B^4$ with only negative double points. In particular, the knot $P(E_{2,1})$ has infinite order in $\mathcal{C}$. \end{thm}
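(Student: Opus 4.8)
\emph{Strategy.} The plan is to combine the satellite formula of this paper with the (known) detection of $E_{2,1}$ by real Seiberg--Witten theory, using that the real Seiberg--Witten Floer homotopy type of a knot obstructs bounding a normally immersed disk in $B^4$ with only negative double points. I will run the argument through a $\mathbb{Z}$-valued invariant $\kappa_{\mathbb{R}}(K)$ extracted from the local equivalence class of the real Seiberg--Witten Floer homotopy type $SWF_{\mathbb{R}}(S^3_0(K))$ of the zero-framed surgery; it can equally be carried out directly with local equivalence classes of spectra. I would first isolate the three formal properties of $\kappa_{\mathbb{R}}$ that the proof needs: (i) $\kappa_{\mathbb{R}}$ is a smooth concordance invariant with $\kappa_{\mathbb{R}}(U)=0$; (ii) $\kappa_{\mathbb{R}}$ is superadditive under connected sum, $\kappa_{\mathbb{R}}(K_1\#K_2)\ge \kappa_{\mathbb{R}}(K_1)+\kappa_{\mathbb{R}}(K_2)$; and (iii) if $K$ bounds a normally immersed disk in $B^4$ with only negative double points, then $\kappa_{\mathbb{R}}(K)\le 0$. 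Property (iii) is the geometric heart and would be proved by the standard blow-up mechanism adapted to the real setting: $n$ negative double points are absorbed by blowing up $n$ times, producing an embedded disk in $B^4\#\,n\,\overline{\mathbb{CP}}{}^{2}$ whose complement is a negative-definite real four-manifold bounding $S^3_0(K)$, and the real Seiberg--Witten cobordism map for such a filling yields the inequality (exactly as Frøyshov-type bounds do in the non-equivariant and $\mathrm{Pin}(2)$-equivariant theories).

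\emph{The satellite input.} Next I would invoke the satellite formula. Because $P$ has odd winding number, the excision theorem of this paper applies and expresses $SWF_{\mathbb{R}}(S^3_0(P(K)))$ in terms of $SWF_{\mathbb{R}}(S^3_0(K))$ glued, along the companion torus $\partial\nu(K)\subset S^3_0(P(K))$, to a contribution $X_P$ of the pattern side that depends only on $P$; the oddness of the winding number is precisely what makes this splitting torus and the ambient involution compatible in the real (involutive) theory. Specializing the companion to $U$: since $P(U)$ is slice, concordance invariance gives that $SWF_{\mathbb{R}}(S^3_0(P(U)))$ is locally trivial, and as $SWF_{\mathbb{R}}(S^3_0(U))=SWF_{\mathbb{R}}(S^1\times S^2)$ is also trivial, the pattern contribution $X_P$ must itself be locally trivial. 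Therefore, for \emph{every} companion $K$, the satellite formula degenerates to a local equivalence $SWF_{\mathbb{R}}(S^3_0(P(K)))\simeq SWF_{\mathbb{R}}(S^3_0(K))$, and in particular $\kappa_{\mathbb{R}}(P(K))=\kappa_{\mathbb{R}}(K)$ (only the inequality $\ge$ is needed below). Applying this to $K=E_{2,1}$ and feeding in the cited fact that real Seiberg--Witten theory detects $E_{2,1}$ in the strong form $\kappa_{\mathbb{R}}(E_{2,1})>0$ gives $\kappa_{\mathbb{R}}(P(E_{2,1}))=\kappa_{\mathbb{R}}(E_{2,1})>0$.

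\emph{Assembling the proof.} By superadditivity (ii),
\[
\kappa_{\mathbb{R}}\big(\#^{n}P(E_{2,1})\big)\ \ge\ n\,\kappa_{\mathbb{R}}\big(P(E_{2,1})\big)\ =\ n\,\kappa_{\mathbb{R}}(E_{2,1})\ >\ 0\qquad(n\ge 1),
\]
so by property (iii) no finite self-connected sum of $P(E_{2,1})$ bounds a normally immersed disk in $B^4$ with only negative double points. This already contains the ``in particular'': if $P(E_{2,1})$ had finite order $m$ in $\mathcal{C}$, then $\#^{m}P(E_{2,1})$ would be slice and hence bound an embedded disk, which is vacuously a normally immersed disk with only negative double points, contradicting the displayed inequality at $n=m$; hence $P(E_{2,1})$ has infinite order. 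The hard part, apart from the excision theorem itself, will be establishing property (iii) for $SWF_{\mathbb{R}}$ --- the real-equivariant analogue of the Frøyshov inequality for negative-definite fillings, with the correct grading shift --- and, within the satellite step, checking that the real involutive structures genuinely match across the companion-torus decomposition, which is exactly where the odd-winding-number hypothesis is used.
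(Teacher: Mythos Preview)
Your overall architecture matches the paper's: detect $E_{2,1}$ via the real $\kappa$-invariant, then transport this to $P(E_{2,1})$ and its self-connected sums via the satellite formula. There is, however, a genuine gap at your step~(ii). The invariant $\kappa_R$ is not known to be superadditive under connected sum, and the paper does not establish any such property; for Manolescu-type $\kappa$-invariants the available connected-sum inequalities typically go the wrong way, or hold only for restricted classes such as knots with spherical Floer homotopy type. So the chain $\kappa_{\mathbb{R}}\big(\#^{n}P(E_{2,1})\big)\ge n\,\kappa_{\mathbb{R}}(E_{2,1})$ is unjustified as written.

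The paper circumvents this in two moves. First, instead of superadditivity it uses that $[K]\mapsto [SWF_R(K)]_{\mathrm{loc}}$ is a group \emph{homomorphism} $\mathcal{C}\to\mathcal{LE}$; together with the satellite formula and the sliceness of $P(U)$ this gives an exact equality of local equivalence classes $[SWF_R(\#^{n}P(E_{2,1}))]_{\mathrm{loc}}=[SWF_R(\#^{n}E_{2,1})]_{\mathrm{loc}}$, hence $\kappa_R\big(-nP(E_{2,1})\big)=\kappa_R(-nE_{2,1})$ on the nose. Second --- and this is the input you are treating as already ``cited'' --- the paper proves $\kappa_R(-mE_{2,1})\ge\tfrac12$ directly for \emph{every} $m\ge1$, not just $m=1$: one exhibits an explicit surface cobordism from $mT_{2,19}$ to $-mE_{2,1}$ in punctured $2m\,\mathbb{CP}^2$ and applies a \emph{refined} real $10/8$-type inequality that gains an extra $+\tfrac12$ precisely because $SWF_R(mT_{2,19})$ is strongly spherical. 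Without that refinement one only obtains $\kappa_R(-mE_{2,1})\ge0$, which gives no contradiction. Your property~(iii) is then established as you sketch, via blow-up to a concordance in punctured $\ell\,\mathbb{CP}^2$ and the unrefined inequality, yielding $\kappa_R(-K)\le0$.

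In short: replace the superadditivity axiom by the homomorphism $\mathcal{C}\to\mathcal{LE}$, and recognise that $\kappa_R(-mE_{2,1})>0$ must be established for each $m$ separately --- this is exactly where the new $+\tfrac12$ refinement of the real $10/8$ inequality does essential work.
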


\noindent Here, we say that a surface is \emph{normally immersed} if it is smoothly immersed, with the only singularities being transverse double points in the interior. Note that \Cref{thm:topologicalmain}, in particular, applies to patterns such as $(\text{odd}, 1)$-cables, the Mazur pattern, and their iterates.

For the proof of Theorem~\ref{thm:topologicalmain}, we divide the argument into two main steps. First, we examine the knot $E_{2,1}$, which has garnered significant attention due to its potential as a counterexample to the slice-ribbon conjecture~\cite{Fox:1962, Miyazaki:1994}. Recent work~\cite{DKMPS:2024} established that $E_{2,1}$ is not slice and has infinite order in $\mathcal{C}$, with related results discussed in subsequent works~\cite{ACMPS:2023,KMT:2023, KPT:2024}. We provide a new proof of this and further strengthen the result using real Seiberg-Witten theory~\cite{KMT:2021, KMT:2024}, showing that any finite self-connected sum of $E_{2,1}$ does not bound a normally immersed disk in $B^4$ with only negative double points. We refer the readers to \cite{TW09, Na13, Nak15, Ka22, KMT:2021, Ji22, KMT:2023, Mi23, Li23, BH24, Ba25} for background on real Seiberg–Witten theory. In particular, we use the $K$-theoretic real Fr{\o}yshov invariant $\kappa_R$, introduced by Konno, Miyazawa, and Taniguchi in~\cite{KMT:2021}. In \Cref{Proof of the applications}, we improve the real $10/8$-type inequality established in~\cite[Theorem 1.3]{KMT:2021}, and prove that
\[
\frac{1}{2}  \leq \kappa_R(-E_{2,1}),
\]
where $-E_{2,1}$ denotes the reverse of the mirror image of $E_{2,1}$. We also show that the same inequality holds for any finite self-connected sum of $-E_{2,1}$.

Secondly, we uncover a surprising feature of real Seiberg–Witten theory: for any knot $K$, if $P$ is a pattern with odd winding number and $P(U)$ is slice, then $K$ and $P(K)$ belong to the same local equivalence class of real Floer homotopy type, a notion defined in~\cite{KMT:2024}. This is particularly notable, as an analogous phenomenon fails in many other theories, including Heegaard Floer theory~\cite{OzSz:2004}, involutive Heegaard Floer theory~\cite{Hendricks-Manolescu:2017}, Khovanov homology theory~\cite{Khovanov:2000}, equivariant singular instanton Floer theory~\cite{DS19, DISST2022}, and equivariant Seiberg–Witten theory~\cite{BH21, BH22, IT2024}.\footnote{These theories provide slice-torus invariants~\cite{Livingston:2004, Oz-Sz:2003, Rasmussen:2010, DISST2022, IT2024}. Since an $(n,1)$-cable of a positive torus knot is strongly quasipositive for any $n > 1$~\cite{Rudolph:1993, Rudolph:1998}, it follows that the slice-torus invariants of $K$ and $P(K)$ differ when $K$ is a positive torus knot and $P$ is an $(n,1)$-cable with $n > 1$. In fact, the ability of these theories to distinguish between $K$ and $P(K)$ has led to the discovery of many interesting phenomena~\cite{Levine:2016, KP:2018, FPR:2019, HKPM:2022, Collins:2022}.}  Moreover, we remark that the assumption of an odd winding number is necessary.  
For an even winding number, the relation remains unclear.  
For instance, while $U$ and its $(2,1)$-cable $U_{2,1}$ both have trivial local equivalence classes, $E$ and $E_{2,1}$ have distinct local equivalence classes~\cite{KMT:2021, KMT:2024, KPT:2024}.



The fact that $K$ and $P(K)$ belong to the same local equivalence class can be expressed as follows: if $P$ is a pattern with an odd winding number and $P(U)$ is slice, then
$$ P_* \colon \mathcal{LE} \to \mathcal{LE} ; \qquad [ SWF_R (K)]_{\text{loc}} \mapsto [SWF_R (P(K))]_{\text{loc}}$$
is the identity map, where $\mathcal{LE}$ denotes the local equivalence group of real Seiberg-Witten theory and $[SWF_R(K)]_{\text{loc}}$ denotes the local equivalence class of the real Floer homotopy type of $K$. For any knot $K$ and a nonnegative integer $n$, let $nK$ represent the connected sum of $K$ with itself $n$ times. Then the fact that $P_*$ acts as the identity map implies that
$$n[SWF_R(P(mK))]_{\text{loc}} = n[SWF_R(mK)]_{\text{loc}}=nm[SWF_R(K)]_{\text{loc}} \in \mathcal{LE},$$ for all nonnegative integers $n$ and $m$. In particular, any conclusion drawn about a knot $mK$ and its finite self-connected sum also applies to $P(mK)$ and its finite self-connected sum. Specifically, the conclusion of Theorem~\ref{thm:topologicalmain} extends to $P(mE_{2,1})$ for any positive integer $m$. Moreover, the assumption that $P(U)$ is slice in Theorem~\ref{thm:topologicalmain} is, in fact, unnecessary. Combining the above discussions, the largest family of knots obtained from satellites of $E_{2,1}$, for which we can conclude that they have infinite order in $\mathcal{C}$, is described as follows:

\begin{thm}\label{thm:topologicalmaingeneral}
If $P$ is a pattern with an odd winding number, then $P(mE_{2,1}) \mathbin{\#} -P(U)$ has infinite order in $\mathcal{C}$, where $m$ is any nonzero integer.
\end{thm}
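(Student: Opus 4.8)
The plan is to derive Theorem~\ref{thm:topologicalmaingeneral} formally from two ingredients whose statements appear above: the satellite formula for real Floer homotopy types, and the improved real $10/8$-type inequality $\tfrac12\leq\kappa_R(-E_{2,1})$ together with its extension to all finite self-connected sums of $-E_{2,1}$.

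First I would eliminate the companion summand $-P(U)$ by absorbing it into a new pattern. Given $P\subset S^1\times D^2$, let $Q$ be the pattern obtained by forming the connected sum of $P$ with the knot $-P(U)$ inside a small ball in $S^1\times D^2$ disjoint from the core circle. Then $Q(K)=P(K)\mathbin{\#}-P(U)$ for every knot $K$; the winding number of $Q$ equals that of $P$, hence is odd; and $Q(U)=P(U)\mathbin{\#}-P(U)$ is slice. So, after replacing $P$ by $Q$, the theorem reduces to the following assertion: if $P$ has odd winding number, $P(U)$ is slice, and $m$ is a nonzero integer, then $P(mE_{2,1})$ has infinite order in $\mathcal{C}$.

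Next I would feed this into the satellite formula. Since $P$ has odd winding number and $P(U)$ is slice, the formula says that $P_*$ is the identity on $\mathcal{LE}$, whence
\[
[SWF_R(P(mE_{2,1}))]_{\text{loc}}=[SWF_R(mE_{2,1})]_{\text{loc}}=m\,[SWF_R(E_{2,1})]_{\text{loc}}\in\mathcal{LE},
\]
the last equality being additivity of $[SWF_R(\cdot)]_{\text{loc}}$ under connected sum. Since $[K]\mapsto[SWF_R(K)]_{\text{loc}}$ is a well-defined homomorphism $\mathcal{C}\to\mathcal{LE}$ and $m\neq 0$, it is enough to show that $[SWF_R(E_{2,1})]_{\text{loc}}$ has infinite order in $\mathcal{LE}$. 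I would argue by contradiction: if $N\,[SWF_R(E_{2,1})]_{\text{loc}}=0$ for some $N\geq 1$, then $[SWF_R(-NE_{2,1})]_{\text{loc}}=-N\,[SWF_R(E_{2,1})]_{\text{loc}}=0$, so $-NE_{2,1}$ is locally equivalent to the unknot; since $\kappa_R$ depends only on the local equivalence class and $\kappa_R(U)=0$, this forces $\kappa_R(-NE_{2,1})=0$. But $-NE_{2,1}$ is the $N$-fold self-connected sum of $-E_{2,1}$, so the improved real $10/8$-type inequality from \Cref{Proof of the applications} gives $\kappa_R(-NE_{2,1})\geq\tfrac12$, a contradiction. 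Hence $[SWF_R(E_{2,1})]_{\text{loc}}$, and therefore $[SWF_R(P(mE_{2,1})\mathbin{\#}-P(U))]_{\text{loc}}=m\,[SWF_R(E_{2,1})]_{\text{loc}}$, has infinite order, which is exactly the statement of the theorem.

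Almost all of the difficulty has been delegated to the two assumed ingredients: the satellite formula, whose proof runs through the excision theorem for real Floer homotopy types, and the strengthened real $10/8$-type inequality for arbitrary self-connected sums of $-E_{2,1}$, where the behaviour of $\kappa_R$ under connected sums must be controlled. Within the argument above, the only point that really requires care is the opening reduction: one must check that connected-summing a pattern with a fixed local knot produces a genuine pattern of the same winding number, so that the hypothesis ``$P(U)$ slice'' of the satellite formula can be arranged without loss of generality. After that, the proof is a short computation in the abelian group $\mathcal{LE}$, using only that $[SWF_R(\cdot)]_{\text{loc}}$ is additive under connected sum and that $\kappa_R$ is a local equivalence invariant that vanishes on the unknot.
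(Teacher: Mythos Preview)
Your proposal is correct and follows essentially the same strategy as the paper: reduce to showing that the local equivalence class of $mE_{2,1}$ is nontrivial for all nonzero $m$, and deduce this from the inequality $\kappa_R(-NE_{2,1})\geq\tfrac12$ for all positive $N$. The only cosmetic difference is that you absorb $-P(U)$ into a new pattern $Q$ with $Q(U)$ slice before invoking the satellite formula, whereas the paper applies \cref{thm:mainhomotopytype} directly to $P$ and cancels the $[SWF_R(P(U))]_{\text{loc}}$ contribution against $[SWF_R(-P(U))]_{\text{loc}}$; these are manifestly equivalent.
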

\noindent In slightly more detail, the proof proceeds as follows. Using the equivalence of local equivalence classes of knots and their satellites, together with the inequality for $\kappa_R(-mE_{2,1})$, we conclude that for any winding number odd pattern $\overline{P}$ and any positive integer $m$,
\[
\frac{1}{2} \leq \kappa_R(-mE_{2,1}) = \kappa_R\left(\overline{P}(-mE_{2,1}) \mathbin{\#} -\overline{P}(U)\right),
\]
and the same inequality holds for any finite self-connected sum of such satellites. The inequality itself is of independent interest, as the invariant $\kappa_R$ is known to be notoriously difficult to compute.

We make the following two remarks. First, we consider the example $E_{2,1}$ to emphasize that, given the current state of other theories, the conclusions in this paper cannot be derived from them. In general, analogous conclusions hold for any knot representing a nontrivial class in $\mathcal{LE}$. For instance, consider the positive torus knot $T_{3,11}$. For a knot $K$ with $\mathrm{Arf}(K) = 0$, the \emph{stabilizing number} $\mathrm{sn}(K)$ is defined as the minimum number of copies of $S^2 \times S^2$ that must be connected summed to $B^4$ for $K$ to bound a smooth null-homologous disk in it. It was shown in \cite[Theorem 1.11]{KMT:2021} that $\mathrm{sn}(K)$ can differ arbitrarily from its topological counterpart, the \emph{topological stabilizing number} $\mathrm{sn}^{\mathrm{Top}}(K)$, by taking finite self-connected sums of $T_{3,11}$ and applying real Seiberg–Witten theory along with results from \cite{McCoy:2021, FMP:2022, BBL:2020}. Our work readily implies the following:

\begin{thm} \label{thm:stabnumber} 
There exists a knot $K$ in $S^3$ with $\mathrm{Arf}(K) = 0$, such as $T_{3,11}$ or any of its finite self-connected sums, for which the limit  
\[
\lim_{n \to \infty} \left( \mathrm{sn}(n P(K)) - \mathrm{sn}^{\mathrm{Top}}(n P(K)) \right) = \infty
\]
holds for any pattern $P$ with odd winding number where $P(U)$ is the unknot.
\end{thm}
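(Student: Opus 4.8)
The plan is to reduce the statement to \cite[Theorem~1.11]{KMT:2021}, whose proof already handles $nT_{3,11}$, by establishing three facts about the satellite operation: that the connected sum $nP(K)$ is itself a satellite of $nK$ by a pattern with unknotted companion; that such an operation cannot raise the topological stabilizing number; and that, by the results proved earlier in the paper, it leaves the real Seiberg--Witten lower bound on $\mathrm{sn}$ unchanged.

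First I would isolate a purely topological lemma: if $Q$ is any pattern with $Q(U)=U$, then $\mathrm{sn}^{\mathrm{Top}}(Q(J))\le\mathrm{sn}^{\mathrm{Top}}(J)$ for every knot $J$. Given a null-homologous locally flat disk $D$ bounded by $J$ in $W=B^4\mathbin{\#}m(S^2\times S^2)$ with $m=\mathrm{sn}^{\mathrm{Top}}(J)$, its tubular neighborhood $\nu(D)$ is a locally flat $4$-ball meeting $\partial W$ exactly in $\nu(J)=\partial D\times D^2$, which is the $0$-framed (Seifert-framed) neighborhood of $J$ because a parallel copy of $D$ links $J$ trivially. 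Under the identification $\nu(J)\cong S^1\times D^2$ defining the satellite, $Q(J)$ is the pattern curve $Q$; hence, viewed inside $\partial\nu(D)\cong S^3$, where $\partial D\times D^2$ is the standard $0$-framed solid torus, $Q(J)$ is isotopic to $Q(U)=U$ and therefore bounds a locally flat disk $\Delta$ in $\nu(D)$, properly embedded in $W$. Capping $\partial\Delta$ with a Seifert surface pushed into a boundary collar yields a closed surface contained in $\nu(D)\cup(\text{collar})$, a region whose second homology is generated by the capped-off class of $D$; this class maps to $0$ in $H_2(W)$ since $D$ is null-homologous, so $\Delta$ is null-homologous and $\mathrm{sn}^{\mathrm{Top}}(Q(J))\le m$.

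Next I would note that concatenating $n$ copies of the $(1,1)$-tangle model of $P$ along the solid torus produces a pattern $\widetilde P_n$ with $\widetilde P_n(J)=nP(J)$ for every $J$; in particular $\widetilde P_n(U)=nP(U)=U$, and $\widetilde P_n$ has the same odd winding number as $P$. Applying the lemma with $Q=\widetilde P_n$ and $J=nK$ gives $\mathrm{sn}^{\mathrm{Top}}(nP(K))\le\mathrm{sn}^{\mathrm{Top}}(nK)$. On the real Seiberg--Witten side, since $P$ has odd winding number and $P(U)=U$ is slice, the fact that $P_*$ is the identity on $\mathcal{LE}$ together with the additivity $[SWF_R(nL)]_{\mathrm{loc}}=n[SWF_R(L)]_{\mathrm{loc}}$ gives
\[
[SWF_R(nP(K))]_{\mathrm{loc}}=n\,[SWF_R(P(K))]_{\mathrm{loc}}=n\,[SWF_R(K)]_{\mathrm{loc}}=[SWF_R(nK)]_{\mathrm{loc}} .
\]
Because the lower bound on $\mathrm{sn}$ established in \cite{KMT:2021} (via $\kappa_R$, and more generally via the real Floer homotopy type) is a function of the local equivalence class alone, we get $\mathrm{sn}(nP(K))\ge L(n)$, where $L(n)$ is the very lower bound used for $nK$ in the proof of \cite[Theorem~1.11]{KMT:2021}. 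Here $\mathrm{Arf}(nP(K))=0$, e.g.\ from $\Delta_{P(K)}(t)\doteq\Delta_K(t^w)$ with $w$ odd and $\mathrm{Arf}(K)=0$, so $\mathrm{sn}(nP(K))$ is defined; the case in which $K$ is a finite self-connected sum of $T_{3,11}$ reduces to $K=T_{3,11}$.

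Finally, the proof of \cite[Theorem~1.11]{KMT:2021} supplies, via \cite{McCoy:2021, FMP:2022, BBL:2020}, a topological upper bound $\mathrm{sn}^{\mathrm{Top}}(nK)\le U(n)$ with $L(n)-U(n)\to\infty$, whence
\[
\mathrm{sn}(nP(K))-\mathrm{sn}^{\mathrm{Top}}(nP(K)) \ge L(n)-\mathrm{sn}^{\mathrm{Top}}(nK) \ge L(n)-U(n) \longrightarrow \infty .
\]
I expect the main obstacle to be the combination of the topological lemma with the identification $nP(K)=\widetilde P_n(nK)$: arranging that the connected sum of satellites is again a satellite of the connected sum by a pattern with unknotted companion, and then carrying out the framing bookkeeping needed to see that the neighborhood of $J$ inside $\partial\nu(D)$ really is the standard $0$-framed solid torus, so that $Q(U)=U$ forces $Q(J)$ to bound a disk there. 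Everything downstream is a formal consequence of the satellite formula for $SWF_R$ proved earlier and of \cite[Theorem~1.11]{KMT:2021}.
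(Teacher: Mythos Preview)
Your topological lemma --- that $\mathrm{sn}^{\mathrm{Top}}(Q(J))\le\mathrm{sn}^{\mathrm{Top}}(J)$ whenever $Q(U)=U$ --- is correct and nicely argued. The gap is in the step that follows: the pattern $\widetilde P_n$ with $\widetilde P_n(nK)=nP(K)$ does not exist when the winding number exceeds~$1$. A pattern admits a $(1,1)$-tangle model (one strand through a meridional disk) only when its winding number is $\pm1$; for winding number $w>1$ any meridional disk meets $P$ in at least $w$ points, and no concatenation of such tangles produces a pattern in a single solid torus sending $nK$ to $nP(K)$. Concretely, take $P$ the $(3,1)$-cable and $K$ hyperbolic: the JSJ graph of $S^3\setminus\nu(2P(K))$ is
\[
(\text{$K$-compl.}) - (\text{cable space}) - C_2 - (\text{cable space}) - (\text{$K$-compl.}),
\]
with the knot boundary on $C_2$, while $S^3\setminus\nu(2K)$ has graph $(\text{$K$-compl.}) - C_2 - (\text{$K$-compl.})$. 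No JSJ torus of the first complement cuts off a piece with the second JSJ graph, so $2P(K)$ is not a satellite of $2K$ at all.

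The paper avoids this entirely by bounding the topological side through the algebraic genus rather than through $\mathrm{sn}^{\mathrm{Top}}$ directly: one uses the satellite inequality $g_{\mathbb Z}(P(K))\le g_{\mathbb Z}(P(U))+g_{\mathbb Z}(K)$ of McCoy and Feller--Miller--Pinz\'on-Caicedo, the bound $g_{\mathbb Z}(mT_{3,11})\le 8m$ from \cite{BBL:2020,McCoy:2021}, then sub\-additivity of $g_4^{\mathrm{Top}}$ under connected sum, and finally Conway--Nagel's $\mathrm{sn}^{\mathrm{Top}}\le g_4^{\mathrm{Top}}$. Your argument can be repaired along similar lines without the algebraic-genus machinery: apply your lemma once, to $P$ itself, obtaining $\mathrm{sn}^{\mathrm{Top}}(P(K))\le\mathrm{sn}^{\mathrm{Top}}(K)$, and then use subadditivity of $\mathrm{sn}^{\mathrm{Top}}$ under connected sum to get $\mathrm{sn}^{\mathrm{Top}}(nP(K))\le n\,\mathrm{sn}^{\mathrm{Top}}(K)$. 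One small additional point: the lower bound on $\mathrm{sn}$ from \cite{KMT:2021} involves not only $\kappa_R$ but also the ordinary signature, which is not a local-equivalence invariant; you need Litherland's formula (odd winding number and $P(U)=U$ give $\sigma(P(K))=\sigma(K)$) to conclude that the full lower bound transfers.
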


Secondly, the truly interesting aspect of these results lies in their conclusion for finite self-connected sums of satellites. For example, when the pattern is the Mazur pattern $Q$, it is known that the satellite knot $Q(K)$ bounds a smoothly embedded disk in a homology 4-ball if and only if $K$ does. This result follows from the fact that the zero-framed surgery on the satellite knot $Q(K)$ is smoothly homology cobordant to the zero-framed surgery on $K$ relative to their meridians~\cite[Corollary 2.2]{CFHH:2013} (see also~\cite[Corollary 5.2]{CDR:2014}). However, this equivalence does not extend to nontrivial finite self-connected sums of satellites, as the satellite operation is not a homomorphism in general (see e.g.\ \cite{Miller:2023}), and there is no direct relationship between the zero-framed surgery of a knot and that of its finite self-connected sum.



The following theorem is the key ingredient underlying all the results presented thus far:

\begin{thm}\label{thm:mainhomotopytype} Let $K$ be a knot in $S^3$. If $P$ is a pattern with an odd winding number, then the real Floer homotopy types \[ SWF_R(P(K)) \qquad \text{and} \qquad SWF_R(K) \wedge SWF_R(P(U)) \] are $\mathbb{Z}_4$-equivariantly stably homotopy equivalent. 
\end{thm}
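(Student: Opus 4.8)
The plan is to realize the satellite knot $P(K)$ as the result of a gluing construction along a torus and then deploy an excision-type theorem for real Seiberg--Witten Floer homotopy types to split the homotopy type of $P(K)$ into the smash product of the two pieces. Concretely, I would work with the zero-framed surgeries. Write $Y_K = S^3_0(K)$, $Y_{P(U)} = S^3_0(P(U))$, and $Y_{P(K)} = S^3_0(P(K))$. The pattern $P \subset S^1\times D^2$ together with its winding number $w$ (which is odd, say $w = 2k+1$) gives a cobordism picture: cutting $S^3_0(P(K))$ along the torus $\partial(S^1\times D^2)$ that separates the satellite companion region from the pattern region exhibits $Y_{P(K)}$ as glued from the knot exterior of $K$ and the exterior of $P$ in the solid torus. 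One first checks, using that $P(U)$ bounds (or more precisely using the surgery description), that the relevant $\mathrm{spin}^c$ and real structures match up; here the parity of the winding number is exactly what guarantees that the real (i.e.\ $O(1)$ or $\mathrm{Pin}(2)$-twisted) structure is compatible across the gluing, since the mod-$2$ count governing the existence/uniqueness of the real structure on the glued manifold is controlled by $w \bmod 2$.

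Next I would invoke the excision theorem for real Floer homotopy types. The statement I want is of the form: if $Y = Y_1 \cup_{T^2} Y_2$ is a gluing of real three-manifolds-with-torus-boundary along a torus, compatibly with the real structures and with the appropriate admissibility/positivity hypothesis, then $SWF_R(Y) \simeq SWF_R(Y_1,T^2) \wedge SWF_R(Y_2,T^2)$ $\mathbb{Z}_4$-equivariantly and stably. Applying this with $Y_1$ the piece carrying $K$ and $Y_2$ the piece carrying the pattern $P$, one gets $SWF_R(P(K)) \simeq A(K) \wedge B(P)$ for some homotopy types $A(K), B(P)$ depending only on the respective pieces. The key point is that $A(K)$ sees only the knot exterior of $K$ with a fixed boundary parametrization, hence $A(K) \simeq SWF_R(K) \wedge A(U)$ by the same excision applied to the trivial pattern (writing $K = $ (core of a solid torus)$(K)$ tautologically), and similarly $A(U)\wedge B(P) \simeq SWF_R(P(U))$ by running the whole argument with $K = U$. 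Combining, $SWF_R(P(K)) \simeq SWF_R(K) \wedge SWF_R(P(U))$.

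I would organize the write-up as: (i) set up the surgery/cobordism decomposition of $S^3_0(P(K))$ along the satellite torus, and verify the real-structure compatibility, isolating the role of the odd winding number; (ii) state precisely the real excision theorem in the relative form needed (or cite the version proved earlier in the paper), including the normalization of the relative invariant for a solid torus so that gluing in a trivially-embedded solid torus acts as the identity up to a fixed suspension; (iii) run the three-fold comparison $P(K)$, $U$-companion case, and trivial-pattern case to cancel the auxiliary factors $A(U)$, $B(P)$, $A(K)$ against each other and extract the stated equivalence; (iv) track suspension/grading shifts and check they cancel (or absorb them into the definition of $SWF_R$ as a stable object).

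The main obstacle I expect is step (i)--(ii): making the real excision theorem apply on the nose to this torus decomposition. Excision for Floer-type invariants along tori is subtler than along spheres — one typically needs the decomposing torus to be incompressible, the pieces to satisfy a monotonicity or ``no reducibles in a window'' hypothesis, and the metric/perturbation to be stretched along the torus so that the Seiberg--Witten solutions localize to the two sides. In the real setting one must additionally arrange an involution compatible with this stretching, and verify that the moduli of real solutions on $T^2\times\mathbb{R}$ contributing to the gluing are exactly a point (the reducible), which is where the odd winding number re-enters to rule out spurious real reducibles with the wrong holonomy. Getting these analytic hypotheses to hold — and in particular proving the version of excision in a form strong enough to yield a $\mathbb{Z}_4$-equivariant \emph{stable homotopy} equivalence rather than merely an isomorphism on homology — is the technical heart of the argument, and I would expect it to occupy the bulk of the paper's Floer-theoretic sections.
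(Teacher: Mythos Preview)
Your overall intuition---decompose along the satellite torus and apply an excision theorem---is exactly right, but two concrete choices in your proposal diverge from the paper and create genuine gaps.

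First, you work with zero-surgeries $S^3_0(K)$, $S^3_0(P(K))$, whereas $SWF_R(K)$ is defined via the \emph{double branched cover} $\Sigma_2(K)$. The paper does relate $SWF_R(K)$ to $SWF_R(S^3_0(K))$, but only up to local equivalence, not stable homotopy equivalence; so your route would at best prove the local-equivalence version of the satellite formula. The paper instead decomposes the branched covers directly: since $P$ has odd winding number, $\Sigma_2(P(K))$ splits $\mathbb{Z}_2$-equivariantly along a torus as the nontrivial double cover of the companion exterior glued to the branched double cover of $(S^1\times D^2, P)$. The odd winding number enters precisely here: it forces the meridian of $K$ to link $P(K)$ oddly, so the restricted cover over the companion exterior is nontrivial.

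Second, and more seriously, your excision statement $SWF_R(Y_1\cup_{T^2} Y_2)\simeq SWF_R(Y_1,T^2)\wedge SWF_R(Y_2,T^2)$ requires relative Floer homotopy types for $3$-manifolds with torus boundary. These are not defined in the paper, and a naive smash-product formula of this kind is not expected to hold (compare with bordered Floer theory, where the gluing is via $A_\infty$-modules, not tensor products). The paper sidesteps this entirely by using the Floer/Kronheimer--Mrowka \emph{swap} form of excision: for two closed involutive $\mathbb{Z}_2$-homology spheres $Y=Y_1\cup_{T^2}Y_2$ and $Y'=Y_1'\cup_{T^2}Y_2'$, one has
\[
SWF_R(Y)\wedge SWF_R(Y')\ \simeq\ SWF_R(Y_1\cup_{T^2}Y_2')\wedge SWF_R(Y_1'\cup_{T^2}Y_2).
\]
This is proved by writing down an explicit excision cobordism and showing its real Bauer--Furuta invariant is a homotopy equivalence. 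Applying it with $Y=\Sigma_2(P(K))$ and $Y'=\Sigma_2(U)=S^3$ (decomposed using the unknot exterior and the trivial pattern) immediately gives $SWF_R(P(K))\wedge S^0 \simeq SWF_R(K)\wedge SWF_R(P(U))$. Your cancellation argument with $A(U)$, $B(P)$, etc., is effectively reinventing this swap, but the paper's formulation never needs to name the relative pieces and so avoids the undefined objects.
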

\noindent This follows from a real version of the excision theorem, given in \cref{exthm}, where the original theorem was proved by Floer in \cite{floer1990instanton}. We adopt Kronheimer and Mrowka's approach to the excision theorem from \cite{kronheimer2010knots}. In particular, the homotopy equivalences in \cref{thm:mainhomotopytype} arise as the real Bauer–Furuta invariants of the excision cobordisms used in \cite{kronheimer2010knots}.

In \cite{KMT:2021, KMT:2024}, the authors defined the concordance invariants \( \delta_R, \overline{\delta}_R, \underline{\delta}_R \), and \( \kappa_R \) from the real Floer homotopy type \( SWF_R(K) \). As an immediate consequence of \cref{thm:mainhomotopytype}, we obtain:

\begin{cor}\label{cor:mainhomotopytype}
Let $K$ be a knot in $S^3$. If $P$ is a pattern with an odd winding number, then the knots $K$ and $P(K) \# -P(U)$ share the same values for the concordance invariants:
\[
\delta_R, \quad \overline{\delta}_R, \quad \underline{\delta}_R, \quad \text{and} \quad \kappa_R. \eqno\QEDB
\] 
\end{cor}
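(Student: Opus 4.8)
The plan is to deduce the corollary directly from \cref{thm:mainhomotopytype} together with the formal properties of real Floer homotopy types and of the four invariants developed in \cite{KMT:2021, KMT:2024}. Concretely, I would use three facts: (i) a connected sum formula $SWF_R(J_1 \# J_2) \simeq SWF_R(J_1) \wedge SWF_R(J_2)$ as $\mathbb{Z}_4$-equivariant stable homotopy types; (ii) the assignment $K \mapsto [SWF_R(K)]_{\text{loc}}$ is a well-defined homomorphism from $\mathcal{C}$ to the local equivalence group $\mathcal{LE}$, so that $[SWF_R(-P(U))]_{\text{loc}} = -[SWF_R(P(U))]_{\text{loc}}$ (this is because $P(U) \# -P(U)$ is slice and slice knots have trivial local equivalence class); and (iii) each of $\delta_R, \overline{\delta}_R, \underline{\delta}_R, \kappa_R$ is a function of $[SWF_R(K)]_{\text{loc}}$ alone.

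First I would apply (i) and then \cref{thm:mainhomotopytype} to obtain $\mathbb{Z}_4$-equivariant stable homotopy equivalences
\[
SWF_R(P(K) \# -P(U)) \simeq SWF_R(P(K)) \wedge SWF_R(-P(U)) \simeq SWF_R(K) \wedge SWF_R(P(U)) \wedge SWF_R(-P(U)).
\]
Passing to local equivalence classes, where the group operation of $\mathcal{LE}$ is induced by the smash product, the right-hand side represents
\[
[SWF_R(K)]_{\text{loc}} + [SWF_R(P(U))]_{\text{loc}} + [SWF_R(-P(U))]_{\text{loc}} = [SWF_R(K)]_{\text{loc}},
\]
using the cancellation from (ii). Hence $[SWF_R(P(K) \# -P(U))]_{\text{loc}} = [SWF_R(K)]_{\text{loc}}$, and then (iii) immediately gives that $K$ and $P(K) \# -P(U)$ have the same values of $\delta_R$, $\overline{\delta}_R$, $\underline{\delta}_R$, and $\kappa_R$.

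I do not expect any real obstacle here: ingredients (i)--(iii) are already available in \cite{KMT:2021, KMT:2024}, and the only new input, \cref{thm:mainhomotopytype}, enters at a single step. The one point requiring care is simply that the connected sum formula (i) and the homomorphism property (ii) are recorded for $\mathbb{Z}_4$-equivariant stable homotopy types in precisely the form needed to compose with the equivalence of \cref{thm:mainhomotopytype}. Alternatively, one could bypass $\mathcal{LE}$ entirely and argue using the stable homotopy invariance of each of the four invariants together with the inequalities they satisfy under connected sum, applied to $SWF_R(P(U))$ and $SWF_R(-P(U))$ and the fact that these are mutually inverse; but routing the argument through the local equivalence group is the cleanest and makes the role of \cref{thm:mainhomotopytype} most transparent.
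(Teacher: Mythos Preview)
Your proposal is correct and matches the paper's own reasoning: the corollary is stated with an immediate \QEDB, being a direct consequence of \cref{thm:mainhomotopytype} together with the connected sum formula, the homomorphism $\mathcal{C}\to\mathcal{LE}$, and the fact that $\delta_R,\overline{\delta}_R,\underline{\delta}_R,\kappa_R$ factor through $\mathcal{LE}$, all from \cite{KMT:2021,KMT:2024}. Your write-up simply spells out the implicit argument.
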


Furthermore, we define a real Seiberg–Witten Floer homotopy type for homology $S^1 \times S^2$’s and show that $SWF_R(S^3_0(K))$ and $SWF_R(K)$ are locally equivalent, where $S^3_0(K)$ denotes the zero-framed surgery on a knot $K$.  Recall that many concordance invariants are known \emph{not} to be determined by the zero-framed surgery on a knot~\cite{Yasui:2015} (see also~\cite{CFHH:2013,Piccirillo:2019,HMP:2021}). Examples include the $\tau$, $\nu$, $\nu^+$, and $\varepsilon$ invariants from Heegaard Floer theory~\cite{Oz-Sz:2003, OzSz11, Hom:2014, Hom-Wu:2016-1}, as well as the $s$ invariant from Khovanov homology~\cite{Rasmussen:2010} and, more broadly, all slice-torus invariants~\cite{Lobb09, Wu09, Lo12, Le14, LS14, LL:2016,GLW19, BS21, SS22,   DISST2022, IT2024}. In contrast, the local equivalence between $SWF_R(S^3_0(K))$ and $SWF_R(K)$ implies that all concordance invariants arising from the local equivalence class of $SWF_R(K)$ are determined by the zero-framed surgery on~$K$.

\begin{thm}\label{0surgerydetermine}
    Let $K$ be a knot in $S^3$. 
    Then, the local equivalence class of $SWF_R(K)$ is determined by the orientation preserving diffeomorphism type of $S^3_0(K)$.
    In particular, the concordance invariants 
    \[
    \delta_R(K), \quad \overline{\delta}_R(K), \quad \underline{\delta}_R(K), \quad \text{and} \quad \kappa_R(K)
    \]
    depend only on the orientation preserving diffeomorphism type of $S^3_0(K)$.
\end{thm}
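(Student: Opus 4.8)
The plan is to show that the local equivalence class of $SWF_R(K)$ is an invariant of the closed $3$-manifold $S^3_0(K)$ together with its natural $\mathbb{Z}_2$-action, and then invoke the diffeomorphism-invariance of real Seiberg--Witten Floer homotopy for homology $S^1 \times S^2$'s. First I would recall the construction alluded to in the paragraph preceding the statement: to a knot $K$ one associates $S^3_0(K)$, an oriented homology $S^1 \times S^2$, and the real structure on $SWF_R(K)$ comes from the involution on the spin-$c$ configuration space induced by a fixed $\mathbb{Z}_2$-action on $S^3_0(K)$ (coming from the orientation-reversing symmetry of the knot exterior, or equivalently from the fact that the relevant spin-$c$ structure is self-conjugate). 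The key point is to prove that $SWF_R(K)$, as a stable $\mathbb{Z}_4$-homotopy type, depends only on $(S^3_0(K), \text{spin-}c\text{ structure})$ and not on the knot $K$ realizing this surgery; local equivalence of two such homotopy types then follows a fortiori.

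The key steps, in order, are as follows. (1) Show that the spin-$c$ structure $\mathfrak{s}_0$ on $S^3_0(K)$ used in the real construction, i.e.\ the one with $c_1(\mathfrak{s}_0) = 0$, is the unique self-conjugate spin-$c$ structure (this is a standard homological computation using $H^2(S^3_0(K);\mathbb{Z}) \cong \mathbb{Z}$ with no $2$-torsion), so it is canonically determined by the manifold. (2) Show that the involution entering the real structure is canonically determined: the relevant datum is a conjugation-type symmetry covering the identity on $S^3_0(K)$ acting on the spinor bundle, and since $\mathfrak{s}_0 \cong \bar{\mathfrak{s}}_0$ uniquely, such a lift is unique up to the gauge/homotopy ambiguity already quotiented out in the definition of $SWF_R$. (3) Invoke the construction of $SWF_R(Y)$ for an oriented homology $S^1 \times S^2$ $Y$ with odd involution, asserted just before the statement, and its naturality: an orientation-preserving diffeomorphism $S^3_0(K) \to S^3_0(K')$ necessarily carries $\mathfrak{s}_0$ to $\mathfrak{s}_0'$ and intertwines the (essentially unique) involutions, hence induces a stable $\mathbb{Z}_4$-equivariant homotopy equivalence $SWF_R(K) \simeq SWF_R(K')$. (4) Conclude that all invariants extracted from the local equivalence class — in particular $\delta_R, \overline{\delta}_R, \underline{\delta}_R, \kappa_R$ — agree, using that these were defined in \cite{KMT:2021, KMT:2024} purely in terms of this homotopy type.

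The main obstacle I expect is step (2)–(3): making precise that the real/odd-involution Floer homotopy type of the surgered manifold actually recovers $SWF_R(K)$ as defined from the knot, and that it is genuinely functorial under orientation-preserving diffeomorphisms. This requires verifying that the real $10/8$-type construction for homology $S^1\times S^2$'s (a) does not depend on auxiliary choices (metric, perturbation, and the finite-dimensional approximation data) beyond stable $\mathbb{Z}_4$-homotopy, which is the content of the well-definedness of $SWF_R$ for such manifolds, and (b) agrees with the knot-theoretic definition of $SWF_R(K)$ — presumably because the latter is itself defined through $S^3_0(K)$, or through a double branched cover / $\mathbb{Z}_2$-equivariant picture that the surgery description reproduces. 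One must also check that the hypothesis $b_1(Z) - b_1(Z/\mathbb{Z}_2) = 0$ type condition (needed for $SWF_R$ to be defined) is automatic here, which follows since $S^3_0(K)/\mathbb{Z}_2$ is again a rational homology $S^1 \times S^2$ when the involution is the relevant one. Once functoriality is in place, the homological uniqueness statements in steps (1) and (2) are routine, and the final implication for the numerical invariants is immediate from their definitions.
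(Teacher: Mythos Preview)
Your proposal has a genuine gap at exactly the point you flag as the ``main obstacle,'' and your speculation about how to resolve it is incorrect. The invariant $SWF_R(K)$ is \emph{not} defined through $S^3_0(K)$: it is defined using the double branched cover $\Sigma_2(K)$ of $S^3$ along $K$, together with the covering involution (see the construction recalled in Section~3). On the other hand, $SWF_R(S^3_0(K))$ is defined using the nontrivial double cover $\widetilde{S^3_0(K)}$ of the zero-surgery, with its covering involution. These are two genuinely different $3$-manifolds with involutions, so there is no tautology and no direct ``reproduction'' of one picture by the other. Your steps (1)--(2), about uniqueness of the spin$^c$ structure and the real structure on the zero-surgery side, are essentially correct (and handled in the paper by \cref{s1s2}), but they only establish that $SWF_R(S^3_0(K))$ is a well-defined diffeomorphism invariant of $S^3_0(K)$; they say nothing about why it should agree with $SWF_R(K)$.

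What the paper actually does is supply the missing bridge: the double branched cover $\widetilde{X}_0(K)$ of the zero-surgery trace, branched along the $2$-handle core, is a $\mathbb{Z}_2$-equivariant cobordism from $\Sigma_2(K)$ to $\widetilde{S^3_0(K)}$. One checks it carries a real spin structure with $\sigma(\widetilde{X}_0(K))=0$ and $b_2(\widetilde{X}_0(K)) - b_2^\tau(\widetilde{X}_0(K))=0$ (\cref{lem:prop4surgery}, \cref{uniquenessof0surgery}), so its real Bauer--Furuta invariant and that of $-\widetilde{X}_0(K)$ give local maps in both directions. This cobordism argument is the entire content of the proof, and it is absent from your outline. (Incidentally, the involution is not a ``conjugation-type symmetry covering the identity'' on $S^3_0(K)$; it is the deck transformation of the nontrivial double cover.)
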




Furthermore, since the homotopy type of $SWF_R(P(K))$ can be computed via \cref{thm:mainhomotopytype}, the associated cohomological invariants are also determined. In particular, the degree-type invariant  
\[
|\deg(K)| \in \mathbb{Z}_{\geq 0}
\]  
defined as the absolute value of the signed count of $\{\pm 1\}$-framed real Seiberg–Witten solutions on the double branched cover of a knot $K$ with respect to its unique spin structure (see \cite[Definition 4.28]{Mi23}; see also \cite{KPT:2024}), can be computed as follows:


\begin{thm}\label{degree}
    For any knot $K$ in $S^3$ and any pattern $P$ with an odd winding number, we have  
    \[
    |\deg (K)|\cdot |\deg P(U)| = |\deg (P(K))| .
    \]
\end{thm}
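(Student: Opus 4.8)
The plan is to deduce this from Theorem~\ref{thm:mainhomotopytype} by passing from stable $\Z_4$-equivariant homotopy equivalence to the behavior of the degree invariant under smash products. First I would recall that $|\deg(K)|$ is extracted from the real Bauer--Furuta/real Seiberg--Witten Floer homotopy type $SWF_R(K)$ by a cohomological operation: it is the absolute value of a signed count of $\{\pm 1\}$-framed real solutions, and this count is the image under the relevant $\Z_4$-equivariant (co)homology functor of a canonical class associated to the stable homotopy type. Concretely, $|\deg(K)|$ is computed as the multiplicity with which a certain $\Z_4$-equivariant cell (the bottom nonequivariant cell in the appropriate degree) appears in the attaching data of $SWF_R(K)$, equivalently the degree of the composite $S^{\text{top cell}} \to SWF_R(K) \to S^{\text{bottom cell}}$ after suitable (de)suspension; this is exactly the normalization used in \cite[Definition 4.28]{Mi23}.

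The key steps are: (1) verify that $|\deg(\cdot)|$ is a stable homotopy invariant of $SWF_R(\cdot)$, so that it is legitimate to feed Theorem~\ref{thm:mainhomotopytype} into it; (2) establish multiplicativity of $|\deg|$ under smash products, i.e.\ that if $X$ and $Y$ are the relevant $\Z_4$-spectra then the degree-type count for $X \wedge Y$ equals the product of those for $X$ and $Y$. Step (2) is the crux: the signed count of framed points is, after passing to (stable, equivariant) homotopy groups, the degree of a map of spheres, and the degree of a smash product of two maps of spheres is the product of the degrees (with signs); taking absolute values kills the sign ambiguity and gives the clean multiplicative statement. Then (3) apply Theorem~\ref{thm:mainhomotopytype} to identify $SWF_R(P(K))$ with $SWF_R(K) \wedge SWF_R(P(U))$ $\Z_4$-equivariantly stably, and conclude
\[
|\deg(P(K))| = |\deg(K)| \cdot |\deg(P(U))|.
\]

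I expect the main obstacle to be Step~(2): one must check that the particular normalization of $\deg$ in \cite[Definition 4.28]{Mi23} — involving the choice of $\{\pm1\}$-framing, the spin structure on the double branched cover, and the identification of the ambient representation sphere — is compatible with the smash-product decomposition coming from the excision cobordisms of \cite{kronheimer2010knots}. In particular, one needs that the double branched cover of $P(K)$ decomposes (up to the equivalences used in Theorem~\ref{thm:mainhomotopytype}) in a way matching the smash product, and that the framings and orientations multiply correctly rather than merely up to sign; the absolute value is what makes this tractable. A secondary point is ensuring the suspension shifts (the net representation-sphere twist in Theorem~\ref{thm:mainhomotopytype}) do not affect the degree count, which follows because suspension by a sphere acts by $\pm 1$ on the relevant degree. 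Once these compatibilities are in place, the formula is immediate.
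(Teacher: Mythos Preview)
Your overall strategy---deduce the formula from Theorem~\ref{thm:mainhomotopytype} together with multiplicativity of $|\deg|$ under smash---is exactly the paper's. The difference is in how Step~(2) is carried out, and your version has a gap.

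You propose to read $|\deg(K)|$ as the degree of a composite $S^{\text{top cell}} \to SWF_R(K) \to S^{\text{bottom cell}}$, and then invoke multiplicativity of mapping degrees under smash. But $SWF_R(K)$ is not in general a sphere, and for an arbitrary finite spectrum there is no canonical ``top cell $\to$ bottom cell'' degree that recovers the signed count of critical points; that count is the (reduced) Euler characteristic of the Conley index, not a mapping degree. So the equivalence you assert (``this is exactly the normalization used in \cite[Definition~4.28]{Mi23}'') is not justified, and the worries you flag about framings, suspension shifts, and orientations are symptoms of this mismatch rather than technicalities to be checked.

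The paper avoids all of this by using the identification
\[
|\deg(K)| \;=\; \bigl|\chi\bigl(\widetilde{H}^*(SWF_R(K))\bigr)\bigr|
\]
from \cite[Proposition~5.2]{KPT:2024}. Theorem~\ref{thm:mainhomotopytype} then gives, after applying reduced cohomology and K\"unneth,
\[
\widetilde{H}^*(SWF_R(P(K))) \;\cong\; \widetilde{H}^*(SWF_R(K)) \otimes \widetilde{H}^*(SWF_R(P(U))),
\]
and multiplicativity of the Euler characteristic under tensor product finishes the argument in one line. This route makes your Step~(1) automatic and replaces the delicate smash-of-maps analysis in your Step~(2) with a purely algebraic fact.
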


\noindent Note that the theorem also allows us to compute Miyazawa's $2$-knot invariants~\cite[Definition 3.4]{Mi23} for $2$-knots obtained via twisted roll spinning~\cite[Section 4]{Mi23}.


Finally, we prove that Miyazawa's $2$-knot invariants depend only on the orientation preserving diffeomorphism type of the surgery manifold. 
\begin{thm}
    Let $K$ be a smooth $2$-knot in $S^4$.  
    Then, Miyazawa's degree invariant $|\deg(K)|$ depends only on the orientation preserving diffeomorphism type of the $4$-manifold obtained by performing the surgery along $K$.  
\end{thm}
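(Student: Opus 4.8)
The plan is to match Miyazawa's degree invariant of the $2$-knot $K$ with a real Seiberg--Witten invariant of the surgery $4$-manifold $\Sigma_K:=(S^4\setminus\nu(K))\cup_{S^2\times S^1}(D^3\times S^1)$, read off from its canonical double cover. Recall that $\deg(K)$ is extracted, as a mapping degree up to sign, from the real Bauer--Furuta invariant of the closed $4$-manifold with involution $(\Sigma_2(S^4,K),\tau)$, where $\Sigma_2(S^4,K)$ is the double branched cover of $S^4$ along $K$ and $\tau$ its deck transformation. Writing $S^4=\nu(K)\cup_{S^2\times S^1}E_K$ with $\nu(K)=S^2\times D^2$ and $E_K$ the exterior, the branched cover decomposes equivariantly as
\[
\Sigma_2(S^4,K)\;=\;\widetilde{E_K}\;\cup_{S^2\times S^1}\;(S^2\times D^2),
\]
where $\widetilde{E_K}\to E_K$ is the double cover classified by $H_1(E_K)\to\Z_2$ (a free involution), the gluing is along $(S^2\times S^1,\text{free involution})$, and the cap $S^2\times D^2$ carries $(x,w)\mapsto(x,-w)$, branched along $S^2\times\{0\}=\widetilde{K}$.

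First I would develop a cutting/gluing formalism for real Bauer--Furuta invariants along a separating $(S^2\times S^1,\text{free involution})$: a relative real Bauer--Furuta invariant for each side, and a pairing recovering the closed invariant, in the spirit of the real excision theorem of \cref{exthm}. Granting this, the cap $(S^2\times D^2,(x,w)\mapsto(x,-w))$ contributes canonically --- its relative real moduli space is a single reducible with a fixed index --- so $\deg(K)$ depends only on the relative invariant of the pair $(\widetilde{E_K},\tau)$, equivalently on the double cover of the knot exterior.

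Next I would apply the same analysis to $\Sigma_K$. Its canonical double cover, classified by the unique nonzero class in $H^1(\Sigma_K;\Z_2)=\Z_2$, is $\widetilde{\Sigma_K}=\widetilde{E_K}\cup_{S^2\times S^1}(D^3\times S^1)$ with a free involution $\tau$, and one obtains from it a real Seiberg--Witten invariant $\deg_R(\Sigma_K)$ --- a real analogue of the Mrowka--Ruberman--Saveliev invariant of a homology $S^1\times S^3$ --- by the identical recipe. By the same cutting formula, the cap $(D^3\times S^1,(y,z)\mapsto(y,-z))$ also contributes canonically, so $\deg_R(\Sigma_K)$ too depends only on $(\widetilde{E_K},\tau)$. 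It then remains to see that the two caps $S^2\times D^2$ and $D^3\times S^1$ have equal relative real Bauer--Furuta invariants; here one uses that they are precisely the two complementary halves of the standard model $S^2\times D^2\cup_{S^2\times S^1}D^3\times S^1=\partial(D^3\times D^2)=S^4$, equipped with the standard involution fixing an unknotted $S^2$, whose real Bauer--Furuta invariant is the identity (equivalently, $\deg$ of the unknotted $2$-sphere equals $\pm 1$). Concretely, this amounts to constructing the equivariant cobordism $W$ from $(\Sigma_2(S^4,K),\tau)$ to $(\widetilde{\Sigma_K},\tau)$ that realizes the trace of the equivariant surgery killing the branch sphere $\widetilde{K}$, and showing its real Bauer--Furuta invariant is a homotopy equivalence.

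Once $\deg(K)=\pm\deg_R(\Sigma_K)$ is established, the conclusion is immediate: the double cover $\widetilde{\Sigma_K}$ and its involution are canonically determined by the oriented diffeomorphism type of $\Sigma_K$, so $\deg_R(\Sigma_K)$, hence $|\deg(K)|$, depends only on that type. The main obstacle is the comparison of the two cap contributions: one must set up the real excision/gluing formula along $(S^2\times S^1,\text{free involution})$ and then verify the relevant triviality (equivalently, that the elementary equivariant cobordism $W$ has invertible real Bauer--Furuta invariant). The remaining ingredients --- the structure of the branched- and surgery-cover decompositions, and the handle-trading needed to build $W$ --- are essentially formal, using the real excision theorem of \cref{exthm} and the techniques imported from \cite{kronheimer2010knots}.
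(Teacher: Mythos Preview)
Your proposal is correct and follows essentially the same route as the paper: define a degree invariant $|\deg(X)|$ for any homology $S^1\times S^3$ via the real Bauer--Furuta invariant of its canonical double cover, then use the gluing formula along $(S^1\times S^2,\text{free involution})$ (this is \cref{gluingformula}, not the $T^2$-excision of \cref{exthm}) to equate $|\deg(K)|$ with $|\deg(X(K))|$ by showing both caps contribute $\pm\id$. The only cosmetic difference is that the paper computes each cap invariant separately as $\pm\id$, whereas you pair them off inside the branched cover of the unknotted $2$-sphere; your alternative cobordism $W$ is not needed.
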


\noindent For the proof, in analogy with real invariants of smooth homology $S^1 \times S^2$ manifolds, we define an invariant  
$|\deg(X)| \in \mathbb{Z}_{\geq 0}$  
for any oriented homology $S^1 \times S^3$ manifold $X$, and show that  
\[
|\deg(X(K))| = |\deg(K)|,
\]
where $X(K)$ denotes the manifold obtained by performing surgery along a 2-knot $K$  in $S^4$.
Using the invariant $|\deg (X)|$, we also obstruct the existence of a positive scalar curvature metric on certain homology $S^1 \times S^3$ manifolds. See \cref{homologys1s3} for further discussion.

\subsection*{Acknowledgements} 
This joint work began at the conference \emph{The East Asian Conference on Gauge Theory and Related Topics II}, held in Nara. We thank the organizers for inviting all of the authors and for providing a stimulating environment. The first author is partially supported by JSPS KAKENHI Grant Number 24K22832. 
The second author is partially supported by the Samsung Science and Technology Foundation (SSTF-BA2102-02) and the NRF grant RS-2025-00542968.
The third author was partially supported by JSPS KAKENHI Grant Number 22K13921.

\subsection*{Notation and conventions}
Throughout this paper, all $3$-manifolds are assumed to be smooth, connected, closed, and oriented, and all $4$-manifolds are smooth, connected, compact, and oriented. All maps between manifolds are smooth. Given a manifold $Y$, we denote its reverse orientation by $-Y$. Homology is taken with integral coefficients unless specified otherwise. For any $\text{spin}^c$ structure $\mathfrak{s}$, we denote its conjugate $\text{spin}^c$ structure by $\overline{\mathfrak{s}}$.

\section{Real structures on zero-framed surgery trace} 
In this section, we discuss real spin$^c$ and real spin structures on the zero-framed surgery traces of knots.

\subsection{Real spin$^c$ structures} 
First, we review the definitions of real spin$^c$ and real spin structures and classifications of them. As a related reference see, \cite{Na13, Ji22, KMT:2024,  Ba25}.

Let $X$ be an oriented smooth $3$- or $4$-manifold, and let $\fraks$ be a spin$^c$ structure on $X$.  
Let $\tau \colon X \to X$ be a smooth involution that preserves the orientation of $X$ and satisfies  
\[
\tau^{\ast}\fraks \cong \bar{\fraks}.
\]  
Additionally, we fix an $\tau$-invariant metric on $X$. We identify $\fraks$ with its spinor bundle $\mathbb{S}$, equipped with the Clifford multiplication  
\[
\rho \colon \Lambda T^*X \otimes \C \to \operatorname{End}(\mathbb{S}).
\]


\begin{defn}\label{def_real_spinc} Let $n$ be either $3$ or $4$.  
Let $X$ be an $n$-manifold with a spin$^c$ structure $\mathfrak{s}$. We denote by $\mathbb{S}$ the spinor bundle associated with $\mathfrak{s}$ and by $\rho$ its Clifford multiplication. A {\it real structure} on $\mathbb{S}$ an anti-complex linear map
\[
I\colon \mathbb{S} \to \mathbb{S} 
\]
that satisfies the following conditions:
\begin{itemize}
    \item[(i)] $I$ covers $\tau$, 
    \item[(ii)] $I(\rho(\xi)\phi)=\rho(\tau^*(\xi))I(\phi)$ for all $\xi \in T_x X$ and $\phi \in \mathbb{S}_x$ 
    \item[(iii)] $I^2 =\id$, 
    \item[(iv)] $I$ preserves the hermitian metric on $\mathbb{S}$.
\end{itemize}
Two real structures $(\mathbb{S}, \rho, I )$ and $(\mathbb{S}', \rho', I' )$ are isomorphic if there is spin$^c$ isomorphism $\phi \colon \mathbb{S} \to \mathbb{S}'$ which commutes with $I$ and $I'$. 
\end{defn}


 We will see that the notion of a real spin$^c$ structure on $X$ with a free involution $\tau$ corresponds to the notion of a spin$^{c-}$ structure on $X/\tau$, which was introduced by Nakamura in \cite{Na13}.
\begin{defn}
Let $n$ be either $3$ or $4$.  
Let $M$ be an oriented Riemannian $n$-manifold, and let $l$ be a $\mathbb{Z}$-local coefficient on $M$. We call $(P, \pi)$ a spin$^{c-}$ structure on $(M, l)$ if the following conditions are satisfied:
        \begin{itemize}
            \item[(i)] Let $Fr(TM)$ be a oriented orthonormal frame bundle. Then there is a smooth map $\pi \colon P \to Fr(TM)$ such that for all $p \in P$ and $[g, u] \in \rm{Spin}(n) \times_{\Z_2} \rm{Pin}^-(2)$, we have 
            \[
            \pi\left(p\cdot [g, u]\right)=\pi(p)\pi_0(g)
            \]
            where $\pi_0$ is the projection $\rm{Spin}(n) \to \rm{SO}(n)$. 
            \item[(ii)] Let $\rho_0 \colon \rm{Pin}^-(2) \to \rm{O}(2)$ be the projection and $\rho'_0\colon \rm{Spin}^{c-}(n) \to \rm{O}(2)$ be a map $[g, u] \mapsto \rho_0(u)$. Then the associated bundle $E=P \times_{\rho'_0}\R^2$ satisfies 
            \[
                \det(E) \cong l \otimes_{\Z}\R
            \] as real line bundles. 
        \end{itemize} 
        Two spin$^{c-}$ structures $(P, \pi)$ and $(P', \pi')$ are isomorphic if there is an isomorphism $f \colon P \to P'$ of principal $\rm{Spin}^{c-}(n)$-bunldle such that $\pi=\pi'\circ f$.
\end{defn}
   A nearly complete proof of the following lemma appears in \cite[Section 2.5]{Nak15}.

\begin{lem}\label{equivalence_defn}Let $n$ be either $3$ or $4$.  
    Let $X$ be a $n$-manifold with Riemannian metric. 
    Let $\tau \colon X \to X$ be a free involution and let $l :=X \times_{\Z_2}\Z$ be a $\Z$-local system on $M:=X/\tau$. Then there is a one-to-one correspondence between the set of isomorphism class of real spin$^c$ structure on $(X, \tau)$ and the set of isomorphism class of spin$^{c-}$ structure on $(M, l)$. 
\end{lem}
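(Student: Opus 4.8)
The plan is to build the correspondence explicitly in both directions and then check the two composites are the identity on isomorphism classes. Given a real spin$^c$ structure $(\mathbb{S},\rho,I)$ on $(X,\tau)$, I would first observe that since $\tau$ is free, the spinor bundle $\mathbb{S} \to X$ together with the anti-complex-linear involution $I$ covering $\tau$ descends to data over $M = X/\tau$: the quotient $\mathbb{S}/I$ is a real rank-$2$ (complex rank-$1$) bundle over $M$, but the more structural way to phrase it is via frame bundles. Choosing a $\tau$-invariant metric (already fixed in the setup), the $\mathrm{Spin}^c(n)$-frame bundle of $(\mathbb{S},\rho)$ carries a lift of $\tau$ induced by $I$; because $I$ is anti-complex-linear and squares to the identity, this lift is an involution that acts on the $\mathrm{Spin}^c(n) = \mathrm{Spin}(n)\times_{\Z_2} U(1)$ structure group by complex conjugation on the $U(1)$ factor. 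The group generated by $\mathrm{Spin}^c(n)$ and this conjugation is precisely $\mathrm{Spin}^{c-}(n) = \mathrm{Spin}(n)\times_{\Z_2}\Pin^-(2)$, so quotienting the (lifted) frame bundle of $\mathbb{S}$ by the free $\Z_2$-action gives a principal $\mathrm{Spin}^{c-}(n)$-bundle $P \to M$. The projection $\pi\colon P \to \mathrm{Fr}(TM)$ comes from the fact that $\tau$ is orientation-preserving and isometric, so $\mathrm{Fr}(TX)$ with its $\tau$-lift descends to $\mathrm{Fr}(TM)$, and the equivariance condition (i) in the definition of $\mathrm{spin}^{c-}$ follows from the corresponding $\mathrm{Spin}^c$-equivariance of the frame bundle of $\mathbb{S}$ over $X$.

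Next I would verify condition (ii), the determinant-line identification. The associated bundle $E = P\times_{\rho_0'}\R^2$ is, by construction, the quotient by the $I$-action of the real rank-$2$ bundle underlying the determinant (characteristic) line bundle $\det\mathbb{S} \to X$; since $I$ acts anti-linearly, passing to the quotient twists the orientation line by the sign local system of the double cover $X \to M$, which is exactly $l = X\times_{\Z_2}\Z$. Concretely, $\det E \cong l\otimes_\Z \R$ because a local trivialization of $\det\mathbb{S}$ over a $\tau$-invariant chart, combined with $I$, produces transition data whose orientation part is governed by the deck transformation. For the reverse direction, given a $\mathrm{spin}^{c-}$ structure $(P,\pi)$ on $(M,l)$, I would pull $P$ back along $X \to M$: the pullback $\tilde P \to X$ acquires a $\Z_2$-action, and because $l$ pulls back to the trivial local system on $X$, the structure group reduces from $\mathrm{Spin}^{c-}(n)$ to its index-two subgroup $\mathrm{Spin}^c(n)$ over $X$, giving a genuine $\mathrm{Spin}^c$-structure; its associated spinor bundle $\mathbb{S}$ inherits from the residual $\Z_2$-action an anti-complex-linear lift $I$ of $\tau$ satisfying (i)–(iv) of \Cref{def_real_spinc}, where (ii) (Clifford-compatibility) is forced by $\pi$ intertwining the $\mathrm{Pin}^-(2)$-action with the frame bundle of $TM$, and (iii), (iv) are automatic from the group-theoretic construction.

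Finally I would check that these two constructions are mutually inverse on isomorphism classes: starting from $(\mathbb{S},\rho,I)$, forming $P$, and pulling back recovers $(\mathbb{S},\rho,I)$ up to canonical isomorphism because pullback-then-quotient by a free $\Z_2$ is the identity on descent data; the same holds in the other order. Isomorphisms on one side (spin$^c$ isomorphisms commuting with $I$) correspond bijectively to $\mathrm{Spin}^{c-}(n)$-bundle isomorphisms over $M$ commuting with $\pi$, so the bijection descends to isomorphism classes. The main obstacle I anticipate is purely bookkeeping rather than conceptual: getting the $\Z_2$-quotient of principal bundles and associated bundles to interact correctly with orientation local systems — in particular pinning down that the anti-linearity of $I$ is what converts the trivial determinant data on $X$ into the nontrivial twist by $l$ on $M$, and conversely — and making sure the dimension cases $n=3$ and $n=4$ are handled uniformly (which they are, since the construction only uses $\mathrm{Spin}(n)\to\mathrm{SO}(n)$ and never the specific low-dimensional coincidences). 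As noted in the statement, a nearly complete proof is in \cite[Section 2.5]{Nak15}, so I would cite that for the routine frame-bundle verifications and only spell out the real-structure/anti-linearity interface in detail.
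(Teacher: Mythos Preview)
Your approach is essentially the same as the paper's: translate the real spin$^c$ data on $X$ into a lift of $\tau$ on the principal $\mathrm{Spin}^c(n)$-bundle, then quotient to obtain a $\mathrm{Spin}^{c-}(n)$-bundle on $M$, citing \cite[Section 2.5]{Nak15} for the principal-bundle side of the correspondence. The paper organizes this as a two-step equivalence (real spin$^c$ $\leftrightarrow$ triple $(P,\pi,J)$ on $X$ $\leftrightarrow$ spin$^{c-}$ on $M$), with only the second step outsourced to Nakamura, but the content is the same.

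There is one point where you are imprecise in a way the paper takes care to resolve. You write that the lift of $\tau$ to the $\mathrm{Spin}^c(n)$-frame bundle ``is an involution'' because $I^2=\mathrm{id}$ on $\mathbb{S}$. In fact the lift $J$ satisfies $J^2=-1$, not $J^2=\mathrm{id}$: the element $j\in\mathrm{Pin}^-(2)$ that implements complex conjugation on $U(1)$ has $j^2=-1$, and this sign is exactly what distinguishes $\mathrm{Pin}^-(2)$ from $O(2)$. The paper pins this down by giving the explicit formula
\[
J\langle u_1,u_2,u_3,u_4\rangle = \langle I(u_2),\,-I(u_1),\,I(u_4),\,-I(u_3)\rangle
\]
on unitary frames, from which $J^2=-1$ is visible, and conversely $I([p,\phi])=[J(p),\phi j]$ going the other way. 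Your subsequent claim that the group generated is $\mathrm{Spin}^{c-}(n)$ remains correct once this is fixed, so the argument survives, but you should replace ``involution'' with ``order-$4$ lift squaring to $-1$'' and either supply these formulas or explicitly defer them to Nakamura.
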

\begin{proof}
    We prove this theorem in the case that $n=4$ since the case $n=3$ is similar.  
    Firstly, let us recall the definition of the real spin$^c$ structure on $X$ using a principal $\rm{Spin}^c (4)$-bundle, while we used a complex vector bundle with Clifford multiplication in \cref{def_real_spinc}. 
    Let $P$ be a principal $\rm{Spin}^c (4)$-bundle on $X$ and let $\pi \colon P \to Fr(X)$ be a projection to the orthonormal oriented frame bundle. The spinor bundle, which is a complex vector bundle with Clifford multiplication, is an associated vector bundle of $P$. 
    Conversely, if a rank $4$ complex vector bundle $\mathbb{S}$ with Clifford multiplication is given, then $P_x$ is given by a subset of the unitary frame $\langle u_1, \dots, u_4 \rangle$ of $\mathbb{S}_x$ such that there exist an oriented orthonormal frame $\langle e^1,\dots, e^4 \rangle$ of $T^*_xX$ and the Clifford multiplication is represented by the Gamma matrices with the basis $\langle u_1, \dots, u_4 \rangle$ (see \cite[p.2]{kronheimer2007monopoles}). 
    Therefore these two definitions are equivalent. 
    Using the principal $\rm{Spin}^c (4)$-bundle $P$ over $M$, let us prove that our definition of real spin$^c$ structure is equivalent to the triple $(P, \pi, J)$ such that $J \colon P \to P$ is a lift of $\tau$ with 
    \[
    J^2=-1 \qquad \text{ and }\qquad J(p)[g, u]=J(p[g, \bar{u}])
    \]
    where $g \in \rm{Spin(4)}$ and $u \in \rm{U}(1)$. If we have such $J$ on $P$, then the involution $I$ on $\mathbb{S}$ is given as follows. The georup $\rm{Spin}^c(4)$ is isomorphic to $(\rm{Sp}(1) \times \rm{Sp}(1) \times U(1))/\{\pm 1\}$ and let $\rho_{\pm} \colon (\rm{Sp}(1) \times \rm{Sp}(1) \times U(1))/\{\pm 1\} \to \rm{U}(2)$ be a representation given by
    \[
        \rho_{\pm}([(g_+, g_-, u)])\phi=g_{\pm}\phi u^{-1}
    \]
    where $\phi \in \H \cong \C \oplus j\C \cong \C^2$, $g_{\pm} \in \rm{Sp}(1)$ and $u \in \rm{U}(1)$. Using these representations, the spinor bundle is given by the associated bundles $P \times_{\rho_{\pm}} \H=\mathbb{S}^{\pm}$ and $\mathbb{S}=\mathbb{S}^+ \oplus \mathbb{S}^-$. 
    Then we define $I$ by $I([p, \phi])=[J(p), \phi j]$ . One can check that this $I$ satisfies the priperties (i)$\sim$(iv) in \cref{{def_real_spinc}}. 
    Let us assume that a triple $(\mathbb{S}, \rho,I)$ in \cref{def_real_spinc} is given. Then $J$ is given by 
    \[
        J\langle u_1, u_2, u_3, u_4 \rangle=
        \langle I(u_2), -I(u_1), I(u_4), -I(u_3) \rangle. 
    \]
    One can check that the Clifford multiplication of $\tau^*e^1, \ldots,\tau^*e^4$ is given by the Gamma matrices with basis \[
    \langle I(u_2), -I(u_2), I(u_4), -I(u_3) \rangle.
    \]
    In \cite[Section 2.5]{Nak15} the following one-to-one correspondence are proved: the set of isomorphism class of spin$^{c-}$ structure on $(M, \tau)$ corresponds to the set of the isomorphism class of $(P, \pi, J)$. This completes the proof. 
\end{proof}

\begin{rem}
    Although the proof of \cref{equivalence_defn} can be extended to any dimension, we focus on the 3- and 4-dimensional cases in this paper.  
\end{rem}

We define real spin structure as follows: 
\begin{defn} 
Let $n$ be either $3$ or $4$.  
Let $X$ be an $n$-manifold with a spin structure $\mathfrak{s}$. We denote by $\mathbb{S}$ the spinor bundle associated with $\mathfrak{s}$ and by $\rho$ its Clifford multiplication.  
A {\it real structure} on $\mathbb{S}$ is an anti-complex linear map  
\[
I\colon \mathbb{S} \to \mathbb{S}
\]
that satisfies the following conditions:
\begin{itemize}
    \item[(i)] $I$ covers $\tau$, 
    \item [(ii)]$I(\rho(\xi)\phi)=\rho(\tau^*(\xi))I(\phi)$ for all $\xi \in T_x X$ and $\phi \in \mathbb{S}_x$ 
    \item[(iii)] $I^2 =\id$, 
    \item[(iv)] $I$ preserves the hermitian metric on $\mathbb{S}$, 
    \item[(v)] $I$ commute with $j \in \H$.
\end{itemize}
Two real structures $(\mathbb{S}, \rho, I )$ and $(\mathbb{S}', \rho', I' )$ are isomorphic if there is spin isomorphism $\phi \colon \mathbb{S} \to \mathbb{S}'$ which commutes with $I$ and $I'$. 
\end{defn}

There is a principal bundle formulation of real spin structure. 

\begin{defn}
Let $n$ be either $3$ or $4$.  
Let $M$ be an oriented Riemannian $n$-manifold, and let $l$ be a $\mathbb{Z}$-local coefficient on $M$. We call $(P, \pi)$ a spin$^{-}$ structure on $(M, l)$ if the following conditions are satisfied:
        \begin{itemize}
            \item[(i)] Let $Fr(TM)$ be a oriented orthonormal frame bundle. Then there is a smooth map $\pi \colon P \to Fr(TM)$ such that for all $p \in P$ and $[g, u] \in \rm{Spin}(n) \times_{\Z_2} \langle j \rangle $, we have 
            \[
            \pi(p\cdot [g, u])=\pi(p)\pi_0(g)
            \]
            where $\pi_0$ is the projection $\rm{Spin}(n) \to \rm{SO}(n)$. 
            \item[(ii)] Let $\rho_0 \colon \Z_4 \to  \Z_2$ be the projection and $\rho'_0\colon \rm{Spin}^{-}(n) \to \rm{O}(1)$ be a map $[g, u] \mapsto \rho_0(u)$. Then the associated bundle $E=P \times_{\rho'_0}\R$ satisfies 
            \[
                E \cong l \otimes_{\Z}\R
            \] as real line bundles. 
        \end{itemize} 
        Two spin$^{-}$ structures $(P, \pi)$ and $(P', \pi')$ are isomorphic if there is an isomorphism $f \colon P \to P'$ of principal $\rm{Spin}^{-}(n)$-bunldle such that $\pi=\pi'\circ f$.
\end{defn}

The following Lemma is proved by the similar way of the proof of  \cref{equivalence_defn}. 
\begin{lem}\label{equivalence_defn_spin}Let $n$ be either $3$ or $4$.  
    Let $X$ be a $n$-manifold with Riemannian metric. 
    Let $\tau \colon X \to X$ be a free involution and let $l :=X \times_{\Z_2}\Z$ be a $\Z$-local system on $M:=X/\tau$. Then there is a one-to-one correspondence between the set of isomorphism class of real spin structure on $(X, \tau)$ and the set of isomorphism class of spin$^{-}$ structure on $(M, l)$. \qed
\end{lem}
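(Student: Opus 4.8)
The plan is to mirror the proof of \cref{equivalence_defn}, replacing the ``twisting group'' $U(1)$ by the finite cyclic group $\langle j\rangle\cong\Z_4\subset\mathrm{Pin}^-(2)$ throughout. As there, I would carry out the argument for $n=4$ and remark that $n=3$ is identical. There will be three steps: a principal-bundle reformulation of real spin structures; the passage from the resulting data on $X$ to a spin$^-$ structure on $M=X/\tau$; and the verification that the two constructions are mutually inverse on isomorphism classes.

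First I would reformulate real spin structures in principal-bundle terms. Fix the spin structure $\fraks$ and let $P_0\to X$ be the associated principal $\mathrm{Spin}(4)$-bundle, with projection $\pi_0\colon P_0\to\mathrm{Fr}(TX)$. Using $\mathrm{Spin}(4)\cong(\mathrm{Sp}(1)\times\mathrm{Sp}(1))/\{\pm1\}$ and the quaternionic spinor representations $\rho_\pm([(g_+,g_-)])\phi=g_\pm\phi$ on $\H\cong\C^2$ (so that $\mathbb{S}^\pm=P_0\times_{\rho_\pm}\H$ and $j\in\H$ acts by right multiplication), I would show — exactly as in the proof of \cref{equivalence_defn} — that a real structure $I$ on $\mathbb{S}$ is equivalent to the datum of a smooth lift $J\colon P_0\to P_0$ of $\tau$ with $J^2(p)=p\cdot(-1)$ and $J(p\cdot g)=J(p)\cdot g$ for $g\in\mathrm{Spin}(4)$, the correspondence in both directions being given by $I([p,\phi])=[J(p),\phi j]$. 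The Clifford compatibility condition translates, as in the spin$^c$ case, into a statement about how $J$ permutes unitary frames adapted to the Gamma matrices, and condition (v) — that $I$ commutes with right multiplication by $j$ — is automatic from this formula. Isomorphisms of real spin structures then correspond precisely to $\mathrm{Spin}(4)$-bundle isomorphisms intertwining the lifts.

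Second I would pass from a triple $(P_0,\pi_0,J)$ on $X$ to a spin$^-$ structure on $(M,l)$. Setting $\mathrm{Spin}^-(4)=\mathrm{Spin}(4)\times_{\Z_2}\langle j\rangle$, I would form the principal $\mathrm{Spin}^-(4)$-bundle $\widetilde Q=(P_0\times\langle j\rangle)/\{\pm1\}$ over $X$ and equip it with $\widetilde J([p,u]):=[J(p),u^{-1}]$, which one checks is an honest $\tau$-covering involution that commutes with the right $\mathrm{Spin}^-(4)$-action through the conjugation $[g,u]\mapsto[g,u^{-1}]$. Hence $\widetilde J$ descends to a principal $\mathrm{Spin}^-(4)$-bundle $Q=\widetilde Q/\widetilde J$ on $M$, whose induced map to $\mathrm{Fr}(TM)$ makes it a spin$^-$ structure; condition (ii) in the definition of spin$^-$ structure, namely $Q\times_{\rho_0'}\R\cong l\otimes_\Z\R$, holds because $l=X\times_{\Z_2}\Z$ is exactly the line bundle through which $\widetilde J$ twists. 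Conversely, pulling a spin$^-$ structure on $(M,l)$ back along $X\to M$ and using the canonical trivialization of $l$ upstairs recovers $(P_0,\pi_0,J)$. I would then check these operations are mutually inverse on isomorphism classes, which is the same bookkeeping Nakamura performs for spin$^{c-}$ in \cite[Section 2.5]{Nak15}.

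The step I expect to demand the most care is the last part of Step 1: verifying that condition (v), commutation with $j$, is precisely what reduces the structure group of the relevant bundle from $\mathrm{Pin}^-(2)$ to the finite subgroup $\langle j\rangle\cong\Z_4$ — equivalently, that the real spin$^c$ datum underlying a real spin structure carries a compatible quaternionic structure exactly when the associated lift $J$ lands in the $\Z_4$-reduction. This is the one place where the spin case genuinely diverges from the spin$^c$ case of \cref{equivalence_defn}; once it is settled, the remainder is a line-by-line transcription of that proof, with $U(1)$ everywhere replaced by $\langle j\rangle$.
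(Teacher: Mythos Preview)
Your overall strategy is exactly what the paper intends: the authors' own proof is the single sentence ``proved by the similar way of the proof of \cref{equivalence_defn}'', so mirroring that argument with $U(1)$ replaced by $\langle j\rangle\cong\Z_4$ is precisely the right idea, and your Step~1 (the principal-bundle reformulation of condition~(v)) is correct.

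There is, however, a genuine error in Step~2. Your proposed lift $\widetilde J([p,u])=[J(p),u^{-1}]$ on $\widetilde Q=(P_0\times\langle j\rangle)/\{\pm1\}$ is \emph{not} an involution: one computes
\[
\widetilde J^{\,2}([p,u])=[J^2(p),(u^{-1})^{-1}]=[p\cdot(-1),\,u]=[p,-u]\neq[p,u],
\]
so $\widetilde J$ has order~$4$, not~$2$. Quotienting by $\langle\widetilde J\rangle\cong\Z_4$ then kills the central $\{\pm1\}\subset\mathrm{Spin}^-(4)$ and produces a principal $\mathrm{Spin}^-(4)/\{\pm1\}$-bundle over $M$, not a $\mathrm{Spin}^-(4)$-bundle. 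The fix is to use instead
\[
\widetilde J([p,u]):=[J(p),\,uj],
\]
for which $\widetilde J^{\,2}([p,u])=[J^2(p),uj^2]=[p\cdot(-1),-u]=[p,u]$ and $\widetilde J$ is honestly $\mathrm{Spin}^-(4)$-equivariant (no twist by $u\mapsto u^{-1}$ is needed, since $\langle j\rangle$ is abelian). Equivalently and more directly: observe that $P_0$ itself, regarded as a space over $M=X/\tau$, is already a principal $\mathrm{Spin}^-(4)$-bundle once one lets $[1,j]\in\mathrm{Spin}^-(4)$ act by $J$; this is the inverse to the pullback construction and makes the bijection transparent. With this correction your argument goes through unchanged.
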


\subsubsection{Dimension four}
The following lemma describes the existence and the classification of real spin$^c$ or real spin structures on involutive 4-manifolds. 

\begin{lem}\label{lem:spinc4manifold}
Suppose $X$ is an oriented $4$-manifold possibly with boundary equipped with a smooth involution  
\[
\tau \colon X \to X.
\]
    Regarding the existence, we have  
    \begin{itemize}
    \item If $\tau$ has a non-empty fixed point set of codimension $2$, the condition $$\tau^* \mathfrak{s} \cong \overline{\mathfrak{s}} \qquad  \text{ and } \qquad H^{1}(X, \Z)^{-\tau^*}=0$$ is sufficient to ensure the existence of a real structure on $\mathfrak{s}$.
    \item If $\tau$ is free, then $X$ is a regular double cover of $X/\tau$. In this case, the existence of an $O(2)$-bundle $E$ over $X/\tau$ such that  
    \begin{align*}
   w_2(X/\tau) + w_2(E)+ w_1(E)^2 = 0 \qquad \text{ and }\qquad 
    w_1(E) = w_1(l)
    \end{align*}
    is sufficient to ensure the existence of a real spin$^c$ structure on $X$, where $l$ is the local system with fiber $\mathbb{Z}$ induced by the double cover. 
\end{itemize}
For the classification, if we fix a spin$^c$ structure $\mathfrak{s}$ on $X$, there is a one-to-one correspondence between isomorphism classes of real structures and  
\[
H^2(X)^{-\tau^*} \oplus \frac{H^1(X)^{\tau^*}}{\operatorname{im}(1+\tau^* \colon H^1(X) \to H^1(X))}
\]
in the non-free case. In the free case, the classification is given by  
\[
H^2(X/\tau; l).
\]
\end{lem}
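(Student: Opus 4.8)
The plan is to treat the free and non-free cases separately; the free case reduces to standard obstruction theory for $\mathrm{Spin}^{c-}$-structures via the dictionary of \cref{equivalence_defn}, while the non-free case is handled directly on $X$ by $\Z_2$-equivariant gauge theory.

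\textbf{Free case.} By \cref{equivalence_defn} (and \cref{equivalence_defn_spin} for real spin structures) this reduces to the existence and classification of $\mathrm{Spin}^{c-}$-structures (resp.\ $\mathrm{Spin}^-$-structures) on $(M,l)$, $M=X/\tau$. Unwinding the definition, a $\mathrm{Spin}^{c-}$-structure is an $O(2)$-bundle $E\to M$ with $w_1(E)=w_1(l)$ (this encodes $\det E\cong l\otimes_{\Z}\R$) together with a lift of the $SO(n)\times O(2)$-bundle $Fr(TM)\times_M Fr(E)$ along the double cover $\mathrm{Spin}^{c-}(n)=\mathrm{Spin}(n)\times_{\Z_2}\mathrm{Pin}^-(2)\to SO(n)\times O(2)$. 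Because this central $\Z_2$-extension is the Baer sum of the pullbacks of $\mathrm{Spin}(n)\to SO(n)$ and $\mathrm{Pin}^-(2)\to O(2)$, the obstruction to the lift is $w_2(TM)+(w_2(E)+w_1(E)^2)$, with $w_2(E)+w_1(E)^2$ the usual $\mathrm{Pin}^-$-obstruction for $E$; so a real spin$^c$ structure exists iff such an $E$ exists, which is the displayed condition. For the classification: exactly as ordinary spin$^c$ structures form a torsor over $H^2(M;\Z)$ (tensoring the spinor bundle by a Hermitian line bundle), $\mathrm{Spin}^{c-}$-structures form a torsor over the group of $l$-twisted Hermitian line bundles on $M$, which by the twisted exponential sequence (and softness of the flat real line bundle attached to $l$) is $H^2(M;l)=H^2(X/\tau;l)$. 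The real-spin case runs identically with $(\mathrm{Pin}^-(2),O(2),U(1))$ replaced by $(\langle j\rangle,O(1),\Z_2)$.

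\textbf{Non-free case.} Now $X^\tau$ is a surface, so we argue directly on $X$. For \emph{existence}: $\tau^*\mathfrak{s}\cong\overline{\mathfrak{s}}$ is exactly the assertion that there is an anti-complex-linear, Clifford-compatible (in the sense of \cref{def_real_spinc}(ii)), metric-preserving lift $\widetilde I\colon\mathbb{S}\to\mathbb{S}$ of $\tau$; the point is to arrange $\widetilde I^2=\id$. Now $\widetilde I^2$ is multiplication by some $h\colon X\to U(1)$, and $\widetilde I\,\widetilde I^2=\widetilde I^2\,\widetilde I$ forces $\tau^*h=\bar h$ (so $h\equiv\pm1$ on $X^\tau$). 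Replacing $\widetilde I$ by $g\widetilde I$ multiplies $\widetilde I^2$ by $g\cdot\overline{\tau^*g}$, so a genuine real structure exists iff $h^{-1}$ is a ``norm'' $g\cdot\overline{\tau^*g}$ with $g\colon X\to U(1)$. Any $k$ with $\tau^*k=\bar k$ has homotopy class in $H^1(X;\Z)^{-\tau^*}=0$, hence $k=e^{2\pi i f}$ with $\tau^*f=-f$ after adjusting $f$ by a half-integer, whence $k=g\cdot\overline{\tau^*g}$ for $g=e^{\pi i f}$; applying this to $k=h^{-1}$ produces the real structure. For the \emph{classification}: with the underlying data fixed, the gauge transformation $I'\circ I$ comparing two real structures is $\tau$-invariant and two such differ by a norm $\phi\cdot\tau^*\phi$, so the real structures on $\mathfrak{s}$ are classified by the Tate group $\widehat H^0(\Z_2;\Map(X,U(1)))$; unwinding it via $0\to\Map(X,U(1))_0\to\Map(X,U(1))\to H^1(X;\Z)\to0$ and $0\to\Z\to\Map(X,\R)\to\Map(X,U(1))_0\to0$ — using that $\Map(X,\R)$ is cohomologically trivial for $\Z_2$ and that the existence of a fixed point kills the connecting $\Z/2$ — gives $H^1(X)^{\tau^*}/\operatorname{im}(1+\tau^*)$, and combining with the freedom in the underlying $\mathfrak{s}$ (those with $\tau^*\mathfrak{s}\cong\overline{\mathfrak{s}}$ form a torsor over $\ker(1+\tau^*\colon H^2(X)\to H^2(X))=H^2(X)^{-\tau^*}$) yields the stated $H^2(X)^{-\tau^*}\oplus H^1(X)^{\tau^*}/\operatorname{im}(1+\tau^*)$. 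These computations are carried out, in closely related settings, in \cite{Na13,Nak15}.

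\textbf{Main obstacle.} The non-free case is the substantive one. One has to set up the equivariant obstruction/gauge theory so that the codimension-$2$ fixed surface $X^\tau$ is treated correctly — there a real structure restricts to a genuine real structure on $\mathbb{S}|_{X^\tau}$ matching the $\pm1$-eigenspace splitting of $\tau^*$ on $T^*X|_{X^\tau}$ (commuting with the tangential, anti-commuting with the conormal Clifford directions) — verify that $H^1(X;\Z)^{-\tau^*}=0$ is precisely what trivializes the secondary existence obstruction, and check that the presence of $X^\tau\ne\varnothing$ forces the classification to split as the clean $(\pm\tau^*)$-eigenspace sum rather than an extension of those two pieces. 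Tracking the sign twist $\Z(1)$ in the relevant equivariant cohomology, and keeping $\mathfrak{s}$ and its determinant line $\det\mathbb{S}$ distinct, is the delicate bookkeeping; the free case is routine given \cref{equivalence_defn} and \cref{equivalence_defn_spin}.
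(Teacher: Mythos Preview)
Your proposal is correct and follows the same route as the paper, whose own proof is purely by citation to \cite{KMT:2024}, \cite{Na13}, \cite{Ji22}, and \cite{Nak15} together with the reduction via \cref{equivalence_defn} in the free case; you have essentially sketched the obstruction-theoretic and Tate-cohomology arguments contained in those references. The one point to watch is that in the non-free existence step your ``adjust $f$ by a half-integer'' tacitly assumes $h\equiv +1$ on $X^\tau$ (since any norm $g\cdot\overline{\tau^*g}$ equals $|g|^2=1$ at a fixed point), and it is precisely the codimension-$2$ hypothesis, via a local-model computation carried out in the cited references, that pins down this sign.
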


\begin{proof}
For the existence, the first claim is proved in \cite[Lemma 2.9]{KMT:2024}, and the second claim is proved in \cite[Proposition 3.4]{Na13}.  
For the classification, see \cite[Section 3.1]{Ji22} and \cite[Proposition 2.3]{Nak15}.   Here, we use the fact that, in the free case, the existence and classification results for real spin$^c$ structures on $X$ coincide with those for spin$^{c-}$ structures on $X/\tau$, by \cref{equivalence_defn}.
\end{proof}






\begin{lem}\label{classification:real_spin}
Suppose $X$ is an oriented $4$-manifold possibly with boundary equipped with a smooth involution  
\[
\tau \colon X \to X.
\] 
Suppose $\mathfrak{s} $ is a spin structure on $X$.
    Regarding the existence, we have  
    \begin{itemize}
    \item[(i)] If $\tau$ has a non-empty fixed point set of codimension $2$, then  the condition
    \[
    \tau^* \mathfrak{s} \cong \mathfrak{s}
    \]  
    is a sufficient condition for $\mathfrak{s}$ to admit a real spin structure. In this case, $\tau$ is called an \emph{odd involution}.
    \item[(ii)] If $\tau$ is free, then $X$ is a regular double cover of $X/\tau$. In this case, the condition  
    \[
    w_2(X/\tau) + w_1(l)^2 = 0
    \]  
    is a sufficient condition for $X$ to admit a real spin structure. Here, $l$ is the real line bundle  
    \[
    l = X \times_{\mathbb{Z}_2} \mathbb{R} \to X/\tau.
    \]  
    In this case, $X/\tau$ also admits a real spin structure.
\end{itemize}
For the classification, if a real spin structure exists, its classification is given by the isomorphism classes of real line bundles with an order $2$ lift of $\tau$ and a choice of the sign of $I$.  
In particular, if we fix a spin structure $\mathfrak{s}$ on $X$, there are exactly two choices of real spin structures on it.

\end{lem}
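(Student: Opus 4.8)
The plan is to run the argument in parallel with the real $\mathrm{spin}^c$ case treated in \cref{lem:spinc4manifold}, carrying along the one extra piece of data that distinguishes a real spin structure, namely the quaternionic structure $j \in \mathbb{H}$ on the spinor bundle. Concretely, a real spin structure on $(X,\tau,\mathfrak{s})$ is exactly a real $\mathrm{spin}^c$ structure on the $\mathrm{spin}^c$ structure underlying $\mathfrak{s}$ whose real structure $I$ additionally commutes with $j$ (condition (v)); so after recalling the $\mathrm{spin}^c$ machinery I would only need to check that this condition can be met without new cost, and keep track of how it interacts with the existence and classification statements. Throughout I may and do assume $X$ is connected.

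For the existence in the non-free case (i), the hypothesis $\tau^*\mathfrak{s}\cong\mathfrak{s}$, \emph{as spin structures}, furnishes a spin-bundle isomorphism $\phi\colon\mathbb{S}\to\mathbb{S}$ covering $\tau$: complex linear, commuting with $j$, and intertwining Clifford multiplication with $\tau^*$. Setting $I := \phi\circ j$ produces an anti-complex linear map covering $\tau$, and conditions (ii), (iv), (v) are then immediate from the corresponding properties of $\phi$ and $j$; the only condition with global content is (iii), which for $I = \phi\circ j$ becomes a statement about the square of $\phi$, i.e.\ about whether $\tau$ is an \emph{odd} involution. This is where the fixed-locus hypothesis is used: since $\mathrm{Fix}(\tau)$ is nonempty of codimension $2$, near it $\tau$ acts as $-\mathrm{id}$ on the normal $2$-plane bundle, and the two lifts of $-\mathrm{id} \in SO(2)$ to $\mathrm{Spin}(2) = U(1)$ are multiplication by $\pm i$; this pins down the relevant sign of $\phi^2$ over a neighborhood of $\mathrm{Fix}(\tau)$, and since $\phi^2$ is a spin gauge transformation over $\mathrm{id}_X$, hence locally constant, connectedness propagates it over all of $X$. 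In contrast to the $\mathrm{spin}^c$ case there is no determinant-line gauge freedom to correct, so no vanishing hypothesis on $H^1(X;\Z)^{-\tau^*}$ is required; modulo the $j$-bookkeeping this is essentially \cite[Lemma 2.9]{KMT:2024}.

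For the existence in the free case (ii), I would pass through \cref{equivalence_defn_spin} to replace real spin structures on $(X,\tau)$ by $\mathrm{spin}^{-}$ structures on $(M,l)$, where $M = X/\tau$ and $l = X\times_{\Z_2}\R$. A $\mathrm{spin}^{-}$ structure is a lift of the classifying map of the pair $(TM,l)$ along $B\mathrm{Spin}^{-}(n) \to BSO(n)\times B\Z_2$, a $\Z_2$-central extension whose single obstruction lies in $H^2(M;\Z_2)$; the obstruction-theory computation is the one Nakamura carries out for $\mathrm{spin}^{c-}$ structures in \cite{Na13, Nak15}, and specializing the auxiliary $O(2)$-bundle there to the line bundle $l$ (so that $w_2(E) = 0$ and $w_1(E) = w_1(l)$) identifies it with $w_2(X/\tau) + w_1(l)^2$. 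Vanishing of this class yields the $\mathrm{spin}^{-}$ structure, hence the real spin structure on $X$, and the resulting $\mathrm{spin}^{-}$ structure on $(M,l)$ is by definition a real spin structure on $X/\tau$. (On $X$ the class pulls back to $w_2(X) = 0$, consistent with $\mathfrak{s}$ being spin, but the honest obstruction lives downstairs on $M$.)

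For the classification, fix the spinor bundle $\mathbb{S}$ of $\mathfrak{s}$ and suppose $I, I'$ are two real structures in the sense of the definition. Then $I'\circ I$ is complex linear (a composite of two anti-linear maps), covers $\tau^2 = \mathrm{id}$, commutes with $j$, and intertwines Clifford multiplication trivially, hence is an automorphism of the spin structure over $\mathrm{id}_X$; on a connected manifold this group is $\{\pm\mathrm{id}\}$, so $I' = \pm I$ and there are exactly two real spin structures refining $\mathfrak{s}$. Allowing $\mathfrak{s}$ itself to vary, one may additionally twist by a real line bundle over $X$ equipped with an order-$2$ lift of $\tau$, which recovers the description in the statement; the parallel $\mathrm{spin}^{c-}$ statements are in \cite[Section 3.1]{Ji22} and \cite[Proposition 2.3]{Nak15}. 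The step I expect to be the main obstacle is precisely the sign issue in (i): showing that $\tau$-invariance of a genuine spin structure forces oddness of $\tau$ as soon as the fixed set is nonempty of codimension $2$, and thereby justifying that the $H^1(X;\Z)^{-\tau^*}$ hypothesis needed in the $\mathrm{spin}^c$ setting can be dropped here. By comparison the free case is a clean reduction to Nakamura's $\mathrm{spin}^{-}$ obstruction via \cref{equivalence_defn_spin}, and the classification step is formal.
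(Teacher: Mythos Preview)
Your proposal is correct and follows essentially the same approach as the paper: existence in case (i) via the odd-involution argument (which the paper outsources to \cite[Lemma 4.2]{Ka22}), existence in case (ii) via \cref{equivalence_defn_spin} and Nakamura's obstruction computation, and classification by analyzing how two real structures on the same $\mathfrak{s}$ differ. The only cosmetic difference is that for the classification the paper argues in the other direction---it first sets up the bijection with line bundles carrying an order-$2$ lift (by explicitly realizing $l$ as the bundle of $\mathbb{H}$-linear Clifford-intertwining maps $\mathbb{S}\to\mathbb{S}'$ and exhibiting the lift as $f\mapsto I\circ f\circ I'$), and then reads off the ``exactly two'' statement for fixed $\mathfrak{s}$---whereas you prove $I'=\pm I$ directly by observing that $I'\circ I$ is a spin automorphism over $\mathrm{id}_X$, hence $\pm\mathrm{id}$; both routes are equivalent.
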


\begin{rem}    The two choices of the real spin structure are given as $I$ and $-I$. A choice of such data does not change real Seiberg--Witten theory since multiplication by $i \in U(1)$ gives a bijective correspondence between these configuration spaces. Therefore, the real Floer homotopy types or the real Bauer--Furuta invariants do not depend on these choices essentially. We do not care these choices in this paper. 

\end{rem}

\begin{proof}
   Note that in the case of a free involution, we can identify real spin structures on $X$ with spin$^{c-}$ structures whose structure group reduces to  
\[
\mathrm{Spin}(n) \times_{\pm 1} \{\pm 1, \pm j\} \subset \mathrm{Spin}(n) \times_{\pm 1} \mathrm{Pin}^-(2)
\]
from \cref{equivalence_defn_spin}. 

For the existence of real spin structures, the item (i) is proved in \cite[Lemma 4.2]{Ka22}, and the item (ii) is proved in \cite[Remark 3.5]{Na13}. Next, we classify real spin structures. Recall that the classification of spin structures is given by real line bundles when a spin structure $\mathfrak{s}$ is fixed. The correspondence is given by tensoring a line bundle $l$ with the spinor bundle $\mathbb{S}$ of $\mathfrak{s}$, where the $j$-action is given by $j \otimes \operatorname{id}_l$.   If there exists an order-two lift $\widetilde{\tau}$ of $\tau$ on $l$, then  
\[
I' = \pm I \otimes \widetilde{\tau}
\]  
defines a real spin structure. Now, assume that $\mathfrak{s}' = \mathfrak{s} \otimes l$ admits a real spin structure $I'$. Let $\mathbb{S}'$ be a spinor bundle for $\mathfrak{s}'$. We now consider the line bundle
\[
\left\{ f \colon \mathbb{S} \to \mathbb{S}' \,\middle|\, 
\begin{array}{l}
f \text{ is a } \mathbb{C}\text{-linear bundle map}, \\
f \circ \rho(\xi) = \rho'(\xi) \circ f \quad \text{for all } \xi \in T^*X, \\
f \circ j = j \circ f
\end{array}
\right\},
\]
which is actually isomorphic to $l$. Also, this line bundle $l$ admits a lift of $\tau$ defined by  
\[
f \mapsto I \circ f \circ I',
\]
which has order two. This completes the proof. 
\end{proof}

\subsubsection{Dimension three}
Next, we discuss real spin$^c$ and spin structures on 3-manifolds.  
Note that if we have a real spin or spin$^c$ structure $(\mathbb{S}, I)$ on a 3-manifold $Y$ with an involution $\tau$, we can construct a real spin or spin$^c$ structure on $[0,1] \times Y$ by setting  
\[
\mathbb{S}^+ = \pi^* \mathbb{S} \qquad \text{and} \qquad \mathbb{S}^- = \pi^* \mathbb{S},
\]
with the natural Clifford multiplication and an involution given by the pullback of $I$.
 For the formula of Clifford multiplication, see \cite[page 89]{kronheimer2007monopoles}.   On the other hand, if we are given a real spin or spin$^c$ structure $(\mathbb{S}, I)$ on $[0,1] \times Y$, the Clifford multiplication provides an identification  
\[
\rho(dt) \colon \mathbb{S}^+ \to \mathbb{S}^-.
\]
Moreover, $\mathbb{S} := \mathbb{S}^+|_{\{1\} \times Y}$ is equipped with the induced Clifford multiplication.   If we assume that the involution $\tau \colon [0,1] \times Y \to [0,1] \times Y$ preserves the time direction, then we obtain a naturally induced real spin$^c$ or spin structure  
\[
I \colon \mathbb{S} \to \mathbb{S}.
\]
This ensures the following:

\begin{lem}\label{lem:3manifoldtimesI}
    Let $Y$ be an oriented 3-manifold with a smooth involution $\tau$.
    Then the following holds:  
    \begin{itemize}
        \item The set of isomorphism classes of real spin structures on $[0,1] \times Y$ bijectively corresponds to that on~$Y$.  
        \item The set of isomorphism classes of real spin$^c$ structures on $[0,1] \times Y$ bijectively corresponds to that on~$Y$.  
    \end{itemize}
\end{lem}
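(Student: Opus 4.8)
The plan is to exhibit the bijection as a pair of mutually inverse maps $\Phi,\Psi$ coming from the two constructions already described just before the statement, show that one composite is the identity outright, and show the other is the identity up to canonical isomorphism by a homotopy–invariance argument. Throughout I give $[0,1]\times Y$ a product metric and the involution $\mathrm{id}_{[0,1]}\times\tau$ (orientation preserving, since $\tau$ is by the standing assumptions), and write $\pi\colon[0,1]\times Y\to Y$ for the projection.

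First I would define $\Phi$ on isomorphism classes by sending a real spin (resp.\ real spin$^c$) structure $(\mathbb{S},\rho,I)$ on $Y$ to $(\pi^*\mathbb{S}\oplus\pi^*\mathbb{S},\rho_4,\pi^*I)$, where $\rho_4$ is the Clifford multiplication on $\pi^*\mathbb{S}\oplus\pi^*\mathbb{S}$ determined by $\rho$ via the standard formula recalled in the text (cf.\ \cite[page 89]{kronheimer2007monopoles}) and $\pi^*I$ is the evident anti-linear lift of $\mathrm{id}\times\tau$; in the spin case condition (v) is inherited because $j$ acts diagonally on $\pi^*\mathbb{S}\oplus\pi^*\mathbb{S}$ and commutes with the off-diagonal operator $\rho_4(dt)$ and with $\rho_4(\xi)$ for $\xi\in T^*Y$. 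I would define $\Psi$ in the other direction by restricting $(\mathbb{S},\rho,I)$ on $[0,1]\times Y$ to $\{1\}\times Y\cong Y$, using $\rho(dt)\colon\mathbb{S}^+\to\mathbb{S}^-$ to transport the $4$-dimensional Clifford multiplication to the $3$-dimensional one on $\mathbb{S}^+|_{\{1\}\times Y}$, and restricting $I$; this is legitimate because $\mathrm{id}\times\tau$ preserves $\{1\}\times Y$ and, $\tau$ being orientation preserving, $I$ preserves the chirality splitting $\mathbb{S}=\mathbb{S}^+\oplus\mathbb{S}^-$. Both assignments visibly send isomorphisms to isomorphisms, hence descend to isomorphism classes, and $\Psi\circ\Phi=\mathrm{id}$ is immediate from the formulas.

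The substantive point is $\Phi\circ\Psi=\mathrm{id}$: every real spin or spin$^c$ structure on $[0,1]\times Y$ is isomorphic, as such, to the pullback of its restriction to $\{1\}\times Y$. I would prove this either directly — choose a $\tau$-invariant spin$^c$ connection on $\mathbb{S}$ (average one so that it commutes with $I$) and parallel transport along the segments $t\mapsto(t,y)$; this produces a bundle isomorphism $\mathbb{S}\cong\pi^*(\mathbb{S}|_{\{1\}\times Y})$ which intertwines $I$ with $\pi^*(I|_{\{1\}\times Y})$, since transport of a real connection commutes with the real structure, and which is compatible with Clifford multiplication up to the standard dimensional-reduction identities — or by appealing to \cref{lem:spinc4manifold} and \cref{classification:real_spin} for the $4$-manifold with boundary $X=[0,1]\times Y$ (whose involution has the same fixed-point type, free or with codimension-$2$ fixed locus, as $\tau$): since the restriction map sends $H^2(X)^{-\tau^*}\oplus H^1(X)^{\tau^*}/\operatorname{im}(1+\tau^*)$, resp.\ $H^2(X/\tau;l)$, and their spin analogues isomorphically onto the corresponding groups for $\{1\}\times Y$ by homotopy invariance, two real spin$^c$ (resp.\ spin) structures on $X$ with isomorphic restrictions are isomorphic; that is, $\Psi$ is injective, and together with $\Psi\circ\Phi=\mathrm{id}$ this forces $\Phi$ and $\Psi$ to be bijective.

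The main obstacle — more a matter of care than of difficulty — is tracking the anti-linear involution $I$ (and its commutation with $j$ in the spin case) through whatever identification $\mathbb{S}\cong\pi^*(\mathbb{S}|_{\{1\}\times Y})$ one uses: in the direct approach this is precisely the assertion that the transport can be chosen real, and in the second approach it is packaged inside the cited classification statements, which already incorporate the real structure. Everything else reduces to the routine check that the explicit Clifford-multiplication formulas in dimensions $3$ and $4$ match under pullback along $\pi$ and restriction to $\{1\}\times Y$. I would close by noting that the two-fold ambiguity of a real spin structure (the sign of $I$, cf.\ the remark following \cref{classification:real_spin}) is respected by both $\Phi$ and $\Psi$, so the stated bijection holds in the spin case as well.
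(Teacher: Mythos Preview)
Your proposal is correct and follows the same approach as the paper: the paper's ``proof'' is just the paragraph preceding the lemma, which defines exactly your $\Phi$ and $\Psi$ and then asserts that ``this ensures'' the bijection without further argument. You have supplied the verification that the paper omits --- in particular the nontrivial direction $\Phi\circ\Psi=\mathrm{id}$ --- and your two suggested arguments (parallel transport along an $I$-invariant connection, or an appeal to the classification in \cref{lem:spinc4manifold} and \cref{classification:real_spin} together with homotopy invariance of the classifying cohomology) are both valid and more complete than what the paper provides.
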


\begin{lem}
Suppose $Y$ is a closed, oriented 3-manifold equipped with a smooth involution  
\[
\tau \colon Y \to Y.
\]
Regarding existence, we have the following:  
\begin{itemize}
    \item If $\tau$ has a non-empty fixed point set of codimension $2$, then the condition  
    \[
    \tau^* \mathfrak{s} \cong \overline{\mathfrak{s}} \qquad \text{ and } \qquad H^1(Y; \Z)^{-\tau^* } =0 
    \]
    is sufficient to ensure the existence of a real spin$^c$ structure on $\mathfrak{s}$.
    
    \item If $\tau$ is free, then we have a regular double cover $Y \to Y/\tau$. In this case, the existence of an $O(2)$-bundle $E$ over $Y/ \tau$ such that 
    \begin{align*}
    w_2 (E) + w_1(E)^2  = 0 \qquad \text{ and }\qquad 
    w_1(E) = w_1 (l)
    \end{align*}
    is sufficient to ensure the existence of a real spin$^c$ structure on $Y$, where $l$ is the local system with fiber $\mathbb{Z}$ induced by the double cover.
\end{itemize}
For the classification, if we fix a spin$^c$ structure $\mathfrak{s}$ on $Y$, there is a one-to-one correspondence between isomorphism classes of real structures and  
\[
H^2(Y)^{-\tau^*} \oplus \frac{H^1(Y)^{\tau^*}}{\operatorname{im}(1+\tau^* \colon H^1(Y) \to H^1(Y))}
\]
in the non-free case. In the free case, the classification is given by  
\[
H^2(Y/\tau; l).
\]
\end{lem}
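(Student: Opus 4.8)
The plan is to deduce this $3$-dimensional statement from its $4$-dimensional counterpart \cref{lem:spinc4manifold} by passing to the cylinder $X=[0,1]\times Y$ equipped with the product involution $\widetilde\tau=\mathrm{id}_{[0,1]}\times\tau$, and then transporting existence and classification back to $Y$ via \cref{lem:3manifoldtimesI} (and, in the free case, \cref{equivalence_defn}), exactly in the spirit of the proof of \cref{lem:spinc4manifold}.

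First I would set up the elementary dictionary between $(Y,\tau)$ and $(X,\widetilde\tau)$. The projection $\pi\colon X\to Y$ is a $\widetilde\tau$-equivariant homotopy equivalence, so $\pi^*$ gives isomorphisms $H^k(X)\cong H^k(Y)$ intertwining $\widetilde\tau^*$ with $\tau^*$; hence it matches $H^2(X)^{-\widetilde\tau^*}$ with $H^2(Y)^{-\tau^*}$, the quotient $H^1(X)^{\widetilde\tau^*}/\operatorname{im}(1+\widetilde\tau^*)$ with $H^1(Y)^{\tau^*}/\operatorname{im}(1+\tau^*)$, the condition $H^1(X)^{-\widetilde\tau^*}=0$ with $H^1(Y)^{-\tau^*}=0$, and conjugation-invariance $\widetilde\tau^*(\pi^*\mathfrak s)\cong\overline{\pi^*\mathfrak s}$ with $\tau^*\mathfrak s\cong\overline{\mathfrak s}$. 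Moreover $\widetilde\tau$ has a non-empty codimension-$2$ fixed set $[0,1]\times\mathrm{Fix}(\tau)$ exactly when $\tau$ does, and $\widetilde\tau$ is free exactly when $\tau$ is; in the free case $X/\widetilde\tau=[0,1]\times(Y/\tau)$, the induced $\mathbb Z$-local system is the pullback of $l$, pullback along $[0,1]\times(Y/\tau)\to Y/\tau$ is a bijection on isomorphism classes of $O(2)$-bundles, and $H^2(X/\widetilde\tau;l)\cong H^2(Y/\tau;l)$. Finally, from $T\bigl([0,1]\times(Y/\tau)\bigr)\cong\underline{\mathbb R}\oplus\pi^*T(Y/\tau)$ one gets $w_2(X/\widetilde\tau)=\pi^*w_2(Y/\tau)$, and $w_2(Y/\tau)=0$ since $Y/\tau$ is a parallelizable (orientable, as $\tau$ preserves orientation) $3$-manifold — in general $w_2(Y/\tau)=w_1(Y/\tau)^2$ by Wu's formula — so the $4$-dimensional condition $w_2(X/\widetilde\tau)+w_2(E)+w_1(E)^2=0$ specializes precisely to $w_2(E)+w_1(E)^2=0$.

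Granting this dictionary, all four assertions (existence in the non-free and free cases, classification in the non-free and free cases) follow by applying \cref{lem:spinc4manifold} to $(X,\widetilde\tau)$ and reading the result back through \cref{lem:3manifoldtimesI} — using that the bijection there respects the underlying spin$^c$ structure $\pi^*\mathfrak s\leftrightarrow\mathfrak s$ in the non-free case — together with \cref{equivalence_defn} in the free case. Alternatively, one can argue directly as in the proof of \cref{lem:spinc4manifold}: the non-free existence is \cite[Lemma 2.9]{KMT:2024} adapted to three dimensions, the free-case existence and classification are Nakamura's results \cite[Proposition 3.4]{Na13}, \cite[Proposition 2.3]{Nak15} on spin$^{c-}$ structures on $Y/\tau$ transported by \cref{equivalence_defn}, and the non-free classification is \cite[Section 3.1]{Ji22}.

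I expect the only genuinely delicate point to be the free-case bookkeeping: checking that $w_2(X/\widetilde\tau)$ vanishes so that the $3$-dimensional existence condition is exactly the specialization of the $4$-dimensional one, and that the identifications of $O(2)$-bundles and of $H^2(-;l)$ across the homotopy equivalence $[0,1]\times(Y/\tau)\simeq Y/\tau$ are compatible with the correspondence of \cref{lem:3manifoldtimesI}. The non-free case and the rest of the classification are a routine transcription of the $4$-dimensional argument.
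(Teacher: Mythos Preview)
Your proposal is correct and follows exactly the paper's approach: the paper's proof is the single sentence ``By \cref{lem:3manifoldtimesI}, the proof follows by taking $Y$, forming the product $[0,1]\times Y$, and applying~\cref{lem:spinc4manifold},'' and you have simply unpacked the dictionary that this sentence leaves implicit, including the observation that $w_2(Y/\tau)=0$ so the $4$-dimensional condition specializes correctly.
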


\begin{proof}
    By \cref{lem:3manifoldtimesI}, the proof follows by taking $Y$, forming the product $[0,1] \times Y$, and applying~\cref{lem:spinc4manifold}.
\end{proof}

Similarly, we obtain the following result for real spin structures:

\begin{lem}\label{classification:real_spiny}
Suppose $Y$ is a closed, oriented 3-manifold equipped with a smooth involution  
\[
\tau \colon Y \to Y.
\]
Suppose $\mathfrak{s}$ is a spin structure on $Y$. Regarding existence, we have
\begin{itemize}
    \item If $\tau$ has a non-empty fixed point set of codimension $2$, then the condition  
    \[
    \tau^* \mathfrak{s} \cong \mathfrak{s}
    \]
    is sufficient for $\mathfrak{s}$ to admit a real spin structure. In this case, $\tau$ is called an \emph{odd involution}.

    \item If $\tau$ is free, then we have a double cover $Y \to Y/\tau$. In this case, the condition
    \[
    w_1(l)^2 = 0,
    \]
    is a sufficient condition for $Y$ to adimt a real spin structure. Here, $l$ is the real line bundle  
    \[
    l = Y \times_{\mathbb{Z}_2} \mathbb{R} \to Y/\tau.
    \]
    In this case, $Y/\tau$ also admits a real spin structure.
\end{itemize}
For the classification, if a real spin structure exists, its classification is given by the isomorphism classes of real line bundles with an order $2$ lift of $\tau$ and the choice of the sign of $I$.   In particular, if we fix a spin structure $\mathfrak{s}$ on $Y$, there are exactly two choices of real spin structures on it. \qed
\end{lem}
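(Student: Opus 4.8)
The plan is to deduce every assertion from the four-dimensional statement \cref{classification:real_spin}, by passing to the cylinder $[0,1]\times Y$ and using the bijection of \cref{lem:3manifoldtimesI}. Equip $[0,1]\times Y$ with the involution $\id\times\tau$, which preserves the time direction; then $\id\times\tau$ is free exactly when $\tau$ is, and $\mathrm{Fix}(\id\times\tau)=[0,1]\times\mathrm{Fix}(\tau)$ is non-empty of codimension two in $[0,1]\times Y$ exactly when $\mathrm{Fix}(\tau)$ is non-empty of codimension two in $Y$. In the free case, $([0,1]\times Y)/(\id\times\tau)=[0,1]\times(Y/\tau)$ and the associated double cover restricts to $Y\to Y/\tau$ on each slice, so the relevant local system on $[0,1]\times(Y/\tau)$ is the pullback of $l$.

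For existence in the non-free case, a spin structure $\mathfrak{s}$ on $Y$ satisfies $\tau^*\mathfrak{s}\cong\mathfrak{s}$ if and only if its pullback to $[0,1]\times Y$ is $(\id\times\tau)$-invariant, so \cref{classification:real_spin}(i) yields a real spin structure on the cylinder, hence on $Y$ by \cref{lem:3manifoldtimesI}. In the free case, $Y/\tau$ is a closed $3$-manifold, so $w_2(Y/\tau)=w_1(Y/\tau)^2$ by the Wu formula; since $w_1(Y/\tau)$ is either $0$ or $w_1(l)$, the hypothesis $w_2(([0,1]\times Y)/(\id\times\tau))+w_1(l)^2=0$ of \cref{classification:real_spin}(ii) is implied by our assumption $w_1(l)^2=0$, and we transfer back along the cylinder. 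That $Y/\tau$ itself carries a real spin structure is then immediate, since in the free case a real spin structure on $Y$ is the same datum as a spin$^-$ structure on $(Y/\tau,l)$ by \cref{equivalence_defn_spin} (equivalently, an orientable closed $3$-manifold is spin).

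For the classification, \cref{lem:3manifoldtimesI} identifies real spin structures on $Y$ with those on $[0,1]\times Y$, and by \cref{classification:real_spin} the latter, with the underlying spin structure fixed, are classified by isomorphism classes of real line bundles carrying an order-two lift of $\id\times\tau$ together with a choice of sign of $I$. Because $[0,1]$ is contractible, pullback and restriction give a bijection between such line-bundle-with-lift data on $[0,1]\times Y$ and on $Y$; combined with the sign this is precisely the asserted classification, and with $\mathfrak{s}$ fixed the only possibilities are the trivial real line bundle with its order-two lift together with the two signs, i.e.\ $I$ and $-I$, so there are exactly two. The only step that is not purely formal is the Stiefel--Whitney bookkeeping for the quotient in the free case; everything else is a transfer along the cylinder, which is why the lemma may be recorded with its proof omitted.
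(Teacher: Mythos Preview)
Your proof is correct and follows exactly the approach the paper intends: the lemma is recorded with a \qed because it is analogous to the preceding spin$^c$ lemma, whose proof is ``by \cref{lem:3manifoldtimesI}, form the product $[0,1]\times Y$ and apply the four-dimensional statement.'' You have simply spelled out the details of that transfer, including the Stiefel--Whitney check in the free case, which the paper leaves implicit. One small remark: since the involutions in this paper are always orientation-preserving, $Y/\tau$ is orientable and hence $w_1(Y/\tau)=0$ directly, so your case distinction ``$w_1(Y/\tau)$ is either $0$ or $w_1(l)$'' is more general than needed here.
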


\subsection{Real structures on surgeries}
Let $Y$ be an oriented $\mathbb{Z}_2$-homology $3$-sphere, and let $K$ be a null-homologous knot in $Y$. Then, there exists a unique double branched cover along $K$ because there is a unique non-trivial homomorphism $$H_1(Y \setminus K) \twoheadrightarrow \mathbb{Z}_2.$$ Let us denote the double branched cover by $\Sigma_2(Y, K)$ and let $\tau$ be the covering involution. In this situation, we have the following lemma:

\begin{lem}\label{classification_real_spinc}
    There exists unique real spin structure on  $\Sigma_2(Y, K)$ up to isomorphism and sign. 
\end{lem}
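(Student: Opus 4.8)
The plan is to reduce the statement to the three‑dimensional existence‑and‑classification result \cref{classification:real_spiny}, after establishing two facts about $\Sigma:=\Sigma_2(Y,K)$ together with its covering involution $\tau$: that $\Sigma$ is a $\mathbb{Z}_2$-homology $3$-sphere, and that $\tau$ is an \emph{odd} involution in the sense of \cref{classification:real_spiny}(i). The second fact is immediate: $\mathrm{Fix}(\tau)$ is exactly the branch locus $\widetilde K$, a copy of $K$, hence a non-empty embedded circle in $\Sigma$, i.e.\ a non-empty submanifold of codimension $2$.

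Granting the first fact, $H^1(\Sigma;\mathbb{Z}_2)=0$, so $\Sigma$ carries a unique spin structure $\mathfrak{s}$; since $\tau$ preserves the orientation of $\Sigma$ (being the covering involution of the oriented double branched cover), $\tau^{*}\mathfrak{s}$ is again a spin structure, hence $\tau^{*}\mathfrak{s}\cong\mathfrak{s}$ by uniqueness. Now \cref{classification:real_spiny}(i) applies and produces a real spin structure on $(\Sigma,\tau)$ refining $\mathfrak{s}$, which gives existence. For uniqueness, the classification part of \cref{classification:real_spiny} together with the Remark following it says that, once the underlying spin structure is fixed, the real spin structures are $(\mathfrak{s},I)$ and $(\mathfrak{s},-I)$; since $\mathfrak{s}$ is the unique spin structure on $\Sigma$, the real spin structure on $\Sigma_2(Y,K)$ is unique up to isomorphism and sign, as claimed.

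It remains to see that $\Sigma$ is a $\mathbb{Z}_2$-homology sphere; this is classical, and one way to carry it out is as follows. Write $Y=Y_0\cup_{T^2}\nu(K)$ with $Y_0=Y\setminus\mathrm{int}\,\nu(K)$, and correspondingly $\Sigma=\Sigma_0\cup_{T^2}\nu(\widetilde K)$, where $\widetilde K=\mathrm{Fix}(\tau)$ and $p\colon\Sigma_0\to Y_0$ is the connected unbranched double cover classified by the nonzero class $w\in H^1(Y_0;\mathbb{Z}_2)$. By Alexander duality in the $\mathbb{Z}_2$-homology sphere $Y$ one gets $H_*(Y_0;\mathbb{Z}_2)\cong H_*(S^1;\mathbb{Z}_2)$, with $H_1$ generated by the meridian $\mu$ of $K$ and $\langle w,[\mu]\rangle=1$. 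The Gysin (transfer) exact sequence of $p$, in which the connecting map is $\,\cap\, w\colon H_*(Y_0;\mathbb{Z}_2)\to H_{*-1}(Y_0;\mathbb{Z}_2)$, then yields $H_*(\Sigma_0;\mathbb{Z}_2)\cong H_*(S^1;\mathbb{Z}_2)$, with $H_1(\Sigma_0;\mathbb{Z}_2)$ generated by the class of the preimage circle $p^{-1}(\mu)$, which is precisely the meridian $\widetilde\mu$ of $\widetilde K$. Finally, in the Mayer--Vietoris sequence for $\Sigma=\Sigma_0\cup_{T^2}\nu(\widetilde K)$ the relevant map $H_1(T^2;\mathbb{Z}_2)\to H_1(\Sigma_0;\mathbb{Z}_2)\oplus H_1(\nu(\widetilde K);\mathbb{Z}_2)$ sends $\widetilde\mu$ to $(\text{generator},0)$ and the longitude $\widetilde\lambda$ to $(x,\text{generator})$ for some $x$, hence is an isomorphism of copies of $(\mathbb{Z}_2)^2$, which forces $H_1(\Sigma;\mathbb{Z}_2)=H_2(\Sigma;\mathbb{Z}_2)=0$.

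I expect the main obstacle to be this last homology computation — specifically, verifying that the meridian $\widetilde\mu$ of the branch locus represents the \emph{nontrivial} class in $H_1(\Sigma_0;\mathbb{Z}_2)$, which is exactly what makes the terminal Mayer--Vietoris map an isomorphism; everything else is bookkeeping layered on \cref{classification:real_spiny}. Alternatively, one may avoid the computation by citing the classical fact that the double branched cover of a $\mathbb{Z}_2$-homology sphere along a knot is again a $\mathbb{Z}_2$-homology sphere (e.g.\ via the presentation of $H_1(\Sigma_2(Y,K))$ by the symmetrized Seifert form, whose determinant is odd).
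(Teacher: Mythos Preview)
Your proof is correct and follows essentially the same approach as the paper: both establish $H^1(\Sigma_2(Y,K);\mathbb{Z}_2)=0$ to get a unique spin structure, observe that $\tau$ has codimension-$2$ fixed set and preserves this spin structure, and then invoke \cref{classification:real_spiny} for existence and uniqueness. The only differences are cosmetic: you supply a full argument for the $\mathbb{Z}_2$-homology sphere fact that the paper simply asserts, and you deduce uniqueness directly from the ``two choices once the spin structure is fixed'' clause of \cref{classification:real_spiny}, whereas the paper routes through the line-bundle classification (noting there are no nontrivial real line bundles on $Y$, hence none with involutive lift on $\Sigma_2(Y,K)$).
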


\begin{proof}
   Since $H^1(\Sigma_2(Y, K); \mathbb{Z}_2) = 0$, there exists a unique spin structure on $\Sigma_2(Y, K)$. The involution $\tau$ preserves this spin structure, and its fixed point set has codimension $2$. Therefore, by \Cref{classification:real_spiny}, we conclude that this spin structure admits a real spin structure.
   On the other hand, there is no non-trivial line bundle over $Y$. This means there is no non-trivial line bundle with involutions over $\Sigma_2(Y, K)$. Again from \cref{classification:real_spiny}, we see the uniqueness. 
   This completes the proof. 
\end{proof}   
 Fix an even integer $n$ and a null-homologous knot $K$ in an oriented 
 $\Z_2$-homology 3-sphere $Y$. Consider the trace of $n$-framed surgery $X_n(K)$, which can be viewed as a cobordism from $Y$ to the $n$-framed surgery $Y_n(K)$ along $K$. Let $S_K$ denote the core of the 2-handle of $X_n(K)$, regarded as a knot cobordism from $K$ to the empty set
\[
S_K \colon (Y, K ) \to (Y_n (K), \emptyset ) . 
\]
The following computation follows directly from the Mayer–Vietoris sequence:

\begin{lem} \label{homology} We have the following homological properties of $X_n(K)$: \begin{itemize} 

\item[(i)] $H_i(X_n(K)) \cong
\begin{cases} \mathbb{Z}, & \text{if } i = 0,2 \\ H_1(Y), & \text{if } i = 1 \\ 0, & \text{otherwise}. \end{cases}$ 

\item[(ii)] The intersection form of $X_n(K)$ is represented by $(n)$. 

\item[(iii)] $H_1(X_n(K) \setminus S_K) \cong \begin{cases} \mathbb{Z} \oplus H_1(Y), & \text{if } n = 0 \\ \Z_{|n|}\oplus H_1(Y) , &  \text{otherwise}. \end{cases}$ 

\item[(iv)] The inclusion-induced map $H_1(Y \setminus K) \twoheadrightarrow  H_1(X_n(K) \setminus S_K)$ is surjective. \qed

\end{itemize} \end{lem}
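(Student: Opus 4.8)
The plan is to work from the handle description $X_n(K) = (Y\times[0,1])\cup_\phi h$, where $h = D^2\times D^2$ is a $2$-handle attached along a tubular neighborhood $\nu(K)\cong S^1\times D^2$ of $K\times\{1\}$ with framing $n$. For (i) and (ii) I would run the long exact sequence of the pair $(X_n(K),\,Y\times\{0\})$ (equivalently, Mayer--Vietoris for $X_n(K) = (Y\times[0,1])\cup h$ with overlap $\simeq S^1$). By excision, $H_*(X_n(K),\,Y\times\{0\})\cong H_*(D^2\times D^2,\,S^1\times D^2)\cong\widetilde H_*(S^2)$, which is $\mathbb{Z}$ in degree $2$ and $0$ otherwise, the degree-$2$ generator being represented by $S_K$ itself. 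The connecting map $H_2(X_n(K),Y)\cong\mathbb{Z}\to H_1(Y)$ sends this generator to $[\partial S_K] = [K]\in H_1(Y)$, which vanishes since $K$ is null-homologous; combined with $H_2(Y) = 0$ (which holds because $Y$ is a $\mathbb{Z}_2$-homology sphere), the long exact sequence reads off $H_0\cong\mathbb{Z}$, $H_1\cong H_1(Y)$, $H_2\cong\mathbb{Z}$, with the remaining degrees following from the same sequence, which is (i). For (ii), since $[K]=0$ pick a Seifert surface $F\subset Y$ for $K$; the closed surface $\widehat\Sigma$ obtained by pushing $F\times\{1\}$ into the collar and capping it with the core disk $D^2\times\{0\}$ of $h$ represents a generator of $H_2(X_n(K),Y)\cong\mathbb{Z}$, hence of $H_2(X_n(K))$, and its self-intersection is the Euler number of its normal bundle, which is $n$ because the handle framing differs from the Seifert framing of $K$ by $n$; so the intersection form is $(n)$ (equivalently, the intersection form of the trace of an integer surgery is its linking matrix).

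For (iii) and (iv) I would repeat this for the complement of $S_K$. Here $S_K = (K\times[0,1])\cup(\text{core of }h)$ is a properly embedded disk with $\partial S_K = K\subset Y$, and removing an open tubular neighborhood gives
\[
X_n(K)\setminus\nu(S_K)\;=\;\big((Y\setminus\nu K)\times[0,1]\big)\ \cup\ \big(h\setminus\nu(\mathrm{core})\big),\qquad h\setminus\nu(\mathrm{core})\cong D^2\times A,
\]
for an annulus $A$, glued along $S^1\times A$. I would run the long exact sequence of the pair $\big(X_n(K)\setminus\nu(S_K),\ (Y\setminus\nu K)\times[0,1]\big)$: by excision and the relative Künneth formula its homology is $H_*(D^2\times A,\,S^1\times A)\cong H_*(D^2,S^1)\otimes H_*(A)$, namely $\mathbb{Z}$ in degrees $2$ and $3$ and $0$ otherwise. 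Since $H_1$ of the pair vanishes, the long exact sequence at once gives surjectivity of $H_1(Y\setminus K)\cong H_1((Y\setminus\nu K)\times[0,1])\to H_1(X_n(K)\setminus\nu(S_K))$ — this is (iv) — and identifies the target with $H_1(Y\setminus K)$ modulo the image of the connecting map $\mathbb{Z}\to H_1(Y\setminus K)$. That image is generated by the curve $\partial D^2\times\{\mathrm{pt}\}$ carried through $\phi$, i.e.\ by the $n$-framed longitude $\lambda+n\mu$ on $\partial\nu(K)$. Taking $\lambda$ to be the Seifert longitude $\lambda_0$ (so $[\lambda_0]=0$ in $H_1(Y\setminus\nu K)$) and using the splitting $H_1(Y\setminus\nu K)\cong\mathbb{Z}\langle\mu\rangle\oplus H_1(Y)$ — valid because $K$ is null-homologous, with the linking number with $K$ splitting off $\mathbb{Z}\langle\mu\rangle$ and the complement mapping isomorphically onto $H_1(Y)$ — the connecting map is multiplication by $n$ on $\mathbb{Z}\langle\mu\rangle$ and zero on $H_1(Y)$. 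Hence $H_1(X_n(K)\setminus\nu(S_K))\cong(\mathbb{Z}/n)\oplus H_1(Y)$, i.e.\ $\mathbb{Z}\oplus H_1(Y)$ when $n=0$ and $\mathbb{Z}_{|n|}\oplus H_1(Y)$ when $n\neq0$, which is (iii).

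The step I expect to require the most care is the framing bookkeeping in (iii): identifying the attaching curve of the cut-open handle $h\setminus\nu(\mathrm{core})$ with the $n$-framed longitude and locating its class in $H_1(Y\setminus\nu K)$, so that the connecting homomorphism is exactly multiplication by $n$ on the meridian summand; and arranging the answer as a genuine direct sum rather than an extension, which follows since the inclusion $X_n(K)\setminus\nu(S_K)\hookrightarrow X_n(K)$ induces (by part (i)) a retraction of $H_1$ onto $H_1(Y)$. Everything else — the excision identifications, the Künneth computation, and the exact sequences — is routine and runs exactly as for surgeries on knots in $S^3$, with the finite group $H_1(Y)$ simply carried along.
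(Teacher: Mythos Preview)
Your proof is correct and follows exactly the approach the paper indicates (the paper gives no details beyond ``follows directly from the Mayer--Vietoris sequence''). One caveat: if you actually carry your long exact sequence into degree~$3$ you will find $H_3(X_n(K))\cong H_3(Y)\cong\mathbb{Z}$ rather than $0$, since attaching a $2$-cell does not affect $H_3$ --- this is a harmless misprint in the stated lemma (the $H_3$ term plays no role in the applications), not a flaw in your argument.
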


Since we assume that $n$ is even, there exists a unique non-trivial homomorphism
\[
H_1(X_n(K)\setminus S_K) \cong \mathbb{Z}_{|n|}\oplus H_1(Y) \twoheadrightarrow \Z_2. 
\]
Thus, we can consider the double branched cover of $X_n(K)$ along $S_K$, denoted by $\wt{X}_n(K)$, which can be viewed as a $\mathbb{Z}_2$-equivariant $4$-manifold cobordism from $\Sigma_2(Y, K)$ to $\wt{Y}_n(K)$. Here, $\wt{Y}_n(K)$ is the regular double cover associated with the unique non-trivial homomorphism
\[
H_1(Y_n(K)) \twoheadrightarrow \Z_2. 
\]
We collect the following computations for the double branched cover of $X_n(K)$.

\begin{lem}\label{lem:prop4surgery}
Suppose $n$ is non-positive and divisible by $4$. Then the following holds:
\begin{itemize} 

\item[(i)] $\wt{X}_n(K)$ is spin. 

\item[(ii)] $b^+(\wt{X}_n(K)) = 0$. 

\item[(iii)] $H^1(\wt{X}_n(K), \partial \wt{X}_n(K); \mathbb{Z}_2) = 0$. 

\end{itemize} \end{lem}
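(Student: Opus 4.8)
The plan is to give an explicit handle description of the branched cover $\wt{X}_n(K)$ and then read off all three properties. First I would write $X_n(K) = (Y\times[0,1])\cup_{\nu(K)\times\{1\}}h$, where $h$ is the $2$-handle attached along $K$ with framing $n$, so that the branch surface is $S_K = (K\times[0,1])\cup(\text{core of }h)$. Since $S_K$ meets the gluing solid torus $\nu(K)\times\{1\}\cong S^1\times D^2$ exactly in its core circle $K\times\{1\}$, the branched double cover splits along the branched double cover of that solid torus (again a solid torus), giving
\[
\wt{X}_n(K)\;\cong\;\bigl(\Sigma_2(Y,K)\times[0,1]\bigr)\cup\wt{h},
\]
where $\Sigma_2(Y,K)\times[0,1]$ is the branched double cover of $Y\times[0,1]$ along $K\times[0,1]$ and $\wt{h}$, the branched double cover of $h = D^2\times D^2$ along its core $D^2\times\{0\}$, is another copy of $D^2\times D^2$, attached to $\Sigma_2(Y,K)\times\{1\}$ as a $2$-handle along the branch circle $\wt{K}$ (the single lift of $K$). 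The one quantitative input is the framing of this handle: because $n$ is even, the $n$-framed longitude $\lambda_0+n\mu$ of $K$ lifts to a longitude $\widetilde{\lambda}_0+\tfrac n2\widetilde{\mu}$ of $\wt{K}$ on $\partial\nu(\wt{K})$ (here $\widetilde{\mu}$ is the meridian of $\wt{K}$ and $\widetilde{\lambda}_0$ is a lift of the Seifert longitude $\lambda_0 = F\cap\partial\nu(K)$ of $K$ for a chosen Seifert surface $F\subset Y$), so $\wt{h}$ is the $2$-handle on $\wt{K}$ with framing $n/2$. Moreover every loop in $\mathrm{int}\,F$ has zero linking number with $K$, so $\pi^{-1}(F)$ is a union of two copies of $F$ meeting along $\wt{K}$, and either copy $\overline{F}_1$ is an honest Seifert surface for $\wt{K}$ realizing the longitude $\widetilde{\lambda}_0$. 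In short, $\wt{X}_n(K)$ is the trace of $(n/2)$-framed surgery on $\wt{K}\subset\Sigma_2(Y,K)$.

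Next I would extract the homology. Since $H^1(\Sigma_2(Y,K);\mathbb{Z}_2)=0$, the manifold $\Sigma_2(Y,K)$ is a $\mathbb{Z}_2$-homology sphere, hence also a $\mathbb{Q}$-homology sphere; and $\wt{K}$ bounds $\overline{F}_1$, so it is $\mathbb{Z}$-nullhomologous. Attaching the $2$-handle $\wt{h}$ therefore gives, for $R=\mathbb{Z}_2$ or $\mathbb{Q}$,
\[
H_1(\wt{X}_n(K);R)=0,\qquad H_2(\wt{X}_n(K);R)=R\langle\sigma\rangle,\qquad \sigma\cdot\sigma=\tfrac n2,
\]
where $\sigma$ is the class of the core of $\wt{h}$ capped off by $\overline{F}_1$; and $\wt{X}_n(K)$ is orientable, being a branched double cover of an oriented $4$-manifold along an oriented surface, so $w_1=0$. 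Property (ii) is then immediate: $n\le 0$ forces $\sigma\cdot\sigma = n/2\le 0$, so the intersection form is negative semi-definite and $b^+(\wt{X}_n(K))=0$. Property (i) follows from the (relative) Wu formula: since $w_1=0$ one has $w_2(\wt{X}_n(K))=v_2(\wt{X}_n(K))$, and $v_2$ is detected by the mod-$2$ self-intersection of classes in $H_2(\wt{X}_n(K);\mathbb{Z}_2)$; as $4\mid n$, the value $\sigma\cdot\sigma = n/2$ is even, so this pairing vanishes, hence $v_2=0$ and $\wt{X}_n(K)$ is spin.

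For property (iii), I would use the handle description directly. From Step 1, $\wt{X}_n(K)$ is built from $\Sigma_2(Y,K)\times[0,1]$ by attaching the single $2$-handle $\wt{h}$, so $H_*(\wt{X}_n(K),\Sigma_2(Y,K);\mathbb{Z}_2)$ is concentrated in degree $2$; Poincaré--Lefschetz duality then gives $H^1(\wt{X}_n(K),\wt{Y}_n(K);\mathbb{Z}_2)\cong H_3(\wt{X}_n(K),\Sigma_2(Y,K);\mathbb{Z}_2)=0$. Turning the cobordism upside down (its dual handle is again a single $2$-handle) gives likewise $H^1(\wt{X}_n(K),\Sigma_2(Y,K);\mathbb{Z}_2)=0$, and $H^1(\wt{X}_n(K);\mathbb{Z}_2)=0$ from $H_1(\wt{X}_n(K);\mathbb{Z}_2)=0$; so the first $\mathbb{Z}_2$-cohomology of $\wt{X}_n(K)$ relative to its boundary (read as either end of the cobordism) vanishes, which is (iii).

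The substantive work is Step 1, and inside it the framing computation. That $\wt{h}$ attaches as a genuine $2$-handle along $\wt{K}$ hinges on $n$ being even: this is exactly what makes the attaching circle $K$ lift to a single branch circle with a product solid-torus neighborhood, rather than the cover being connected along it. The halving of the framing is the key quantitative point, and it is precisely why the hypothesis must be ``$4\mid n$'' rather than merely ``$n$ even'' to obtain spinness in (i). Everything else is the same Mayer--Vietoris / handle-exact-sequence bookkeeping already used to prove \Cref{homology}.
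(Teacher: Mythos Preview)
Your proof is correct and follows the same approach as the paper: identify $\wt{X}_n(K)$ as the $(n/2)$-framed surgery trace of the lift $\wt{K}\subset\Sigma_2(Y,K)$ and then invoke the homology computations of \Cref{homology}. You supply considerably more detail than the paper (which dispatches the whole lemma in three sentences), notably the explicit framing-halving argument and the Seifert-surface lift showing $\wt{K}$ is nullhomologous; your reading of (iii) as ``relative to either end of the cobordism'' is also the correct interpretation, since $H^1(\wt{X}_n(K),\partial\wt{X}_n(K);\mathbb{Z}_2)\cong H_3(\wt{X}_n(K);\mathbb{Z}_2)=\mathbb{Z}_2$ when $\partial$ denotes the full boundary.
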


\begin{proof}
    Note that $\wt{X}_n(K)$ can be viewed as the $(n/2)$-framed surgery trace of $\wt{K} \subset \Sigma_2(Y, K)$, where $\wt{K}$ is a lift of $K$. Since $n/2$ is even, it follows from \Cref{homology} that $\wt{X}_n(K)$ is spin. Moreover, the assumption that $n$ is non-positive implies that $b^+(\wt{X}_n(K)) = 0$. The final bullet point follows from the Mayer–Vietoris sequence, similar to the computations in \Cref{homology}.
\end{proof}

\begin{cor}\label{uniquenessof0surgery}
    For any null-homologous knot $K$ in an oriented $\Z_2$-homology $3$-sphere $Y$, the double branched cover $\wt{X}_0(K)$ of the trace of zero-framed surgery $X_0(K)$ has a real spin structure. 
\end{cor}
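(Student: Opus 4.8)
The plan is to deduce \Cref{uniquenessof0surgery} as the special case $n=0$ of the preceding lemmas. First I would observe that $n=0$ is non-positive and divisible by $4$, so \Cref{lem:prop4surgery} applies: the double branched cover $\wt{X}_0(K)$ is spin, has $b^+ = 0$, and satisfies $H^1(\wt{X}_0(K),\partial\wt{X}_0(K);\Z_2)=0$. The covering involution $\tau$ on $\wt{X}_0(K)$ is a smooth involution whose fixed point set is the core cobordism $S_K$ (together with $\wt K \subset \Sigma_2(Y,K)$ on the incoming end), which has codimension $2$ and is non-empty.

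Next I would verify that the hypotheses of \Cref{classification:real_spin}(i) hold for the spin structure $\mathfrak{s}$ on $\wt{X}_0(K)$: since $\wt{X}_0(K)$ is spin it carries a spin structure, and because $H^1(\wt{X}_0(K);\Z_2)$ is controlled (one should check, as in \Cref{homology} applied to the $(n/2)=0$-framed surgery trace of $\wt K \subset \Sigma_2(Y,K)$, that the relevant $\Z_2$-cohomology is small enough that there is essentially a unique spin structure up to the $\tau$-action, or at least one that is $\tau$-invariant), the pullback $\tau^*\mathfrak{s}$ is isomorphic to $\mathfrak{s}$. With $\tau^*\mathfrak{s}\cong\mathfrak{s}$ and the fixed point set of codimension $2$, \Cref{classification:real_spin}(i) gives that $\mathfrak{s}$ admits a real spin structure, i.e.\ $\tau$ is an odd involution on $(\wt{X}_0(K),\mathfrak{s})$. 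This proves the existence assertion.

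For the conclusion I would just state existence as in \Cref{uniquenessof0surgery}; if a uniqueness-up-to-sign statement were also wanted, it would follow from the classification part of \Cref{classification:real_spin}, since real spin structures on a fixed $\mathfrak{s}$ are classified by isomorphism classes of real line bundles with an order-two lift of $\tau$ plus a sign, and the $\Z_2$-cohomology computation shows there are no nontrivial such line bundles.

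\textbf{Main obstacle.} The genuinely non-formal point is confirming $\tau^*\mathfrak{s}\cong\mathfrak{s}$, equivalently pinning down enough of $H^1(\wt{X}_0(K);\Z_2)$ and checking that the chosen spin structure is $\tau$-invariant; everything else is an immediate application of \Cref{lem:prop4surgery} and \Cref{classification:real_spin}. I expect this to be handled exactly as in the proof of \Cref{lem:prop4surgery} via Mayer–Vietoris, viewing $\wt{X}_0(K)$ as the $0$-framed surgery trace of the lifted knot $\wt K$ in $\Sigma_2(Y,K)$ and using \Cref{homology}(i).
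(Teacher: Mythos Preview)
Your proposal is correct and follows the same route as the paper: invoke \Cref{lem:prop4surgery}(i) for spinness, note the branched covering involution has non-empty codimension-two fixed set, and apply \Cref{classification:real_spin}(i). The paper's proof is two sentences and leaves the verification of $\tau^*\mathfrak{s}\cong\mathfrak{s}$ implicit; your ``main obstacle'' dissolves immediately once you observe (via \Cref{homology}(i) applied to the $0$-framed trace of $\wt K$ in the $\Z_2$-homology sphere $\Sigma_2(Y,K)$) that $H^1(\wt{X}_0(K);\Z_2)=0$, so the spin structure is unique and automatically $\tau$-invariant.
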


\begin{proof}
From \cref{lem:prop4surgery} (i), we have that $\wt{X}_0(K)$ is spin.
Moreover, the fixed point set of the branched covering involution on $\wt{X}_0(K)$ is non-empty and has codimension two.
From \cref{classification:real_spin} (i), we obtain a real spin structure on $\wt{X}_0(K)$.  
\end{proof}

\section{Preliminaries of real Floer homotopy type for knots}

\subsection{Representations}\label{representations}

For a finite-dimensional vector space $V$, we denote by $V^+$ the one-point compactification of $V$.  We define the group $G = \mathbb{Z}_4$ as the cyclic group of order $4$ generated by $j \in \mathrm{Pin}(2)$, i.e.,  
\[
G = \{1, j, -1, -j\}.
\]
Define a subgroup $H$ of $G$ by  
\[
H = \{1, -1\} \subset G.
\]
Let $\mathbb{R}$ denote the trivial 1-dimensional real representation of $G$.  
Let $\tilde{\mathbb{R}}$ be the 1-dimensional real representation of $G$ defined via the surjection $G \to \mathbb{Z}_2 = \{1,-1\}$ and the corresponding scalar multiplication of $\mathbb{Z}_2$ on $\mathbb{R}$. Similarly, let $\tilde{\mathbb{C}}$ be the 1-dimensional complex representation of $G$ defined via the surjection $G \to \mathbb{Z}_2$ and the scalar multiplication of $\mathbb{Z}_2$ on $\mathbb{C}$.  
We introduce also $G$-representations $\mathbb{C}_+$ and $\mathbb{C}_-$
 where $\mathbb{C}_+$ and $\mathbb{C}_-$ are complex 1-dimensional representations defined by assigning to $j \in G$ multiplication by $i$ and $-i$, respectively.  
Note that we have the relation $\C_- = \tilde{\C} \cdot \C_+$
in the complex representation ring $R(G)$ of $G$. It is straightforward to check that the representation ring $R(G)$ is given by
\[
R(G) = \Z[w,z]/(w^2-2w, w-2z+z^2).
\]
Here, the generators $w$ and $z$ are given as the $K$-theoretic Euler classes of $\tilde{\mathbb{C}}$ and $\mathbb{C}_+$, namely,  
\[
w = 1 - \tilde{\mathbb{C}} \qquad \text{ and } \qquad z = 1 - \mathbb{C}_+.
\]
The augmentation map is given by  
\[
R(G) \to \mathbb{Z} ; \qquad
w, z \mapsto 0,
\]
so the augmentation ideal is given by $(w, z) \subset R(G)$. Readers may compare this expression with the standard expression of $R(G)$, given by  
\[
R(G) = \mathbb{Z}[t] / (t^4 - 1).
\]
An isomorphism is given as follows:
\[
\mathbb{Z}[w, z] / (w^2 - 2w, w - 2z + z^2) \to \mathbb{Z}[t] / (t^4 - 1) ; \qquad w \mapsto 1 - t^2, \qquad z \mapsto 1 - t.
\]

\subsection{Construction of real Floer homotopy type}
In this section, we review the construction of real Floer homotopy types for knots. For details, see \cite{Ma03, KMT:2021, KMT:2024}.

Although not essential, we focus on the case $Y = S^3$ in this section. 
Let $\mathfrak{s}_0$ be the unique spin structure on the double branched cover $\Sigma_2(K)$, $\tau \colon \Sigma_2(K) \to \Sigma_2(K)$ be the deck transformation, and $P$ be the principal $\operatorname{Spin}(3)$ bundle for $\mathfrak{s}_0$. Since the fixed point set is codimension $2$, we can take an order $4$ lift $\widetilde{\tau} \colon P \to P$ of the induced map
\[
\tau_* \colon {\rm SO}(T\Sigma_2(K)) \to {\rm SO}(T\Sigma_2(K)),
\]
where $SO(T\Sigma_2(K))$ is the orthonormal frame bundle of $\Sigma_2(K)$ with respect to a fixed invariant metric $g$ on $\Sigma_2(K)$.  Then, we have the infinite-dimensional functional
\[
CSD \colon \mathcal{C}_K := \left(i\ker d^* \subset i\Omega^1_{\Sigma_2(K)} \right) \oplus \Gamma (\mathbb{S}) \to \mathbb{R},
\]
called the \emph{Chern--Simons--Dirac functional}, where $\mathbb{S}$ is the spinor bundle with respect to $\mathfrak{s}_0$, $\Gamma (\mathbb{S})$ denotes the set of sections of $\mathbb{S}$, and $d^*$ denotes the $L^2$-formal adjoint of $d \colon i\Omega^0_{\Sigma_2(K)} \to i\Omega^1_{\Sigma_2(K)}$.   The Seiberg--Witten Floer homotopy type is defined as the Conley index of the finite-dimensional approximation of the formal gradient flow of $CSD$. For that purpose, we describe the formal gradient of $CSD$ as the sum $l+c$, where $l$ is a self-adjoint elliptic part and $c$ is a compact map. Then, we decompose $\mathcal{C}_K$ into eigenspaces of $l$. 
Define $V^{\lambda}_{-\lambda}(K) \oplus W^{\lambda}_{-\lambda}(K)$ to be the direct sums of the eigenspaces of $l$ whose eigenvalues are in $(-\lambda, \lambda]$ for form and spinor parts respectively and restrict the formal gradient flow $l + c$ to $V^{\lambda}_{-\lambda}(K)\oplus W^{\lambda}_{-\lambda}(K) $ for a fixed $\lambda \gg 0$.  Then by considering the Conley index $(N, L)$ for $\left(V^{\lambda}_{-\lambda}(K) \oplus W^{\lambda}_{-\lambda}(K) , l+p^{\lambda}_{-\lambda}c\right)$ with a  certain cutting off, we get the Manolescu's Seiberg--Witten Floer homotopy type
\begin{align*}
SWF(\Sigma_2(K), \frak{s}_0) := \Sigma^{-V^0_{-\lambda} \oplus W^0_{-\lambda}-n(\Sigma_2(K), \fraks_0, g))\C } N / L,  
\end{align*}
where $n(Y, \fraks_0, g)$ is the quantity given in \cite{Ma03} and $g$ is a Riemann metric on $\Sigma_2(K)$. For the meaning of desuspensions and how to formulate a well-defined homotopy type in a certain category, see \cite{Ma03}. 
For the latter purpose, we take $g$ as $\Z_2$-invariant metric.
Since we are working with the spin structure $\frak{s}_0$, we have an additional $\operatorname{Pin}(2)$-action on the configuration space $\mathcal{C}_K$ which preserves the values of $CSD$.
Now, we define an involution on $\mathcal{C}_K$
\[
I := j \circ \wt{\tau},  
\]
where $j$ is the quaternionic element in $\operatorname{Pin}(2) = S^1 \cup j\cdot S^1$, which represents the real spin structure on $\Sigma_2(K)$. \footnote{The map $-I$ also induces another real involution on the configuration space. One can easily check the invariants $\delta_R, \underline{\delta}_R$ and $\overline{\delta}_R$ we will focus on in this paper do not depend on such choices. }

Since $I$ also acts on $\mathbb{S}$ \emph{anti-complex linearly}, the lift $I$ is called a \emph{real structure} on $\mathfrak{s}_0$. Combined with the ${\rm Pin(2)}$-action of the Seiberg--Witten Floer homotopy type, we can take the \emph{Conley index} so that we have a ${\rm Pin(2)} \times_{\pm 1} \mathbb{Z}_4$-action on $SWF(\Sigma_2(K), \mathfrak{s}_0)$. 
We use the $j$-action on $\Gamma(\mathbb{S})^I$ to get a complex structure on it, which is modeled as $\C_+^\infty$. 

Recall that $G=\langle j \rangle$ and $H= \langle -1 \rangle \subset G$.  Now, we define 
\begin{align*}
SWF_R(K) :&= \Sigma^{-(V^0_{-\lambda} \oplus W^0_{-\lambda} )^I - \frac{1}{2} n (\Sigma_2(K), \mathfrak{s}_0, g) \mathbb{C}_+ } N^I / L^I \\
& =  \left[\left(N^I/L^I,
\dim_{\mathbb{R}}\left(V^0_{-\lambda}\right)^I,
\dim_{\mathbb{C}}\left(W^0_{-\lambda}\right)^I
+ \frac{n(Y, \mathfrak{t}, g)}{2} \right)\right] \in \mathfrak{C}_{G}
\end{align*}
which we call the \emph{real Seiberg--Witten Floer homotopy type} for $K$. Here, we take a \emph{${\rm Pin(2)} \times_{\pm 1} \mathbb{Z}_4$-invariant index pair} $(N, L)$ for the flow 
$\left(V^{\lambda}_{-\lambda}(K) \oplus W^{\lambda}_{-\lambda}(K), l + p^{\lambda}_{-\lambda} c\right)$
with a certain cutting-off. Since the action of $j$ commutes with $I$, we obtain a \emph{$G$-action} on the stable homotopy types $SWF_R(K)$.

Here, we briefly explain the category $\mathfrak{C}_{G}$ containing our Floer homotopy type. The objects of this category are tuples $(X, m, n)$, where:
\begin{itemize}
    \item $X$ is a pointed finite $G$-CW complex satisfying the following properties:
    \begin{itemize}
        \item $X^{H}$ is $G$-homotopy equivalent to $(\widetilde{\mathbb{R}}^s)^+$ for some $s \geq 0$.
        \item $G$ acts freely on $X \setminus X^{H}$.
    \end{itemize}
    \item $m$ is an integer. \item $n$ is a rational number. 
\end{itemize}
The set of morphisms from $(X, m, n)$ to $(X', m', n')$ is given by
\[
\operatorname{colim}_{l \to \infty} \left[ (\widetilde{\mathbb{R}}^{m+l} \oplus \mathbb{C}^{n+l}_+ )^+ \wedge X , (\widetilde{\mathbb{R}}^{m'+l} \oplus \mathbb{C}^{n'+l}_+  )^+ \wedge X' \right]_{G}.
\]
The Floer homotopy type determines a well-defined element
\[
[SWF_R(K)] \in \mathfrak{C}_{G}.
\]
Therefore, we have the following two equivariant cohomologies:
\begin{align*}
\widetilde{H}^*_{\mathbb{Z}_2} (SWF_R(K); \mathbb{F}_2) := \widetilde{H}^{* + \dim (V^0_{-\lambda})^I + 2 \dim_{\mathbb{C}}(W^0_{-\lambda})^I  +  n (\Sigma_2(K), \mathfrak{s}_0, g) }_{\mathbb{Z}_2} (N^I/L^I; \mathbb{F}_2),
\end{align*}
and
\begin{align*}
\widetilde{H}^*_{\mathbb{Z}_4} (SWF_R(K); \mathbb{F}_2) := \widetilde{H}^{* + \dim (V^0_{-\lambda})^I + 2 \dim_{\mathbb{C}}(W^0_{-\lambda})^I  +  n (\Sigma_2(K), \mathfrak{s}_0, g) }_{\mathbb{Z}_4} (N^I/L^I; \mathbb{F}_2),
\end{align*}
which are modules over $H^*(B\mathbb{Z}_2; \mathbb{F}_2)$ and $H^*(B\mathbb{Z}_4; \mathbb{F}_2)$, respectively.


\begin{rem}
    In the above construction of the Floer homotopy type $SWF_R(K)$, we first take a finite-dimensional approximation of the gradient flows of $CSD$ and the Conley index, and then we take the \emph{$I$-invariant} part. As an alternative construction, we can first take the \emph{$I$-invariant} of the configuration space and then compute the \emph{$G$-equivariant Conley index}. One can see that these two constructions are equivalent, which follows from formal properties of equivariant Conley indices.  In a later section, when we define \emph{real Floer homotopy types} for homology $S^1 \times S^2$, we shall first take the $I$-invariant part of the configuration space and then apply equivariant Conley index theory.
\end{rem}

Next, we consider the local equivalence classes which leads to the concordance invariants:
\begin{defn}
We say that two objects $(X, m, n)$ and $(X', m', n')$ are \emph{locally equivalent} if there exist $G$-equivariant maps
\begin{align*}
f &\colon (\widetilde{\mathbb{R}}^{m + \ell} \oplus \mathbb{C}_+^{n + \ell})^+ \wedge X \to (\widetilde{\mathbb{R}}^{m' + \ell} \oplus \mathbb{C}_+^{n' + \ell})^+ \wedge X', \\
f' &\colon (\widetilde{\mathbb{R}}^{m' + \ell} \oplus \mathbb{C}_+^{n' + \ell})^+ \wedge X' \to (\widetilde{\mathbb{R}}^{m + \ell} \oplus \mathbb{C}_+^{n + \ell})^+ \wedge X
\end{align*}
for sufficiently large $\ell \gg 0$, which induce $G$-homotopy equivalences on the $H$-fixed point sets.
\end{defn}

We denote by $\mathcal{LE}$ the set of all objects $[(X, m, n)]_{\mathrm{loc}}$ up to local equivalence. We regard $\mathcal{LE}$ as a group, with the group operation defined by
\[
[(X, m, n)]_{\mathrm{loc}} + [(X', m', n')]_{\mathrm{loc}} := [(X \wedge X', m + m', n + n')]_{\mathrm{loc}}.
\]
The element $[(S^0, 0, 0)]_{\mathrm{loc}}$ serves as the identity element, and the Whitehead--Spanier dual with $(-m, -n)$ provides the inverse. With respect to this group structure, the following result is proven in \cite{KMT:2024}:

\begin{thm}
    The map 
    \[
     \mathcal{C} \to \mathcal{LE} ; \qquad [K] \mapsto [SWF_R(K)]_{\text{loc}}
    \]
    defines a group homomorphism, where $\mathcal{C}$ denotes the smooth knot concordance group. 
\end{thm}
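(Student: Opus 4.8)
The plan is to establish the two properties that together make $[K]\mapsto[SWF_R(K)]_{\mathrm{loc}}$ a well-defined group homomorphism: (a) \emph{concordance invariance}, so that the class depends only on $[K]\in\mathcal C$, and (b) \emph{additivity under connected sum}, i.e.\ that $SWF_R(K_0\#K_1)$ and $SWF_R(K_0)\wedge SWF_R(K_1)$ are locally equivalent. Both are instances of the same mechanism: one produces a $\Z_2$-equivariant spin cobordism $W$ between the relevant double branched covers whose equivariant homology is as small as possible (so that $b^+(W)=0$, $W$ is spin, and $H^1(W,\partial W;\Z_2)=0$), equips $W$ with a real spin structure via \cref{classification:real_spin}, and forms its real Bauer--Furuta invariant, which is a morphism in $\mathfrak C_G$ between the two real Floer homotopy types; running the same construction for $\overline W$ gives a morphism in the opposite direction. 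The condition $b^+(W)=0$ is exactly what guarantees that these morphisms carry no spurious $\widetilde\R$- or $\C_+$-desuspension and that, after restriction to the $H$-fixed point sets, they become the ordinary Seiberg--Witten Floer cobordism maps of a rational homology cobordism, hence stable homotopy equivalences by Manolescu's work \cite{Ma03}. By definition this is precisely a local equivalence.

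For (a), I would take a concordance $C\subset S^3\times[0,1]$ from $K_0$ to $K_1$ and form its double branched cover $W:=\Sigma_2(S^3\times[0,1],C)$, a $\Z_2$-equivariant cobordism from $\Sigma_2(K_0)$ to $\Sigma_2(K_1)$. A Mayer--Vietoris argument, of the same kind as in \cref{homology}, shows that $W$ is a $\Z_2$-homology cobordism, so $b_i(W)=0$ for $i\ge 1$, $W$ is spin, and $H^1(W,\partial W;\Z_2)\cong H_3(W;\Z_2)=0$; the fixed point set of the covering involution is the lift of $C$, non-empty of codimension two. Then \cref{classification:real_spin}(i) supplies a real spin structure on $W$ restricting to the unique ones on the two ends, and the real Bauer--Furuta invariants of $(W,\text{real spin})$ and of $\overline W$ give morphisms $SWF_R(K_0)\to SWF_R(K_1)$ and $SWF_R(K_1)\to SWF_R(K_0)$ in $\mathfrak C_G$ whose $H$-fixed-point restrictions are the Floer cobordism maps of the $\Q$-homology cobordism $\Sigma_2(K_i)\to\Sigma_2(K_{1-i})$ with its spin structure, hence stable equivalences. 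Therefore $SWF_R(K_0)\sim_{\mathrm{loc}}SWF_R(K_1)$ and the map $\mathcal C\to\mathcal{LE}$ is well defined.

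For (b), I would use the $\Z_2$-equivariant diffeomorphism $\Sigma_2(K_0\#K_1)\cong\Sigma_2(K_0)\#\Sigma_2(K_1)$ — the connected sum taken equivariantly along the branch loci, along which the covering involutions and the (real) spin structures agree — and invoke the real version of the Manolescu connected-sum formula. Concretely, the split cobordism $W$ from $\Sigma_2(K_0)\sqcup\Sigma_2(K_1)$ to $\Sigma_2(K_0)\#\Sigma_2(K_1)$, obtained from a product by attaching a single equivariant $1$-handle in the free part, has $b^+(W)=0$ and carries a real spin structure; its real Bauer--Furuta invariant together with that of $\overline W$ yields the local equivalence $SWF_R(K_0\#K_1)\sim_{\mathrm{loc}}SWF_R(K_0)\wedge SWF_R(K_1)$, once one checks on $H$-fixed points that one recovers the classical smash-product identification of Seiberg--Witten Floer spectra and that the analytic correction terms $n(\Sigma_2(\cdot),\fraks_0,g)$ add under a sufficiently long neck. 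Since $\Sigma_2(U)=S^3$, one gets $[SWF_R(U)]_{\mathrm{loc}}=[(S^0,0,0)]_{\mathrm{loc}}$, consistent with (b).

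The main obstacle is not the topology of the cobordisms $W$ — that is routine Mayer--Vietoris — but the equivariant index theory: one must construct the $G$-equivariant Bauer--Furuta map of a $\Z_2$-equivariant spin cobordism so that it lies in $\mathfrak C_G$ with the correct integer and rational degrees; one must verify that $b^+(W)=0$ genuinely forbids desuspension in both the $\widetilde\R$ and the $\C_+$ directions, so that the $H$-fixed-point restriction is \emph{literally} the non-equivariant cobordism map whose homotopy-equivalence is classical; and one must check additivity of the correction terms $n(\Sigma_2(K),\fraks_0,g)$ under equivariant connected sum. All of this is the content of \cite{KMT:2024} (building on \cite{KMT:2021, Ma03}), and the outline above is the skeleton of that argument.
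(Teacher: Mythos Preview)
The paper does not give its own proof of this statement; it is simply quoted from \cite{KMT:2024}. Your outline is essentially the argument carried out there: concordance invariance via the real Bauer--Furuta invariant of the double branched cover of a concordance (a $\Z_2$-homology cobordism with codimension-two fixed set, so \cref{classification:real_spin}(i) applies and $b^+=0$ forces the map to be local), and additivity via the equivariant identification $\Sigma_2(K_0\#K_1)\cong\Sigma_2(K_0)\#\Sigma_2(K_1)$ together with the $1$-handle cobordism and the real analogue of Manolescu's connected-sum formula.

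One correction in part (b): the equivariant $1$-handle in the connected-sum cobordism is \emph{not} attached ``in the free part.'' The downstairs picture is a $1$-handle on $(S^3\sqcup S^3)\times[0,1]$ together with a band joining $K_0\times[0,1]$ to $K_1\times[0,1]$; the double branched cover of this pair is the equivariant cobordism you want, and its fixed-point set is the pair of pants traced out by the band. In particular the $1$-handle upstairs is attached at fixed points and meets the fixed set in an arc. If you instead attached a single $1$-handle at points in the free part (one in each component, swapped by the involution), the induced involution on the outgoing end would have two fixed circles rather than one and you would not obtain $\Sigma_2(K_0\#K_1)$ with its branched-cover involution. With this adjustment your sketch matches the approach of \cite{KMT:2024}.
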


The invariants $\delta_R$, $\underline{\delta}_R$, and $\overline{\delta}_R$ are defined via factorization through $\mathcal{LE}$, meaning that we have functions:
\[
\delta_R, \underline{\delta}_R, \overline{\delta}_R \colon  \mathcal{LE} \to \frac{1}{16} \mathbb{Z}.
\]
See \cite{KMT:2024} for more details. 

To define the real kappa invariant $\kappa_R$, we perform the \emph{doubling construction} in \cite[Section 3.3]{KMT:2021}, which can be described as the functor:
\[
\mathfrak{C}_{G} \to \mathfrak{D}_{G}, 
\]
where $\mathfrak{D}_{G}$ is a category of spectrum whose objects are double in the sense of \cref{double} and morphisms are natural doubled morphisms described in \cite[Section 3.3]{KMT:2021}. 
More detailed explanations are given in \cref{real 10/8 inequalities}. 
The invariant $\kappa_R$ is defined using \emph{$G$-equivariant K-theoretic information} for objects in $\mathfrak{D}_{G}$. See \cite{KMT:2021} for further details.

\section{${\rm Pin}^-(2)$-Monopoles for Homology $S^1 \times S^2$}

In this section, we introduce a \emph{real Floer homotopy type}
\[
SWF_R (Y) \in \mathfrak{C}_{G}
\]
for a given oriented homology $S^1 \times S^2$ manifold $Y$.

\subsection{Real spin structures for homology $S^1 \times S^2$}

Let $Y$ be a homology $S^1 \times S^2$. Then, there exists a unique double cover $\widetilde{Y} \to Y$ determined by the unique surjection
\[
\pi_1(Y) \twoheadrightarrow \mathbb{Z}_2.
\]
Let $\mathfrak{s}_0$ be a spin structure on $Y$, and let $\widetilde{\mathfrak{s}}_0$ be its pullback spin structure on $\widetilde{Y}$. Denote by $\tau \colon \widetilde{Y} \to \widetilde{Y}$ the covering involution.


\begin{lem}
    We have $H^1(\widetilde{Y}; \mathbb{Z}_2) = \mathbb{Z}_2$. In particular, there are exactly two isomorphism classes of real line bundles over $\widetilde{Y}$.
\end{lem}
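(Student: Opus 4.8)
The plan is to pin down $H^1(\widetilde{Y};\mathbb{Z}_2)$ by a direct cohomological computation and then quote the classification of real line bundles by their first Stiefel--Whitney class. Since $\pi_1(Y)\twoheadrightarrow\mathbb{Z}_2$ is surjective, the associated double cover $\widetilde{Y}$ is connected, and it is again a closed oriented $3$-manifold. From $H_\ast(Y;\mathbb{Z})\cong H_\ast(S^1\times S^2;\mathbb{Z})$ and the universal coefficient theorem one gets $H^k(Y;\mathbb{Z}_2)\cong\mathbb{Z}_2$ for $k=0,1,2,3$ (and $0$ otherwise); let $w\in H^1(Y;\mathbb{Z}_2)$ be the class classifying $\widetilde{Y}$, which is the generator since $\widetilde{Y}$ is connected (so $w\neq 0$).

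The crucial observation is that $w^2=0$ in $H^2(Y;\mathbb{Z}_2)$. By the universal coefficient theorem and the fact that $H_1(Y;\mathbb{Z})\cong\mathbb{Z}$ is free, $H^2(Y;\mathbb{Z})\cong\mathbb{Z}$ is torsion-free; moreover the mod $2$ reduction $H^1(Y;\mathbb{Z})\to H^1(Y;\mathbb{Z}_2)$ carries a generator of $H^1(Y;\mathbb{Z})\cong\mathbb{Z}$ to the generator of $H^1(Y;\mathbb{Z}_2)$, so $w$ lifts to some integral class $u\in H^1(Y;\mathbb{Z})$. Graded-commutativity forces $2(u\cup u)=0$ in the torsion-free group $H^2(Y;\mathbb{Z})$, hence $u\cup u=0$, and reducing mod $2$ gives $w^2=0$.

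Now I would run the Gysin sequence of the double cover, whose relevant portion reads
\[
H^0(Y;\mathbb{Z}_2)\xrightarrow{\,\cup w\,}H^1(Y;\mathbb{Z}_2)\xrightarrow{\,\pi^\ast\,}H^1(\widetilde{Y};\mathbb{Z}_2)\xrightarrow{\,\operatorname{tr}\,}H^1(Y;\mathbb{Z}_2)\xrightarrow{\,\cup w\,}H^2(Y;\mathbb{Z}_2).
\]
Here $\cup w\colon H^0\to H^1$ sends $1\mapsto w$ and is an isomorphism, while $\cup w\colon H^1\to H^2$ sends $w\mapsto w^2=0$ and so vanishes. Exactness at the first copy of $H^1(Y;\mathbb{Z}_2)$ then forces $\pi^\ast=0$ there, so exactness at $H^1(\widetilde{Y};\mathbb{Z}_2)$ makes the transfer $\operatorname{tr}$ injective; and exactness at the second copy of $H^1(Y;\mathbb{Z}_2)$ identifies $\operatorname{im}(\operatorname{tr})$ with $\ker(\cup w\colon H^1\to H^2)=H^1(Y;\mathbb{Z}_2)$. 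Hence $\operatorname{tr}$ is an isomorphism and $H^1(\widetilde{Y};\mathbb{Z}_2)\cong H^1(Y;\mathbb{Z}_2)\cong\mathbb{Z}_2$. For the final assertion, isomorphism classes of real line bundles on a space $X$ are classified by $w_1\in H^1(X;\mathbb{Z}_2)$, so over $\widetilde{Y}$ there are precisely $|H^1(\widetilde{Y};\mathbb{Z}_2)|=2$ of them.

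The load-bearing step is the vanishing $w^2=0$: without it the same Gysin sequence would yield $H^1(\widetilde{Y};\mathbb{Z}_2)=0$. So the main point to get right is the torsion-freeness of $H^2(Y;\mathbb{Z})$ together with graded-commutativity of the cup product; the remainder is a routine diagram chase, and the line-bundle statement is then immediate.
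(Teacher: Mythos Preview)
Your proof is correct and uses the same transfer/Gysin long exact sequence for the double cover as the paper does. The difference lies in the key input fed into that sequence: you prove $w^2=0$ directly by lifting $w$ to an integral class and invoking graded commutativity in the torsion-free group $H^2(Y;\mathbb{Z})$, whereas the paper instead asserts $H^1(\widetilde{Y};\mathbb{Z}_2)\neq 0$ (justifiable from $\pi_1(\widetilde{Y})\twoheadrightarrow 2\mathbb{Z}$, though not spelled out there) and combines this with injectivity of the transfer to pin down the group. Your route is more self-contained, and in fact the paper proves the same vanishing $w_1(l)^2=0$ in the very next proposition by a different, geometric argument (representing the generator of $H_2(Y)$ by an embedded surface and building a degree-one map $Y\to S^1$); your algebraic argument via torsion-freeness is a cleaner alternative for that step as well.
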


\begin{proof} 
Recall that the double cover $p \colon \wt{Y} \to Y$ induces a long exact sequence in cohomology, where $\text{tr}^*$ is the transfer map:
\[
\cdots \to H^i(Y; \Z_2) \xrightarrow{p^*} H^i(\wt{Y}; \Z_2) \xrightarrow{\text{tr}^*} H^i(Y; \Z_2) \to H^{i+1}(Y; \Z_2) \to \cdots.
\]
Since \( H^1(\wt{Y}; \Z_2 ) \neq 0 \), this implies in particular that 
\[
H^1(\wt{Y}; \Z_2 ) \xrightarrow{\text{tr}^*} H^1(Y; \Z_2 ) = \Z_2
\qquad \text{ and } \qquad
H^2(Y; \Z_2 ) = \Z_2 \xrightarrow{p^*} H^2(\wt{Y}; \Z_2 ) 
\]
are isomorphisms, which completes the proof.
\end{proof}

\begin{prop}\label{s1s2}
    The 3-manifold $\widetilde{Y}$ with the covering involution $\tau \colon \widetilde{Y} \to \widetilde{Y}$ admits a real spin structure. Moreover, the real spin structure is unique up to isomorphism and sign.
\end{prop}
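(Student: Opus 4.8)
The plan is to deduce both halves of the statement from the existence-and-classification result for real spin structures on free-involutive $3$-manifolds recorded in \cref{classification:real_spiny}(ii), applied with ``$Y$'' there equal to $\widetilde{Y}$ here and quotient $\widetilde{Y}/\tau = Y$; everything then reduces to simple facts about $H^1$ and $H^2$ of $Y$ and $\widetilde{Y}$ with $\mathbb{Z}_2$ coefficients. \emph{For existence}, by \cref{classification:real_spiny}(ii) it suffices to verify $w_1(l)^2 = 0$ in $H^2(Y;\mathbb{Z}_2)$, where $l = \widetilde{Y}\times_{\mathbb{Z}_2}\mathbb{R}\to Y$. The class $w_1(l)$ is the generator of $H^1(Y;\mathbb{Z}_2)\cong\mathbb{Z}_2$, and since $Y$ is a homology $S^1\times S^2$ the reduction $H^1(Y;\mathbb{Z})\to H^1(Y;\mathbb{Z}_2)$ is surjective, so $w_1(l)$ is the reduction of some $\alpha\in H^1(Y;\mathbb{Z})$. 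Graded-commutativity of the cup product gives $\alpha\smile\alpha = -\alpha\smile\alpha$, hence $\alpha\smile\alpha$ is $2$-torsion in $H^2(Y;\mathbb{Z})\cong\mathbb{Z}$ and therefore zero, so $w_1(l)^2 = 0$. Thus $\widetilde{Y}$ admits a real spin structure for $\tau$; fix one, call it $(\mathbb{S}_0,I_0)$, as a reference.

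\emph{For uniqueness}, by the classification part of \cref{classification:real_spiny} every real spin structure on $(\widetilde{Y},\tau)$ arises from $(\mathbb{S}_0,I_0)$ by twisting with a real line bundle $\mathcal{L}\to\widetilde{Y}$ carrying an order-$2$ lift $\widetilde{\tau}$ of $\tau$, producing $(\mathbb{S}_0\otimes\mathcal{L},\pm I_0\otimes\widetilde{\tau})$; so it is enough to show the only such $\mathcal{L}$ is the trivial bundle, on which the order-$2$ lift is unique up to an overall sign already absorbed into the sign of $I$. By the preceding lemma, $H^1(\widetilde{Y};\mathbb{Z}_2)\cong\mathbb{Z}_2$, so there are exactly two real line bundles over $\widetilde{Y}$: the trivial one and a non-trivial one $\mathcal{L}_0$. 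Suppose $\mathcal{L}_0$ admitted an order-$2$ lift $\widetilde{\tau}$ of $\tau$. Since $\tau$ is free, $\widetilde{\tau}$ is a free involution on $\mathcal{L}_0$, so $\mathcal{L}_0/\widetilde{\tau}$ is a real line bundle over $\widetilde{Y}/\tau = Y$ with $\mathcal{L}_0\cong p^*(\mathcal{L}_0/\widetilde{\tau})$, where $p\colon\widetilde{Y}\to Y$ is the covering. But the transfer exact sequence used in the proof of the preceding lemma forces $p^*\colon H^1(Y;\mathbb{Z}_2)\to H^1(\widetilde{Y};\mathbb{Z}_2)$ to vanish (the connecting map $\smile w_1(l)\colon H^0(Y;\mathbb{Z}_2)\to H^1(Y;\mathbb{Z}_2)$ is surjective, being a nonzero map between two copies of $\mathbb{Z}_2$), so every line bundle pulled back from $Y$ is trivial, contradicting $w_1(\mathcal{L}_0)\neq 0$. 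Hence $\mathcal{L}_0$ carries no order-$2$ lift of $\tau$, and the real spin structure on $(\widetilde{Y},\tau)$ is unique up to isomorphism and sign.

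The main obstacle is this last point: showing that the non-trivial real line bundle $\mathcal{L}_0$ over $\widetilde{Y}$ admits no order-$2$ lift of $\tau$, even though $\tau^*\mathcal{L}_0\cong\mathcal{L}_0$ (because $\tau^*$ acts as the identity on $H^1(\widetilde{Y};\mathbb{Z}_2)\cong\mathbb{Z}_2$), so that lifts of $\tau$ to $\mathcal{L}_0$ certainly exist. The descent argument above resolves it by turning an order-$2$ lift into a line bundle on $Y$ that pulls back to $\mathcal{L}_0$, which is impossible since $p^*$ annihilates $H^1(Y;\mathbb{Z}_2)$. The existence half and the remainder of the classification are routine once \cref{classification:real_spiny} and the cohomology of $Y$ are in hand.
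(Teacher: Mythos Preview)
Your proof is correct and follows essentially the same approach as the paper: both reduce existence to $w_1(l)^2=0$ via \cref{classification:real_spiny}, and both handle uniqueness by observing that a line bundle on $\widetilde{Y}$ with an order-$2$ lift of $\tau$ descends to $Y$ and hence pulls back trivially. The only minor difference is in the verification of $w_1(l)^2=0$: the paper constructs a map $Y\to S^1$ inducing an isomorphism on $H^1$ (so the square vanishes since $H^2(S^1)=0$), whereas you lift $w_1(l)$ to an integral class and use graded-commutativity together with $H^2(Y;\mathbb{Z})\cong\mathbb{Z}$ being torsion-free---both are standard and equally valid.
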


\begin{proof}
One can see the unique non-trivial local system $l \colon  \pi_1(Y) \twoheadrightarrow \Z_2$ satisfies 
    \[
     w_1^2 (l) = 0. 
    \]
    This follows from the following general argument: 
Suppose $Y$ is a homology $S^1 \times S^2$. Then, the generator of $H_2(Y)$ is represented by a smoothly embedded oriented surface $F$ in $Y$. Take a normal neighborhood $\nu$ of $F$ in $Y$, which is a trivial bundle over $F$. Thus, we have a map from $\nu \cong F \times I$ to $S^1$, where each $\{x\} \times I$ wraps around $S^1$ exactly once. 
Now, extend this map to all of $Y$ by sending every point outside of $\nu$ to $1 \in S^1$. The induced map $Y \to S^1$ induces an isomorphism on the first cohomology 
$H^1(S^1) \cong H^1(Y)$.
This shows that $a^2 = 0$ for the generator $a \in H^1(Y)$.

Then, from \cref{classification:real_spiny}, we obtain a real spin structure on $\widetilde{Y}$.  Again, from \cref{classification:real_spiny}, the isomorphism classes of real spin structures correspond to real line bundles on $\widetilde{Y}$ with involutive lifts of the involution. However, this construction induces a line bundle on $Y$, and the pullbacks of all line bundles on $Y$ to $\widetilde{Y}$ are trivial. This completes the proof.
\end{proof}

\subsection{Real Floer homotopy type for homology $S^1 \times S^2$}

In this section, we introduce a real Seiberg--Witten Floer homotopy type for homology $S^1 \times S^2$ manifolds. 

Let $Y$ be a homology $S^1 \times S^2$, and let $l$ be the unique non-trivial $\mathbb{Z}_2$-bundle over $Y$. Take a Riemannian metric $g$ on $Y$ and pull it back to $\widetilde{Y}$, denoted by $\widetilde{g}$.  We choose the spin structure $\mathfrak{s}$ on $\widetilde{Y}$ such that the natural lift $\widetilde{\tau}$ of the covering involution has order 4. Let $\mathbb{S}$ denote the spinor bundle for $\mathfrak{s}$. Then, we have an antilinear involution
\[
I \colon \mathbb{S} \to \mathbb{S}
\]
as a real spin structure, which acts on $\Lambda^1_{\widetilde{Y}}(i \mathbb{R})$ by $-\tau^*$. 
Let $A_0$ denote the spin connection with respect to $\widetilde{g}$. This defines an $I$-invariant Chern--Simons--Dirac functional on $\widetilde{Y}$:
\[
CSD \colon C(\widetilde{Y}) =  (A_0 + \Omega^1_{\widetilde{Y}}(i\mathbb{R})) \oplus \Gamma (\mathbb{S}) \to \mathbb{R},  
\]
where $A_0$ is the spin connection. 
We consider the fixed point set with respect to $I$:
\[
CSD \colon C^I(\widetilde{Y}) =  (A_0+ \Omega^1_{\widetilde{Y}}(i\R ))^I \oplus \Gamma^I (\mathbb{S}) \to \mathbb{R}.
\]

The gauge group in this setting is written as 
\[
\mathcal{G}^I(\widetilde{Y}) := \{ g \in  \operatorname{Map}(\widetilde{Y}, U(1) ) \mid \tau^* g = \overline{g} \}.
\]
In this situation, the global slice of $C^I(\widetilde{Y})$ with respect to the action of $\mathcal{G}^I(\widetilde{Y})$ is given by 
\[
V^I (\widetilde{Y}) = (A_0 + \operatorname{Ker} d^*)^{-\tau_* } \oplus \Gamma^I(\mathbb{S})
\]
with a $G = \langle j \rangle$-action.  We take a finite-dimensional approximation 
\[
V^\lambda_\mu \oplus W^\lambda_\mu \subset V^I(\widetilde{Y})
\]
by the sums of certain eigenspaces for $\lambda, -\mu \gg 0$.

Now, we claim a compactness result for the flow lines.

\begin{lem}
    The gradient flow for the restricted vector field  
    \[
    l + p^\lambda_\mu \circ c \colon V^\lambda_\mu \oplus W^\lambda_\mu \to T(V^\lambda_\mu \oplus W^\lambda_\mu)
    \]
    of $CSD$ has a large ball as an isolating neighborhood.
\end{lem}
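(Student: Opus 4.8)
The plan is to reduce the statement to the a priori boundedness of finite-type trajectories of the full Seiberg--Witten flow, and then to invoke the finite-dimensional approximation scheme of \cite{Ma03} in the $G$-equivariant form used in \cite{KMT:2021, KMT:2024}. Concretely: one shows that every finite-type trajectory of the gradient flow of $CSD$ on the global slice $V^I(\widetilde Y)$ lies in a single bounded set, and then quotes the general principle that for $\lambda\gg 0$ and $-\mu\gg 0$ every trajectory of the approximated flow $l + p^\lambda_\mu\circ c$ that stays in a fixed ball of $V^\lambda_\mu\oplus W^\lambda_\mu$ in fact stays in a strictly smaller ball, of radius $R$ independent of $\lambda,\mu$; consequently the closed ball of radius $R$ is an isolating neighborhood. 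So the real content is the infinite-dimensional estimate, where the only new features relative to Manolescu's rational homology sphere case are the real involution $I = j\circ\widetilde\tau$ and the fact that $b_1(\widetilde Y)\neq 0$.

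First I would bound the spinor. For a finite-type trajectory $(A(t),\Phi(t))$ of $CSD$ on $\widetilde Y$, the standard three-dimensional Weitzenb\"ock argument (see \cite[Chapter 4]{kronheimer2007monopoles}) gives a pointwise bound $\|\Phi(t)\|_{L^\infty}\le c_0$, with $c_0$ depending only on a lower bound for the scalar curvature of the pulled-back metric $\widetilde g$. Since $I$ is a bundle isometry covering the $\widetilde g$-isometry $\tau$ and is compatible with Clifford multiplication, this estimate is unaffected on passing to the $I$-invariant configuration space $C^I(\widetilde Y)$, and elliptic bootstrapping in the Coulomb slice then upgrades it to uniform bounds on all Sobolev norms of $\Phi$ along finite-type trajectories.

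Next I would bound the connection. The $I$-invariant slice is $V^I(\widetilde Y) = (A_0 + \ker d^*)^{-\tau_*}\oplus\Gamma^I(\mathbb{S})$, and the only potential source of unboundedness is the reducible locus, whose flat directions are the harmonic $1$-forms in the $(-\tau^*)$-eigenspace, i.e.\ $H^1(\widetilde Y;\mathbb{R})^{-\tau^*}$. The key point is that this vanishes, equivalently $b_1(\widetilde Y) = b_1(Y) = 1$ with the generator of $H^1(\widetilde Y)$ pulled back from $Y$; this is exactly the part of the homology $S^1\times S^2$ structure that makes $SWF_R(Y)$ well-defined. With no flat direction present, the gauge-fixed curvature is controlled pointwise by $|\Phi|^2\le c_0^2$, and the Coulomb condition $d^*(A - A_0)=0$ together with a uniform spectral gap for $l$ transverse to the reducibles bounds $\|A(t)-A_0\|_{L^2_k}$ uniformly along finite-type trajectories. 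Hence all finite-type trajectories of $CSD$ on $V^I(\widetilde Y)$ lie in a fixed bounded set.

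Finally I would run the $G$-equivariant finite-dimensional approximation. The linear part $l$ and the spectral projections $p^\lambda_\mu$ are $G$-equivariant, and $c$ is a $G$-equivariant compact map between the relevant Sobolev completions, so Manolescu's perturbation estimate from \cite[Section 3]{Ma03}, carried out $G$-equivariantly as in \cite{KMT:2021, KMT:2024}, yields the radius bound $R$ described above, uniform in $\lambda$ and $\mu$; this is precisely the claim. I expect the main obstacle to be exactly this uniformity: one must re-derive the control of $(1-p^\lambda_\mu)c$ on bounded subsets of $V^I(\widetilde Y)$ in the $I$-invariant setting, and the ingredient that rules out any loss of compactness near the reducibles is the vanishing $H^1(\widetilde Y;\mathbb{R})^{-\tau^*}=0$, which keeps the reducible set a single compact $G$-orbit.
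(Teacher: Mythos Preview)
Your proposal is correct and takes essentially the same approach as the paper. Both arguments identify the same key fact---that passing to the $I$-invariant configuration space removes the $b_1$-type obstruction to compactness---and then appeal to Manolescu's argument \cite{Ma03} for rational homology spheres. The paper phrases this as the observation that the $I$-invariant gauge group
\[
\mathcal{G}^I(\widetilde Y)=\{g\in\operatorname{Map}(\widetilde Y,U(1))\mid \tau^*g=\bar g\}
\]
is homotopy equivalent to the constant group $\mathbb{Z}_2$, so that the non-compact $H^1(\widetilde Y;\mathbb{Z})$-factor of the full gauge group disappears after taking $I$-invariants; you phrase the same fact on the slice side as the vanishing $H^1(\widetilde Y;\mathbb{R})^{-\tau^*}=0$, which eliminates the harmonic flat directions. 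These two formulations are equivalent. Your write-up is more explicit about the underlying analysis (Weitzenb\"ock bound on the spinor, curvature plus Coulomb gauge for the connection, uniform control of $(1-p^\lambda_\mu)c$), whereas the paper simply records that once the gauge-group observation is made, one is in the rational homology sphere setting and the rest follows verbatim from \cite{Ma03}.
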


 \begin{proof}
    Originally, when defining the Seiberg--Witten Floer homotopy type for homology $S^1 \times S^2$ manifolds, we need to carefully consider the action of the subgroup of the gauge group
    \[
  S^1\times \mathcal{G}_0(Y) \times \Z  = S^1 \times \mathcal{G}_0(Y) \times H^1(Y; \mathbb{Z}) \subset \mathcal{G}_Y
    \]
    to ensure the usual compactness of the critical point sets and flow lines, where $\mathcal{G}_0(Y)$ denotes the contractible identity component of the gauge group. 

    In our case, since we are considering the restricted $I$-invariant gauge group
    \[
    \mathcal{G}^I(\widetilde{Y}) = \{ g \in \operatorname{Map} (\widetilde{Y}, U(1)) \mid \tau^* g = \overline{g} \}
    \]
    which is homotopy equivalent to the constant gauge group $\mathbb{Z}_2$, we can ignore the action of such a non-compact group after taking the $I$-invariant part. 

    For this reason, on the $I$-fixed point set, we obtain the same compactness properties for the critical point set and flow lines as in the case of Seiberg--Witten Floer homotopy for rational homology 3-spheres.  Thus, the lemma follows from essentially the same argument as in \cite{Ma03}.
\end{proof}

 Recall that $G= \langle j \rangle$. 
Now, we follow Manolescu's construction to obtain the Seiberg--Witten Floer homotopy type and define the $G$-equivariant Conley index
\[
I^{\lambda}_\mu (\widetilde{Y}, g)
\]
for $\lambda$ and $\mu$. 
Then, we define
\begin{align*}
SWF_R(Y) &:= \Sigma^{- (V^0_\mu)^I - (W^0_\mu)^I } \Sigma^{\C_+^{n(\widetilde{Y}, g)}} I^{\lambda}_\mu (\widetilde{Y}, g) \\
&= \left[\left(N^I/L^I,\,
\dim_{\mathbb{R}}(V^0_{-\lambda})^I,\,
\dim_{\mathbb{C}}(W^0_{-\lambda})^I + \frac{n(\widetilde{Y}, \mathfrak{t}, g)}{2} \right)\right] \in \mathfrak{C}_{G},
\end{align*}
where the correction term $n(\widetilde{Y}, g)$ is defined by
\[
n(\widetilde{Y}, g) := \operatorname{ind}_{\mathbb{C}}(D_{\widetilde{g}}) + \frac{c_1(\mathfrak{s})^2 - \sigma(X)}{8},
\]
and $X$ is any real spin 4-manifold bounding $\widetilde{Y}$ such that there exists a real spin structure $\mathfrak{s}'$ on $X$ that restricts to the given real spin structure on $\widetilde{Y}$. 

Let $\widetilde{g}$ be a $\mathbb{Z}/2$-equivariant metric on $X$ that coincides with $g$ on $\widetilde{Y}$. Let $D_{\widetilde{g}}$ be the Dirac operator on $\mathfrak{s}'$ with respect to the metric $\widetilde{g}$. We can construct such an $X$ for any $\widetilde{Y}$ as follows: let $l$ be a loop in $\widetilde{Y}$ representing the generator of $H_1(Y, \mathbb{Z}) \cong \mathbb{Z}$. Define the 4-manifold
\[
X:= [0,1] \times \widetilde{Y} \cup_{\{1\} \times \nu(l) = D^2 \times \partial \widetilde{D}^2} D^2 \times \widetilde{D}^2,
\]
where $\widetilde{D}^2$ is the 2-disc $\{z \in \mathbb{C} \mid \lvert z \rvert \leq 1\}$ equipped with the involution $z \mapsto -z$. One can easily check that $X$ satisfies the conditions above. Since the $\mathbb{R}$-spectral flow of a family of real Dirac operators is equal to half the $\mathbb{R}$-spectral flow of the corresponding family of the usual Dirac operators, the class $SWF_R(Y)$ is independent of the choice of metric, as an element of $\mathfrak{C}_G$. This, in particular, implies the following:

\begin{prop}
The $G$-equivariant stable homotopy class of $SWF_R(Y)$ is independent of the choices of $g$, $\lambda$, and $\mu$. \qed
\end{prop}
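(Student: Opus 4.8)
The plan is to transcribe Manolescu's invariance arguments for the Seiberg--Witten Floer homotopy type of rational homology $3$-spheres from \cite{Ma03}, which apply essentially verbatim here: by the preceding lemma, once we pass to the restricted gauge group $\mathcal{G}^I(\widetilde{Y})$ (homotopy equivalent to $\mathbb{Z}_2$), the $I$-fixed critical set and flow lines of the finite-dimensional approximation enjoy exactly the compactness available in the rational homology sphere case. I would organize the proof into three parts: invariance under $(\lambda,\mu)$ for fixed $g$, invariance under $g$, and well-definedness of the correction term $n(\widetilde{Y},g)$.

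\emph{Invariance under $(\lambda,\mu)$.} Enlarging $V^\lambda_\mu \oplus W^\lambda_\mu$ to $V^{\lambda'}_{\mu'} \oplus W^{\lambda'}_{\mu'}$ with $\lambda'\ge\lambda$, $\mu'\le\mu$ appends a linear $G$-representation formed from eigenspaces of $l$: the new form eigenspaces carry the $-\tau_*$-action on $\ker d^*$ and contribute copies of $\widetilde{\mathbb{R}}$, while the new spinor eigenspaces, with the $j$-complex structure on $\Gamma^I(\mathbb{S})$, contribute copies of $\mathbb{C}_+$. The attractor--repeller decomposition for the linearly extended flow then identifies $I^{\lambda'}_{\mu'}(\widetilde{Y},g)$ $G$-equivariantly with the smash of $I^{\lambda}_{\mu}(\widetilde{Y},g)$ and the one-point compactification of the appended representation; since the definition of $SWF_R(Y)\in\mathfrak{C}_{G}$ desuspends by exactly $(V^0_\mu)^I\oplus(W^0_\mu)^I$ and records the resulting $\mathbb{C}_+$-shift, the object of $\mathfrak{C}_{G}$ is unchanged. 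The structural conditions on objects of $\mathfrak{C}_{G}$ persist because the reducible locus consists only of forms and is precisely the $H$-fixed set, on which $G$ acts via the representation $\widetilde{\mathbb{R}}$.

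\emph{Invariance under $g$, and well-definedness of $n$.} Join $g_0$ and $g_1$ by a smooth path $(g_t)$ and arrange that the eigenvalues of $l=l_{g_t}$ cross the levels $\lambda,\mu$ transversally and one at a time. Between crossings the $G$-equivariant Conley index is constant up to the canonical continuation equivalence, and at each crossing the approximation space and the index change by a single $\widetilde{\mathbb{R}}$ (a form eigenspace) or $\mathbb{C}_+$ (a spinor eigenspace), absorbed by the recorded desuspension exactly as above. The only new ingredient is that $n(\widetilde{Y},g_t)$ varies: its variation along the path equals the $\mathbb{R}$-spectral flow of the family of $I$-invariant (real) Dirac operators, which by the cited identity is half the $\mathbb{R}$-spectral flow of the ordinary Dirac operators, and this is exactly the net number of $\mathbb{C}_+$ summands entering or leaving $W$. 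Hence the recorded $\mathbb{C}_+$-coefficient $\dim_{\mathbb{C}}(W^0_\mu)^I+\frac{1}{2}n(\widetilde{Y},\mathfrak{t},g)$ is constant along the path, so the class in $\mathfrak{C}_{G}$ does not change. That $n(\widetilde{Y},g)$ itself does not depend on the auxiliary real spin $4$-manifold $X$ follows by gluing two choices $X,X'$ into a closed real spin manifold $Z=X\cup_{\widetilde{Y}}(-X')$: both $\operatorname{ind}_{\mathbb{C}}(D)$ and $c_1(\mathfrak{s})^2-\sigma$ are additive under this gluing, and $\operatorname{ind}_{\mathbb{C}}(D_Z)+\frac{1}{8}(c_1(\mathfrak{s}_Z)^2-\sigma(Z))=0$ by the index theorem for closed spin $4$-manifolds.

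The main obstacle is the equivariant bookkeeping in the metric-invariance step: one must check that the representation appended to the approximation space when a spinor eigenvalue crosses is precisely $\mathbb{C}_+$ (rather than $\widetilde{\mathbb{C}}$, $\mathbb{C}_-$, or a mixture), and that the half-spectral-flow identity matches that count. This uses that $I$ is anti-complex linear, commutes with $j$ on $\Gamma^I(\mathbb{S})$, and acts by $-\tau_*$ on harmonic forms, so that $\Gamma^I(\mathbb{S})$ is genuinely a $\mathbb{C}_+$-module; once this identification is pinned down the remainder is a routine transcription of \cite{Ma03}.
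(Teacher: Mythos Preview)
Your proposal is correct and follows essentially the same approach as the paper. The paper treats this proposition as immediate (marking it with \qed), having noted just beforehand that the $\mathbb{R}$-spectral flow of real Dirac operators equals half that of the usual Dirac operators, and relying implicitly on the preceding lemma to reduce the remaining arguments to Manolescu's rational homology sphere case \cite{Ma03}; your write-up simply fleshes out those steps in detail.
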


\begin{rem}
    Although we focus on homology $S^1 \times S^2$ manifolds, our real Floer homotopy type can be defined for any real spin$^c$ or spin 3-manifold $(Y, \tau, \mathfrak{s}, I)$ satisfying
\[
b_1(Y) - b_1(Y/ \tau) = \dim H^1(Y; \mathbb{Q})^{-\tau_*} = 0.
\]
When we have a real spin structure, the corresponding Floer homotopy type
\[
SWF_R (Y, \tau, \mathfrak{s}, I)
\]
lies in $\mathfrak{C}_{G}$. In contrast, for a real spin$^c$ structure, we only obtain a well-defined $\mathbb{Z}_2$-action on the spectrum.
\end{rem}

\subsection{Cobordism maps between homology $S^1\times S^2$ manifolds}

Let $Y$ and $Y'$ be homology $S^1 \times S^2$ manifolds. Let $W$ be an oriented connected cobordism from $Y$ to $Y'$, and let $l$ be a real line bundle satisfying the following properties:  
\begin{itemize}
    \item[(i)] $l|_{Y}$ and $l|_{Y'}$ are non-trivial real line bundles.  
    \item[(ii)] $w_2(W) + w_1(l)^2 = 0$.
\end{itemize}

\begin{prop}
    For the cobordism $(W, l)$, there exists a $Spin^-(4)$ structure associated with $l$. In particular, the double cover $\widetilde{W}$ corresponding to $l$ admits a real spin structure.
\end{prop}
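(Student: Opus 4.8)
The plan is to construct the desired $\mathrm{Spin}^-(4)$-structure directly from the defining obstruction-theoretic characterization of $\mathrm{Spin}^-$ structures, which is exactly the content of the second bullet of \cref{classification:real_spin} (ii), applied to the double cover $\widetilde{W}$, and to verify that the hypotheses on $(W,l)$ are precisely what is needed. Recall that a $\mathrm{Spin}^-(4)$-structure on $(W,l)$ is, by definition, a reduction of the structure group to $\mathrm{Spin}(4)\times_{\pm1}\langle j\rangle$ together with the identification of the associated $O(1)$-bundle with $l$; the obstruction to the existence of such a reduction is $w_2(W)+w_1(l)^2\in H^2(W;\mathbb{Z}_2)$, so hypothesis (ii) says exactly that this obstruction vanishes. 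Thus the first sentence of the proposition follows immediately from the definition together with the obstruction-theoretic discussion preceding \cref{classification:real_spin}; I would simply cite \cite{Na13} (Proposition 3.4) for the $4$-dimensional statement, noting that it is the relative/cobordism analogue of \cref{lem:spinc4manifold} (second bullet) with the $O(2)$-bundle $E$ replaced by the $O(1)$-bundle determined by $l$.

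For the ``in particular'' clause, I would invoke \cref{equivalence_defn_spin}: since $l|_Y$ and $l|_{Y'}$ are the non-trivial $\mathbb{Z}_2$-bundles over $Y$ and $Y'$, the associated double cover $\widetilde{W}\to W$ is the regular double cover with covering involution $\tau$ restricting on the ends to the covering involutions of $\widetilde{Y}\to Y$ and $\widetilde{Y}'\to Y'$ from \cref{s1s2}. By \cref{equivalence_defn_spin} (extended to manifolds with boundary, which is the setting of \cref{classification:real_spin}), a $\mathrm{Spin}^-(4)$-structure on $(W,l)$ is the same data as a real spin structure on $(\widetilde{W},\tau)$. Hence the $\mathrm{Spin}^-(4)$-structure produced in the first part pulls back (equivalently, corresponds) to a real spin structure on $\widetilde{W}$, and moreover one that restricts on the two boundary components to the real spin structures on $\widetilde{Y}$ and $\widetilde{Y}'$ guaranteed by \cref{s1s2} --- this compatibility is what makes $(\widetilde{W},\mathrm{real\ spin})$ a genuine real spin cobordism, which is what subsequent sections will need.

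I would organize the write-up as: (1) identify the obstruction class and observe hypothesis (ii) kills it, giving a $\mathrm{Spin}^-(4)$-structure on $(W,l)$ by the relative version of \cite[Prop.\ 3.4]{Na13}; (2) use hypothesis (i) to see that the double cover $\widetilde W$ associated to $l$ is connected with covering involution restricting correctly to the ends; (3) translate via \cref{equivalence_defn_spin} to get the real spin structure on $\widetilde W$. The only real subtlety --- and the step I expect to require the most care --- is making sure the equivalence of \cref{equivalence_defn_spin} is genuinely available in the relative (manifold-with-boundary) setting and that the resulting real spin structure on $\widetilde{W}$ restricts to the prescribed ones on $\widetilde Y$ and $\widetilde Y'$ rather than to some other, possibly non-isomorphic, real spin structure; here I would appeal to \cref{lem:3manifoldtimesI} and the uniqueness-up-to-sign statement in \cref{classification:real_spiny} and \cref{s1s2} to pin down the boundary restrictions. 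Everything else is a direct transcription of the $3$- and $4$-dimensional existence lemmas already established, so the proof will be short.
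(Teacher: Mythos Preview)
Your proposal is correct and follows essentially the same approach as the paper: both reduce the existence of a $\mathrm{Spin}^-(4)$ structure to the vanishing of the obstruction class $w_2(W)+w_1(l)^2$, citing Nakamura's obstruction-theoretic argument \cite{Na13} and noting that it extends to manifolds with boundary. The paper's own proof is in fact terser than yours---it handles only the first sentence and leaves the ``in particular'' clause implicit---whereas you explicitly invoke \cref{equivalence_defn_spin} to pass to the real spin structure on $\widetilde{W}$ and discuss boundary compatibility via \cref{s1s2} and \cref{classification:real_spiny}; this extra care is not needed for the proposition as stated but is exactly what the subsequent construction of $BF^R_W$ uses.
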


\begin{proof}
    This corresponds to the boundary version of \cref{classification:real_spin}. The original proof by Nakamura \cite{Na13}, which uses obstruction theory, remains valid in the case of 4-manifolds with boundary. Therefore, the condition $w_2(W) + w_1(l)^2 = 0$ is sufficient to obtain a $Spin^-(4)$ structure associated with $l$.
\end{proof}

Moreover, we also suppose 
\[
b_1(W)- b_1(W/\tau) =0
\]
for simplicity in this section.  For this cobordism $(W, l)$, we claim that it induces the following cobordism map:

\begin{thm}\label{realBF}
    There exists a $G$-equivariant map
    \[
    BF^R_W \colon SWF_R(Y) \wedge (\widetilde{\mathbb{R}}^{\frac{- \sigma(W)}{8}})^+ \to SWF_R(Y') \wedge (\mathbb{R}^{b^+(W; l)})^+.
    \]
    Here, $b^+(W; l)$ denotes the dimension of a maximal positive definite subspace of $H^2(W; l)$ with respect to the intersection form of the local coefficient cohomology:
    \[
    H^2(W; l) \otimes H^2(W; l) \to \mathbb{R}.
    \]
    The quantity $\sigma(W)$ refers to the signature of the intersection form restricted to the image of
    \[
    \operatorname{im}(H^2(W, \partial W) \to H^2(W)).
    \]
    In particular, if $b^+(W; l) = 0$, we obtain a local map from $SWF_R(Y)$ to $SWF_R(Y')$.
\end{thm}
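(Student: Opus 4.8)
The plan is to realise $BF^R_W$ as the $I$-fixed-point part of a finite-dimensional approximation of the Seiberg--Witten map on the double cover $\widetilde{W}$, adapting the relative Bauer--Furuta construction of Manolescu and its $\mathrm{Pin}(2)$-monopole refinements to the real setting. The one genuinely new point is that passing to the $I$-fixed subspace commutes with finite-dimensional approximation and with the Conley index, so that once the compactness estimates used to define $SWF_R$ are available on the cylindrical ends, the construction goes through essentially verbatim.

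First I would equip $\widetilde{W}$ with the real spin structure furnished by the proposition preceding the theorem, choose a $\mathbb{Z}_2$-equivariant Riemannian metric on $W$ that is cylindrical near $\partial W$ and restricts to the metrics already fixed on $\widetilde{Y}$ and $\widetilde{Y}'$, and form the configuration space of $L^2_k$ connection--spinor pairs on $\widetilde{W}$ with exponential decay along the ends. Taking the $I$-invariant subspace and imposing the $I$-invariant global Coulomb slice (as in the definition of $V^I(\widetilde{Y})$ above), one writes the gauge-fixed Seiberg--Witten map on $\widetilde{W}$ in the form $L + c$, where $L$ is the linear Fredholm operator obtained by coupling the anti-self-duality operator $d^+ \oplus d^*$ to the Dirac operator $D_{\widetilde{g}}$ under Atiyah--Patodi--Singer boundary conditions determined by the spectral decompositions on the ends, and $c$ is the quadratic --- hence $L^2_k \to L^2_{k-1}$ compact --- nonlinearity. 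The analytic heart is a uniform $C^0$ bound for $I$-invariant finite-energy solutions on $\mathbb{R}\times\widetilde{Y}$ and on $\widetilde{W}$ with bounded boundary energy; the hypotheses $b_1(Y) - b_1(Y/\tau) = 0$ and $b_1(W) - b_1(W/\tau) = 0$ are exactly what is needed here, since after taking $I$-invariants the residual gauge group $\mathcal{G}^I(\widetilde{Y})$ is homotopy equivalent to $\mathbb{Z}_2$ and the non-compact $H^1$-directions of the full gauge group, which cause the usual complications for homology $S^1\times S^2$, disappear. With these bounds in place, the arguments that establish compactness of moduli for $SWF_R$ give compactness of the relevant solution spaces on $\widetilde{W}$.

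Next I would take finite-dimensional approximations: fix $\lambda, -\mu \gg 0$, project $c$ onto the span of eigenvectors of the boundary operators below the cutoff, and study the induced map of one-point compactifications. The linear operator $L$ is responsible for the suspension shifts in the statement: its reducible (form) part, restricted to the $I$-fixed subspace, sees precisely the local-coefficient cohomology $H^2(W; l)$, and a maximal positive-definite subspace of dimension $b^+(W; l)$ produces the target factor $(\mathbb{R}^{b^+(W;l)})^+$, while its Dirac part has real index equal to half the complex Dirac index of $\widetilde{W}$ (cf.\ the remark on $\mathbb{R}$-spectral flow in the previous subsection) and, after the piece already absorbed into the correction term $n(\widetilde{Y}, g)$ is removed, leaves the net shift recorded by the source factor $(\widetilde{\mathbb{R}}^{-\sigma(W)/8})^+$; we follow the representation-theoretic bookkeeping of \cite{Ma03, KMT:2021}. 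Using the uniform $C^0$ bound one checks that the approximated Seiberg--Witten map carries the boundary of a large ball off a neighbourhood of the base point, hence descends to a based map of the relevant $G$-equivariant Conley indices, and standard continuation and neck-stretching arguments show it is well defined up to $G$-equivariant stable homotopy and independent of $\lambda$, $\mu$, the metric, and the interior perturbations. Equivariance for $G = \langle j \rangle$ is automatic because $j$ commutes with $I$ and with every piece of data used; and when $b^+(W; l) = 0$ the target sphere factor is a point, so $BF^R_W$ is a local map in the earlier sense.

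The main obstacle is the compactness analysis on the cobordism in the equivariant real setting: one must control $I$-invariant finite-energy solutions on $\widetilde{W}$ with APS boundary conditions, verify that the only reducibles escaping to the base point under finite-dimensional approximation are those accounted for by the $b^+(W; l)$-factor, and match the $\widetilde{\mathbb{R}}$-type (form) and $\mathbb{C}_+$-type (spinor) summands of the approximations to the shifts claimed in the theorem. Granting the estimates already needed for $SWF_R(Y)$, these are adaptations of well-documented arguments --- Manolescu's for the Conley-index and Bauer--Furuta formalism, the $\mathrm{Pin}(2)$-monopole literature for the twisted-coefficient index computations, and Nakamura's obstruction-theoretic treatment of $\mathrm{Spin}^-$ structures for the bounding $4$-manifold --- so the essential content is the compatibility of $I$-fixed-point taking with all of these.
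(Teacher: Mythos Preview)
Your proposal is correct and follows essentially the same route as the paper's own proof sketch: take the $I$-invariant part of the Seiberg--Witten map on the double cover $\widetilde{W}$, apply finite-dimensional approximation \`a la Manolescu, and read off the suspension shifts from the APS index of the linearisation. The paper's proof is itself only a sketch that cites Nakamura's $\mathrm{Pin}^-$ construction and the relative Bauer--Furuta machinery of \cite{KMT:2024, Mi23}, and explicitly uses Khandhawit's double Coulomb slice $\Omega^1_{CC}(W)$ rather than the global Coulomb slice you mention, but this is a technical variant rather than a different idea; your treatment of the compactness issue (via the vanishing of $H^1(-)^{-\tau^*}$ killing the noncompact gauge directions) is in fact more explicit than the paper's.

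One small bookkeeping point worth flagging: in your third paragraph you attribute the source factor $(\widetilde{\mathbb{R}}^{-\sigma(W)/8})^+$ to the Dirac part of $L$, but in the paper's index computation the $\widetilde{\mathbb{R}}$-type shifts $m_0 - m_1$ come from the form (AHS) operator and record $-b^+(W) + b^+_\tau(W)$, while the $\mathbb{C}_+$-type shifts $n_0 - n_1$ come from the Dirac operator and carry the signature term. This does not affect the correctness of the construction, but you should double-check the representation labels when matching your shifts to the statement.
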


\begin{proof}
    We give a sketch of the proof. The proof is analogous to Nakamura's construction \cite{Na13} and the construction of the Bauer--Furuta map described in \cite{KMT:2024, Mi23}.

Let $\mathbb{S}^{\pm}$ denote the positive and negative spinor bundles on $\widetilde{W}$, and let $\mathbb{S}$ and $\mathbb{S}'$ be the spinor bundles on $\widetilde{Y}$ and $\widetilde{Y}'$, where $\widetilde{Y}$ and $\widetilde{Y}'$ are the double covers of $Y$ and $Y'$ associated with $l|_Y$ and $l|_{Y'}$, respectively.  We take a real spin structure
\[
I \colon \mathbb{S}^{\pm} \to  \mathbb{S}^{\pm}.
\]
Here, we consider the Sobolev norms $L^2_k$ for the spaces $\Omega^\ast(W)$ and $\Gamma(\mathbb{S}^{\pm})$, obtained from $\tau$-invariant metrics and $\tau$-invariant connections, for a fixed integer $k \geq 3$. We define the Seiberg--Witten map on $\widetilde{W}$ as a finite-dimensional approximation of the map
\begin{align}
    \label{eq: SW map}
    SW \colon \Omega_{CC}^1(W) \times \Gamma(\mathbb{S}^+)
    \to \Omega^+(W) \times \Gamma(\mathbb{S}^-) \times \hat{V}(-Y)^{\mu}_{-\infty} \times \hat{V}(Y')^{\mu}_{-\infty}
\end{align}
for large $\mu$. The following gives precise definitions of the notations appeared in \eqref{eq: SW map}. 
\begin{itemize}
\item 
The notation 
\[
\Omega_{CC}^1(W) 
=\Set{ a \in \Omega^1(W) \mid d^{\ast} a=0, d^{\ast} \mathbf{t} a=0, \int_{Y} \mathbf{t} \ast a=0, \int_{Y'} \mathbf{t} \ast a=0 }
\]
is the space of $1$-forms satisfying the double Coulomb condition, introduced by Khandhawit in \cite{Khan15}, where $*$ denotes the Hodge star operator and $\mathbf{t}$ is the restriction as a differential form. 
\item For a general rational homology 3-sphere $Y$ equipped with a spin$^c$ structure, $\hat{V}(Y, \mathfrak{t})^\mu_{-\infty}$ is a subspace of 
\[
\hat{V}(Y, \mathfrak{t}) = \operatorname{Ker} d^* \times \Gamma(\mathbb{S})
\]
defined as the direct sum of eigenspaces whose eigenvalues are less than $\mu$ with respect to the operator $l= (*d, D_{A_0} )$ for the spin connection $A_0$.
\end{itemize}
The $\Omega^+(W) \times \Gamma(\mathbb{S}^-)$ factor of the map $SW$ corresponds to the Seiberg--Witten equations, while the $\hat{V}(-Y)^{\mu}_{-\infty} \times \hat{V}(Y')^{\mu}_{-\infty}$ factor is, roughly, the restriction of 4-dimensional configurations to 3-dimensional ones. 
One can verify this map SW is $I$-equivariant. 
As in the case of 3-manifolds, we first take the $I$-invariant part of the Seiberg--Witten equation:
\begin{align}
    \label{eq: SW map I}
    SW^I  \colon (\Omega_{CC}^1(W) \times \Gamma(\mathbb{S}^+))^I 
    \to (\Omega^+(W) \times  \Gamma(\mathbb{S}^-) \times \hat{V}(-Y)^{\mu}_{-\infty} \times \hat{V}(Y')^{\mu}_{-\infty} )^I.
\end{align}
Again, one can check a certain compactness to obtain a finite-dimensional approximation of the map \eqref{eq: SW map I} as in the usual Seiberg--Witten theory \cite{Ma03, Khan15}.  Taking a finite-dimensional approximation of this,
we obtain a $G$-equivariant map of the form
\begin{align*}
f \colon \Sigma^{m_0\tilde{\R}}\Sigma^{n_0 \mathbb{C}_+} I_{-\mu}^{-\lambda} (Y) \to \Sigma^{m_1\tilde{\R}}\Sigma^{n_1 \mathbb{C}_+ }I^\mu_\lambda (Y'),
\end{align*}
where $m_i, n_i^\pm \geq 0$ and $-\lambda, \mu$ are sufficiently large. Then, $m_0 - m_1$ and $n_0 - n_1$ can be computed as the indices of certain operators with APS boundary conditions, given by:
\begin{align*}
m_0 - m_1 ={}& \dim_\mathbb{R}\left(V(Y)^0_{\lambda}\right) - \dim_\mathbb{R}\left(V(Y')^0_{-\mu}\right) - b^+(W) + b^+_\tau(W), \\
\begin{split}
n_0 - n_1 ={}& \dim_\mathbb{C}\left(W(Y)^0_{\lambda}\right) - \dim_\mathbb{C}\left(W(Y')^0_{-\mu}\right) \\
&- \frac{1}{2} \left( \frac{c_1(\mathfrak{s})^2 - \sigma(W)}{8} + n(Y', \mathfrak{t}', g') - n(Y, \mathfrak{t}, g) \right), 
\end{split}
\end{align*}
where the vector spaces $V(Y)^0_{\lambda}$, $W(Y)^0_{\lambda}$, $V(Y')^0_{\lambda}$, and $W(Y')^0_{\lambda}$ denote the $I$-invariant parts of 1-forms and spinors on the global slices:
\begin{align*}
    V(Y)^0_{\lambda} \oplus W(Y)^0_{\lambda} &= \left(\hat{V}(Y)^0_{\lambda}\right)^I, \\
    V(Y')^0_{\lambda} \oplus W(Y')^0_{\lambda} &= \left(\hat{V}(Y')^0_{\lambda}\right)^I.
\end{align*}This completes the proof.
\end{proof}

Based on this, we introduce the following class of cobordisms:
\begin{defn}
Let $Y$ and $Y'$ be oriented homology $S^1 \times S^2$ manifolds. We say that $Y$ and $Y'$ are \emph{real homology cobordant} if there exists a smooth $\mathbb{Z}_2$-homology cobordism $W$ from $Y$ to $Y'$ such that the regular non-trivial double cover $\widetilde{W}$ of $W$ satisfies
\[
b_2(\widetilde{W}, \widetilde{Y}) = b_2(\widetilde{W}, \widetilde{Y}') = 0,
\]
where $\widetilde{Y}$ and $\widetilde{Y}'$ denote the regular non-trivial double covers of $Y$ and $Y'$, respectively.
\end{defn}

The local equivalence classes of homology $S^1 \times S^2$ manifolds depend only on their real homology cobordism classes:
\begin{prop}\label{R-cob}
    If $Y$ and $Y'$ are real homology cobordant, then 
    $SWF_R(Y)$ and $SWF_R(Y')$ are locally equivalent.
\end{prop}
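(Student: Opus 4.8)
The plan is to build local maps in both directions from the real Bauer--Furuta invariants of \Cref{realBF}, applied to the real homology cobordism $W\colon Y\to Y'$ and to its orientation reversal $\overline W\colon Y'\to Y$, and then to check that each induces a $G$-homotopy equivalence on $H$-fixed point sets. Since local equivalence of objects of $\mathfrak{C}_G$ only requires a pair of maps in opposite directions inducing $G$-homotopy equivalences on $H$-fixed sets, this suffices.

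\textbf{Step 1: the cobordism maps are defined.} Equip $W$ with the real line bundle $l$ restricting to the non-trivial real line bundles on $Y$ and $Y'$. As $W$ is a $\mathbb{Z}_2$-homology cobordism, $H^*(W,Y;\mathbb{Z}_2)=0$, so restriction gives $H^1(W;\mathbb{Z}_2)\cong H^1(Y;\mathbb{Z}_2)=\mathbb{Z}_2$ and $H^2(W;\mathbb{Z}_2)\cong H^2(Y;\mathbb{Z}_2)$; hence $l$ is the unique non-trivial $\mathbb{Z}_2$-line bundle, restricts non-trivially to both ends, and $w_2(W)=w_1(l)^2=0$ (both classes vanish on $Y$, where $w_1(l)^2=0$ by the argument in the proof of \Cref{s1s2}). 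So hypotheses (i), (ii) of \Cref{realBF} hold. Next I evaluate the numerical quantities. A $\mathbb{Z}_2$-homology cobordism is a $\mathbb{Q}$-homology cobordism (by universal coefficients $H_*(W,Y;\mathbb{Z})$ is all odd torsion), so $H^2(W,\partial W;\mathbb{Q})=0$ and therefore $\sigma(W)=0$. The hypothesis $b_2(\widetilde W,\widetilde Y)=0$ forces $H^2(\widetilde W,\widetilde Y;\mathbb{Q})=0$, and an Euler-characteristic count (using $\chi(\widetilde W,\widetilde Y)=\chi(\widetilde W)=2\chi(W)=0$) then gives $H_*(\widetilde W,\widetilde Y;\mathbb{Q})=0$, i.e.\ $\widetilde W$ is a rational homology cobordism from $\widetilde Y$ to $\widetilde Y'$. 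Taking $(-\tau^*)$-eigenspaces of $H^2(\widetilde W,\widetilde Y;\mathbb{Q})=0$ yields $H^2(W;l)\otimes\mathbb{Q}=0$, so $b^+(W;l)=0$; and comparing first Betti numbers gives $b_1(\widetilde W)=b_1(\widetilde Y)=1=b_1(W)$ (the equality $b_1(\widetilde Y)=1$ being part of the well-definedness of $SWF_R(Y)$), so the auxiliary condition $b_1(W)-b_1(W/\tau)=0$ of \Cref{realBF} holds. Thus \Cref{realBF} produces $G$-equivariant morphisms $BF^R_W\colon SWF_R(Y)\to SWF_R(Y')$ and $BF^R_{\overline W}\colon SWF_R(Y')\to SWF_R(Y)$ in $\mathfrak{C}_G$, with no extra sphere factors; the same vanishings hold for $\overline W$ since the real homology cobordism condition is symmetric in $Y$ and $Y'$.

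\textbf{Step 2: the maps are local.} On the $H=\{\pm1\}$-fixed locus the spinor coordinates vanish, so $SWF_R(Y)^H$ is the one-point compactification of a finite-dimensional approximation of the $I$-invariant reducible $1$-form part of the global slice, a sphere $(\widetilde{\mathbb{R}}^{s(Y)})^+$ as in the definition of $\mathfrak{C}_G$. Under this identification, the restriction of $BF^R_W$ is the finite-dimensional approximation of the reducible, \emph{linear} Seiberg--Witten map of $W$ with the double-Coulomb and APS boundary conditions of \Cref{realBF} -- a model of the operator $d^+\oplus(\text{boundary restriction})$ on $l$-twisted forms over $W$. The index computation in the proof of \Cref{realBF} shows its source and target $\widetilde{\mathbb{R}}$-spheres have equal dimension once $\sigma(W)=0$, $b^+(W;l)=0$ and $b_1(W)-b_1(W/\tau)=0$; and under those vanishings the underlying linear operator is an isomorphism, so the map has degree $\pm1$. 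A based $\mathbb{Z}_2=G/H$-equivariant map between $\widetilde{\mathbb{R}}$-spheres of equal dimension with non-zero degree is a $\mathbb{Z}_2$-homotopy equivalence (it restricts to the identity on the $\mathbb{Z}_2$-fixed $S^0$ and has degree $\pm1$ overall). Applying this to both $BF^R_W$ and $BF^R_{\overline W}$ exhibits $SWF_R(Y)$ and $SWF_R(Y')$ as locally equivalent.

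\textbf{The main obstacle} is Step 2: one must carefully identify the restriction of the real Bauer--Furuta map to the $H$-fixed set with the linear reducible part of the construction in \Cref{realBF}, and track the correction terms $n(\widetilde Y,g)$ and the APS index contributions so that the source and target $\widetilde{\mathbb{R}}$-spheres genuinely have equal dimension, before extracting the degree-$\pm1$ statement from invertibility of the linearized operator. This index bookkeeping parallels the proof of \Cref{realBF}, but it must be combined with the homology-cobordism vanishing statements of Step 1 to upgrade ``a map'' to ``an equivalence''.
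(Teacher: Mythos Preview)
Your proof is correct and follows the same approach as the paper: apply \Cref{realBF} to the real homology cobordism $W$ and to $-W$, and conclude that the resulting Bauer--Furuta maps are local. Your Step~1 verifications (extension of $l$, $w_2(W)+w_1(l)^2=0$, $\sigma(W)=0$, $b^+(W;l)=0$, $b_1(\widetilde W)=b_1(W)$) are exactly what the paper checks, only you give more detail.

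The one point worth flagging is that your Step~2, and the ``main obstacle'' you single out, are unnecessary. The last sentence of \Cref{realBF} already asserts that when $b^+(W;l)=0$ the map $BF^R_W$ is a local map; the degree-$\pm1$ argument on the $H$-fixed spheres that you sketch is precisely what underlies that clause, but you can (and the paper does) simply cite it. So once Step~1 establishes $\sigma(W)=0$ and $b^+(W;l)=0$, you are done.
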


\begin{proof}
    Since $W$ is a $\mathbb{Z}_2$-homology cobordism, the non-trivial line bundles on $Y$ and $Y'$ extend uniquely to $W$, which we denote by $l$. On the boundaries $Y$ and $Y'$, we have real spin structures on $\widetilde{Y}$ and $\widetilde{Y}'$ since 
    \[
    w_2 (Y)+w_1(l|_Y)^2   = 0 \qquad \text{ and } \qquad  w_2 (Y')+ w_1(l|_{Y'})^2= 0.
    \]
    We claim that these real spin structures extend to $\widetilde{W}$. This follows from the condition 
    \[
    w_2(W) + w_1(l)^2 = 0.
    \]
    Thus, we obtain a $G$-equivariant real Bauer--Furuta invariant:
    \[
    BF^R_W \colon SWF_R(Y) \wedge (\widetilde{\mathbb{R}}^{\frac{- \sigma(\widetilde{W})}{8}})^+ \to SWF_R(Y') \wedge (\mathbb{R}^{b^+(W; l)})^+.
    \]
    Here, we assume that $\sigma(\widetilde{W}) = 0$ and 
    \[
    b^+(W; l) = b^+(\widetilde{W}) - b^+_\tau (\widetilde{W}) = 0 - 0 = 0.
    \]
    Thus, this gives a local map from $SWF_R(Y)$ to $SWF_R(Y')$. The opposite direction of a local map is obtained by considering the real Bauer--Furuta invariant for $-\widetilde{W}$. This completes the proof.
\end{proof}


\subsection{Results on zero-framed surgery}

In this section, we prove \cref{0surgerydetermine}, which follows from the following general theorem:

\begin{thm}
    For any knot $K$ in an oriented homology 3-sphere $Y$, we have 
    \[
    [SWF_R(K)]_{\text{loc}} = [SWF_R(Y_0(K))]_{\text{loc}}.
    \]
\end{thm}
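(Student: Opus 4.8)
The plan is to realize the desired local equivalence by the real Bauer--Furuta invariant of the double branched cover of the trace of the zero-framed surgery, treated as a cobordism map in the sense of \cref{realBF}. Let $X_0(K)$ be the trace of the zero-framed surgery on $K$, regarded as a cobordism from $Y$ to $Y_0(K)$, and let $S_K\colon (Y,K)\to (Y_0(K),\emptyset)$ be the core of the $2$-handle. Since $0$ is even, $X_0(K)\setminus S_K$ carries a unique surjection onto $\mathbb{Z}_2$ by \cref{homology}, so we may form the double branched cover $\widetilde X_0(K)$ of $X_0(K)$ along $S_K$. This is a $\mathbb{Z}_2$-equivariant cobordism
\[
\widetilde X_0(K)\colon \Sigma_2(Y,K)\longrightarrow \widetilde Y_0(K),
\]
where the branching along $K$ produces on the incoming end the double branched cover $\Sigma_2(Y,K)$ with its covering involution (whose real spin structure is the unique one of \cref{classification_real_spinc}), while on the outgoing end the cover is unbranched, producing the double cover $\widetilde Y_0(K)\to Y_0(K)$ associated with $\pi_1(Y_0(K))\twoheadrightarrow\mathbb{Z}_2$, which is exactly the cover underlying $SWF_R(Y_0(K))$, with its unique real spin structure from \cref{s1s2}. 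By \cref{uniquenessof0surgery} the cobordism $\widetilde X_0(K)$ itself carries a real spin structure, and by the uniqueness statements just cited it restricts on the two ends to the real spin structures used to define $SWF_R(K)=SWF_R(\Sigma_2(Y,K))$ and $SWF_R(Y_0(K))$; applying \cref{classification:real_spin}(i) to the orientation reversal $-\widetilde X_0(K)$ gives the same for the reversed cobordism.

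The homological bookkeeping is supplied by \cref{homology,lem:prop4surgery}. By \cref{lem:prop4surgery}, $\widetilde X_0(K)$ is spin with $b^+(\widetilde X_0(K))=0$ and $H^1(\widetilde X_0(K),\partial\widetilde X_0(K);\mathbb{Z}_2)=0$; moreover $\widetilde X_0(K)$ is the zero-framed surgery trace of a lift $\widetilde K\subset\Sigma_2(Y,K)$, so its intersection form is $(0)$, the image of $H^2(\widetilde X_0(K),\partial)\to H^2(\widetilde X_0(K))$ vanishes, and hence $\sigma(\widetilde X_0(K))=0$ and $b^+(\widetilde X_0(K);l)=b^+(\widetilde X_0(K))-b^+_\tau(\widetilde X_0(K))=0$ in the sense of \cref{realBF}. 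From \cref{homology}(i) one also has $b_1(\widetilde X_0(K))-b_1(X_0(K))=0-0=0$, so the standing hypotheses in the construction of the cobordism map are met. All of these quantities are unchanged by orientation reversal, so they hold for $-\widetilde X_0(K)$ as well.

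Now apply the real Bauer--Furuta construction of \cref{realBF} to the $\mathbb{Z}_2$-equivariant cobordism $\widetilde X_0(K)$. One end is a $\mathbb{Z}_2$-homology sphere rather than a homology $S^1\times S^2$, but, exactly as in the proof of the compactness lemma for homology $S^1\times S^2$ manifolds above, after passing to the $I$-invariant configuration space the non-compact part of the gauge group disappears, so the finite-dimensional approximation goes through and produces a $G$-equivariant morphism in $\mathfrak{C}_{G}$. Because $\sigma(\widetilde X_0(K))=0$ and $b^+(\widetilde X_0(K);l)=0$, no residual suspensions survive and we obtain a local map
\[
BF^R_{\widetilde X_0(K)}\colon SWF_R(K)\longrightarrow SWF_R(Y_0(K)),
\]
the vanishing of $b^+$ and of $H^1(\widetilde X_0(K),\partial;\mathbb{Z}_2)$ ensuring that it induces a $G$-homotopy equivalence on $H$-fixed point sets. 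The same construction applied to $-\widetilde X_0(K)\colon \widetilde Y_0(K)\to\Sigma_2(Y,K)$ yields a local map $SWF_R(Y_0(K))\to SWF_R(K)$. Two local maps in opposite directions exhibit a local equivalence, so $[SWF_R(K)]_{\mathrm{loc}}=[SWF_R(Y_0(K))]_{\mathrm{loc}}$.

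The step requiring genuine care, rather than just bookkeeping, is the last one: making sense of the real Bauer--Furuta invariant of $\widetilde X_0(K)$ and verifying it is a local map when the two ends live in different homological regimes (a rational homology sphere on one side, a rational homology $S^1\times S^2$ on the other). Concretely, one must rerun the compactness and finite-dimensional approximation arguments in the proof of \cref{realBF} for this mixed cobordism and match the correction terms $n(\cdot,\cdot,g)$ and the index formulas for $m_0-m_1$, $n_0-n_1$ appearing there, so that the hypotheses $\sigma(\widetilde X_0(K))=0$ and $b^+(\widetilde X_0(K);l)=0$ indeed leave no extra suspension factors and yield an honest morphism in $\mathfrak{C}_{G}$ inducing an equivalence on $H$-fixed points.
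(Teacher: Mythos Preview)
Your proof is correct and follows essentially the same approach as the paper: both use the double branched cover $\widetilde{X}_0(K)$ of the zero-framed surgery trace along the handle core as a real spin cobordism from $\Sigma_2(Y,K)$ to $\widetilde{Y_0(K)}$, verify $\sigma=0$ and the relevant $b^+$-type vanishing, and then run the real Bauer--Furuta construction in both directions to obtain the two local maps. You are in fact more explicit than the paper about the one genuinely delicate point---that the two ends of this cobordism lie in different homological regimes (a $\mathbb{Z}_2$-homology sphere versus a homology $S^1\times S^2$), so \cref{realBF} does not literally apply and the construction must be rerun; the paper only says ``using an argument similar to that in \cref{realBF}.''
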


\begin{proof}
From \cref{uniquenessof0surgery}, we see that there exists a $\mathbb{Z}_2$-equivariant cobordism
\[
\widetilde{X}_0(K) \colon \Sigma_2(Y, K) \to \widetilde{S^3_0}(K),
\]
where $\widetilde{S^3_0}(K)$ denotes the unique non-trivial double cover of $S^3_0(K)$, obtained as the double branched cover along the 2-handle core of the trace of the zero-framed surgery on $K$. Again by \cref{uniquenessof0surgery}, there exists a unique real spin structure on this cobordism. One can readily verify that
\begin{align*}
\sigma(\widetilde{X}_0(K)) &= 0, \\
b_2(\widetilde{X}_0(K)) - b_2^\tau(\widetilde{X}_0(K)) &= 0.
\end{align*}

Using an argument similar to that in \cref{realBF}, we obtain a real cobordism map
\[
BF^R_{\widetilde{X}_0(K)} \colon SWF_R(K) \to SWF_R(S^3_0(K)),
\]
which is a local map. The reverse direction is given by considering the cobordism $-\widetilde{X}_0(K)$. This completes the proof.
\end{proof}

As a corollary, we immediately obtain \cref{0surgerydetermine}, whose statement we recall:

\begin{cor}
    For any knot $K$ in $S^3$, 
    the local equivalence class $[SWF_R(K) ]_{\text{loc}}$ only depends on the orientation preserving diffeomorphism type of $S^3_0(K)$. In particular, the concordance invariants 
    $$
    \delta_R(K), \quad \overline{\delta}_R(K), \quad \underline{\delta}_R(K), \quad \text{and} \quad \kappa_R(K)
    $$
    depend only on the orientation preserving diffeomorphism type of $S^3_0(K)$. \qed
\end{cor}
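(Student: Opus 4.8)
The plan is to read the corollary off from the theorem just proved. Applying that theorem with $Y = S^3$ gives $[SWF_R(K)]_{\text{loc}} = [SWF_R(S^3_0(K))]_{\text{loc}}$ for every knot $K \subset S^3$, so it suffices to argue that the right-hand side is an invariant of the orientation-preserving diffeomorphism type of the $3$-manifold $S^3_0(K)$. Concretely, given knots $K, K'$ together with an orientation-preserving diffeomorphism $\varphi \colon S^3_0(K) \to S^3_0(K')$, I would produce a $G$-equivariant stable homotopy equivalence $SWF_R(S^3_0(K)) \simeq SWF_R(S^3_0(K'))$ in $\mathfrak{C}_{G}$; equality of local equivalence classes, and hence of $[SWF_R(K)]_{\text{loc}}$ and $[SWF_R(K')]_{\text{loc}}$, follows a fortiori.

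To produce that equivalence I would check that every piece of data entering the construction of $SWF_R$ for a homology $S^1 \times S^2$ is natural under $\varphi$. The map $\varphi$ lifts (uniquely up to the deck transformation) to the canonical non-trivial double covers; by \cref{s1s2} each double cover carries a real spin structure that is unique up to isomorphism and sign, and by the remark following \cref{classification:real_spin} the two sign choices induce the same real Seiberg--Witten theory, so $\varphi$ carries one real spin structure to the other after an admissible sign change. Pulling back a $\tau$-invariant metric along $\varphi$, and using that the resulting class in $\mathfrak{C}_{G}$ is independent of the auxiliary metric and of the finite-dimensional cutoffs $\lambda, \mu$ (the metric-independence statement established just above), $\varphi$ then intertwines the Chern--Simons--Dirac functionals, their finite-dimensional approximations, and the associated $G$-equivariant Conley indices; this yields the desired $G$-equivariant stable homotopy equivalence and proves the first assertion of the corollary. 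Note that the orientation-preserving hypothesis is genuinely used here, since the construction depends on the orientation.

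For the concordance invariants, I would invoke that all four are invariants of the local equivalence class: $\delta_R$, $\overline{\delta}_R$, and $\underline{\delta}_R$ are by construction functions on $\mathcal{LE}$, and $\kappa_R$ is likewise a local equivalence invariant, since it is obtained by applying the doubling functor $\mathfrak{C}_{G} \to \mathfrak{D}_{G}$ and extracting $G$-equivariant $K$-theoretic data, and a pair of local maps in opposite directions induces the module maps that constrain it from both sides, as in \cite{KMT:2021}. Combined with the first assertion this gives that $\delta_R(K)$, $\overline{\delta}_R(K)$, $\underline{\delta}_R(K)$, and $\kappa_R(K)$ depend only on the orientation-preserving diffeomorphism type of $S^3_0(K)$.

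I do not expect a genuine obstacle, as the substantive content is already contained in the theorem; the corollary is essentially the statement that $SWF_R$ of a homology $S^1 \times S^2$ is a diffeomorphism invariant of the underlying oriented $3$-manifold. The one step requiring care is the naturality bookkeeping in the second paragraph, i.e.\ verifying that $\varphi$ transports each ingredient (invariant metric, real spin structure up to sign, eigenspace approximations, Conley index) compatibly so that one obtains an honest equivalence in $\mathfrak{C}_{G}$ rather than merely a local equivalence, together with citing the precise statement from \cite{KMT:2021} that $\kappa_R$ descends to $\mathcal{LE}$.
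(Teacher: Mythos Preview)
Your proposal is correct and follows the same approach as the paper, which treats the corollary as immediate (the statement carries a \qed\ with no further argument) from the preceding theorem $[SWF_R(K)]_{\text{loc}} = [SWF_R(S^3_0(K))]_{\text{loc}}$. You simply spell out in more detail the implicit step that $SWF_R$ of a homology $S^1\times S^2$ is an orientation-preserving diffeomorphism invariant, which the paper takes for granted.
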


\noindent Note that, in fact, the invariants $\delta_R(K)$, $\overline{\delta}_R(K)$, $\underline{\delta}_R(K)$, and $\kappa_R(K)$ depend only on the real homology cobordism class of $S^3_0(K)$ by \cref{R-cob}.

\section{Excision theorem in real Seiberg--Witten theory}

In this section, we prove the satellite formula \cref{thm:mainhomotopytype}. To achieve this, we first establish an excision theorem in real Seiberg--Witten theory.

\subsection{Statement of the excision theorem and satellite formula}
Fix the convention of orientation so that the volume form is written as $dt \wedge d_Y$ on $[0,1]\times Y$.

\begin{thm}[Real Excision Theorem]\label{exthm}
    Let $Y$ and $Y'$ be $\mathbb{Z}_2$-homology 3-spheres with odd involutions. Suppose there exist $\mathbb{Z}_2$-equivariant decompositions:  
    \[
    Y = Y_1 \cup_{T^2} Y_2  \qquad\text{ and }\qquad Y' = Y'_1 \cup_{T^2} Y'_2,
    \]
    where the actions on $T^2$ are identified via $(z,w) \mapsto (-z, w)$. Define  
    \[
    Z = Y_1 \cup_{T^2} Y_2' \qquad \text{ and } \qquad Z' = Y_1' \cup_{T^2} Y_2.
    \]
    We suppose the restricted natural real spin structures on $T^2$ from $Y$, $Y'$, $Z$, and $Z'$ are the same up to sign.  
    Then, we have  
    \[
    SWF_R(Y) \wedge SWF_R(Y') \simeq SWF_R(Z) \wedge SWF_R(Z')
    \]
    as $G$-equivariant stable homotopy types with respect to natural real spin structures on $Z$ and $Z'$.
\end{thm}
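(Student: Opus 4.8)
The plan is to realize the asserted equivalence as the real Bauer--Furuta invariant of an \emph{excision cobordism} in the sense of Kronheimer--Mrowka \cite{kronheimer2010knots}, and to prove that it is an equivalence by exhibiting an inverse, namely the real Bauer--Furuta invariant of the reverse excision cobordism, via a composition (gluing) formula for real Bauer--Furuta invariants.

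\textbf{Step 1: the excision cobordism and its real spin structure.} Following \cite{kronheimer2010knots}, one builds a smooth cobordism $W$ from $Y \sqcup Y'$ to $Z \sqcup Z'$ by gluing the product cobordisms $[0,1]\times Y_1$, $[0,1]\times Y_1'$, $[0,1]\times Y_2$, $[0,1]\times Y_2'$ to a model piece of the form $T^2 \times \Sigma$, where $\Sigma$ is a fixed planar-type surface chosen so that the gluing pattern implements the reshuffling $Y_1\cup_{T^2}Y_2,\ Y_1'\cup_{T^2}Y_2' \ \rightsquigarrow\ Y_1\cup_{T^2}Y_2',\ Y_1'\cup_{T^2}Y_2$. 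We equip $W$ with an involution that is $\mathrm{id}\times\tau_i$ on each product piece and $\big((z,w)\mapsto(-z,w)\big)\times\mathrm{id}_\Sigma$ on $T^2\times\Sigma$; these agree on the overlaps precisely because the involutions on the gluing tori are all identified with $(z,w)\mapsto(-z,w)$ by hypothesis. Since the restricted real spin structures on $T^2$ coming from $Y$, $Y'$, $Z$, $Z'$ all agree up to sign, and the real spin structure on $T^2\times\Sigma$ pulled back from $T^2$ extends over $\Sigma$, the real spin structures glue to a real spin structure on $W$ (equivalently, on the associated double cover $\widetilde W$) restricting to the chosen ones on the four ends. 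This is exactly where the hypothesis on $T^2$ enters.

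\textbf{Step 2: homological conditions and the cobordism map.} A Mayer--Vietoris computation using the product structure of the constituent pieces shows that the reduced signature $\sigma(W)$ and the twisted positive part $b^+(W;l)$ of \cref{realBF} both vanish: the product cobordisms contribute nothing, and the $T^2\times\Sigma$ piece contributes no positive-definite part relative to the local system $l$. Hence, applying \cref{realBF} (in the $\mathbb{Z}_2$-homology-sphere case, and using the multiplicativity $SWF_R(Y\sqcup Y')\simeq SWF_R(Y)\wedge SWF_R(Y')$) we obtain a $G$-equivariant map
\[
BF^R_W \colon SWF_R(Y)\wedge SWF_R(Y') \longrightarrow SWF_R(Z)\wedge SWF_R(Z')
\]
with no spurious sphere factors. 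The same construction applied to the reverse excision cobordism $\overline W \colon Z\sqcup Z' \to Y\sqcup Y'$ (the excision cobordism for the decompositions of $Z$ and $Z'$) yields $BF^R_{\overline W}$ in the opposite direction.

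\textbf{Step 3: inverse via composition, and the main obstacle.} As in \cite{kronheimer2010knots}, the composite $\overline W \circ W$ is diffeomorphic, compatibly with the real spin structures, to the product cobordism $[0,1]\times (Y\sqcup Y')$ — the cut-and-reglue along $T^2$ followed by its reverse is undone by an explicit isotopy of the gluing tori — and symmetrically $W\circ \overline W$ is the product cobordism on $Z\sqcup Z'$. Invoking the composition formula $BF^R_{W_2\circ W_1}\simeq BF^R_{W_2}\circ BF^R_{W_1}$ for real Bauer--Furuta invariants, together with the normalization $BF^R_{[0,1]\times Y}\simeq \mathrm{id}$, we conclude $BF^R_{\overline W}\circ BF^R_W\simeq \mathrm{id}$ and $BF^R_W\circ BF^R_{\overline W}\simeq \mathrm{id}$, so $BF^R_W$ is a $G$-equivariant stable homotopy equivalence, which is the assertion. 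The main obstacle is the analytic input behind this last step: proving the composition formula for real Bauer--Furuta invariants in the present $\mathbb{Z}_4$-equivariant, local-coefficient setting requires a neck-stretching and gluing analysis of the real Seiberg--Witten equations along the stretched region, whose cross-section is $T^2\times S^1$. This $3$-manifold carries reducible solutions, and the compactness needed for finite-dimensional approximation is recovered only after passing to the $I$-invariant configuration space before forming the equivariant Conley index — the free involution on $T^2$ being precisely what makes the relevant gauge-theoretic non-compactness disappear after this reduction, in analogy with Kronheimer--Mrowka's use of the flat moduli space on $T^3$ in the instanton setting. A secondary point is checking that the model piece $T^2\times\Sigma$ with its free $T^2$-involution contributes no reducible obstruction, which again follows once one works $I$-equivariantly.
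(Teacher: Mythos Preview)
Your overall architecture is right, but Step~3 contains a genuine error. The composite $\overline W \circ W$ is \emph{not} diffeomorphic to the product cobordism $[0,1]\times(Y\sqcup Y')$; this is not what happens in \cite{kronheimer2010knots} either. When you stack the excision cobordism on its reverse, the central $T^2\times\Sigma$ pieces combine to $T^2\times\Sigma'$ where $\Sigma'$ has nontrivial topology (picture two saddles glued along half their boundary), and no isotopy of gluing tori undoes this. What is true, and what the paper does, is that the composite becomes the product \emph{after a fiberwise surgery}: one removes a piece $D^1\times S^1\times T^2$ and glues in $S^0\times D^2\times T^2$, and the result is then diffeomorphic (real-spin compatibly) to $Z\times I\sqcup Z'\times I$.

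The missing step, then, is to show that this surgery leaves the real Bauer--Furuta invariant unchanged. This requires three ingredients: (i) the computation $SWF_R(T^3)\simeq S^0$ for the involution $(x,y,z)\mapsto(x,y,-z)$ with its real spin structure, so that gluing along $T^3$ introduces no new factor; (ii) the verification, via flat or positive-scalar-curvature metrics, that both $BF^R_{D^1\times S^1\times T^2}$ and $BF^R_{S^0\times D^2\times T^2}$ are $\pm\id\colon S^0\to S^0$; and (iii) a gluing formula along $T^3$, so that one can write $BF^R_{\overline W\circ W}$ as a composite involving the first surgery piece, replace it by the second, and recognize the answer as $BF^R$ of the product. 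Your intuition that $T^3$ plays a role is correct, but it enters here---as the boundary of the surgery pieces---rather than as a neck cross-section in the composition formula for $W$ and $\overline W$ themselves, which are glued along the $\mathbb{Z}_2$-homology spheres $Z\sqcup Z'$.
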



As a corollary, we obtain \cref{thm:mainhomotopytype}. We recall the statement:

\begin{thm}\label{ex}
    Let $K$ be a knot in $S^3$. If $P$ is a pattern with an odd winding number, then the real Floer homotopy types
    \[
    SWF_R(P(K)) \qquad \text{and} \qquad SWF_R(K) \wedge SWF_R(P(U))
    \]
    are $G$-equivariantly stably homotopy equivalent. 
\end{thm}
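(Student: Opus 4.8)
The plan is to deduce \cref{ex} from the Real Excision Theorem \cref{exthm} by exhibiting the satellite decomposition of double branched covers as an instance of excision. First I would fix the topology. Writing the companion torus of $P(K)$ as $T=\partial\nu(K)$, one has $S^3\setminus\nu(P(K))=E_K\cup_T(V\setminus\nu(P))$, where $E_K=S^3\setminus\nu(K)$ is the exterior of $K$, $V=S^1\times D^2$ is the pattern solid torus, and the gluing identifies the meridian $\mu_V$ of $V$ with the meridian $m_K$ of $K$ and the longitude $\lambda_V$ with the $0$-framed longitude. Since $P$ has odd winding number $w$, the mod-$2$ linking number with $P(K)$ sends $m_K$ to $w\equiv 1$, while on $V\setminus\nu(P)$ it sends the pattern meridian to $1$ and the core to $0$ (here the $0$-framing is used). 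Hence the double branched cover restricts over $E_K$ to the \emph{connected} double cover $\widetilde{E_K}$ and over $V$ to the double branched cover $\Sigma_2(V,P)$, glued along the connected double cover $\widetilde T$ of $T$, on which the deck involution is free with torus quotient, hence of the form $(z,w)\mapsto(-z,w)$ in suitable coordinates, as \cref{exthm} requires. Running the same discussion with the unknot in place of $K$ gives $\Sigma_2(S^3,P(U))=\widetilde{E_U}\cup_{\widetilde T}\Sigma_2(V,P)$, with exactly the same piece $\Sigma_2(V,P)$; and since $\nu(K)$ and $\nu(U)$ are the same parametrized solid torus, the branched double covers $\widetilde{\nu(K)}$ and $\widetilde{\nu(U)}$ coincide, so $\Sigma_2(S^3,K)=\widetilde{E_K}\cup_{\widetilde T}\widetilde{\nu(K)}$ and $\Sigma_2(S^3,U)=\widetilde{E_U}\cup_{\widetilde T}\widetilde{\nu(U)}\cong S^3$.

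Next I would apply \cref{exthm} with the assignment
\[
Y_1=\widetilde{E_K},\qquad Y_2=\Sigma_2(V,P),\qquad Y_1'=\widetilde{E_U},\qquad Y_2'=\widetilde{\nu(K)},
\]
so that $Y:=Y_1\cup_{\widetilde T}Y_2=\Sigma_2(S^3,P(K))$ and $Y':=Y_1'\cup_{\widetilde T}Y_2'=\Sigma_2(S^3,U)\cong S^3$, while the recombinations are $Z:=Y_1\cup_{\widetilde T}Y_2'=\Sigma_2(S^3,K)$ and $Z':=Y_1'\cup_{\widetilde T}Y_2=\Sigma_2(S^3,P(U))$. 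Each of $Y,Y',Z,Z'$ is the double branched cover of a knot in $S^3$, hence a $\mathbb{Z}_2$-homology $3$-sphere with an odd involution carrying its natural real spin structure by \cref{classification_real_spinc}. Granting the real spin compatibility on $\widetilde T$ (see below), \cref{exthm} gives $SWF_R(Y)\wedge SWF_R(Y')\simeq SWF_R(Z)\wedge SWF_R(Z')$, that is,
\[
SWF_R(P(K))\wedge SWF_R(S^3)\;\simeq\;SWF_R(K)\wedge SWF_R(P(U)).
\]
Since $\Sigma_2(S^3,U)=S^3$ has $SWF_R(S^3)\simeq S^0$, cancelling the unit yields the claimed $G$-equivariant stable homotopy equivalence $SWF_R(P(K))\simeq SWF_R(K)\wedge SWF_R(P(U))$, which is \cref{ex}.

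The step I expect to be the main obstacle — the excision equivalence itself being granted — is the verification of the last hypothesis of \cref{exthm}: that the natural real spin structures on the separating torus $\widetilde T$ induced from $Y$, $Y'$, $Z$, and $Z'$ coincide up to sign. My plan here is to exploit the collar pieces common to several of the four manifolds: $Y$ and $Z$ both contain $\widetilde{E_K}$, $Y'$ and $Z'$ both contain $\widetilde{E_U}$, and $Y$ and $Z'$ both contain $\Sigma_2(V,P)$, each as a neighborhood of $\widetilde T$. For $\widetilde{E_K}$ and $\widetilde{E_U}$ one checks that the real spin structure is unique up to isomorphism and sign: $H^1(\,\cdot\,;\mathbb{Z}_2)=\mathbb{Z}_2$ is generated by the class of the connected double cover, which pulls back to zero, so no line-bundle twist changes the isomorphism class and the classification of \cref{classification:real_spiny} collapses; one argues the analogous uniqueness for $\Sigma_2(V,P)$ after computing $H_1(\Sigma_2(V,P);\mathbb{Z}_2)=\mathbb{Z}_2$ by Mayer--Vietoris from $\Sigma_2(S^3,P(U))=\widetilde{E_U}\cup_{\widetilde T}\Sigma_2(V,P)$ and checking that the nontrivial line bundle admits no order-two equivariant lift of $\tau$. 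Since the natural real spin structure on each of $Y,Y',Z,Z'$ is itself unique up to sign by \cref{classification_real_spinc}, restricting through these shared pieces forces all four restrictions to $\widetilde T$ to agree up to sign. A last, routine point is to fix orientations and framings so that the recombined gluings reproduce $\Sigma_2(S^3,K)$ and $\Sigma_2(S^3,P(U))$ with their standard orientations; this is automatic once the boundary parametrizations of the four pieces — each an abstract solid torus or branched cover with its standard meridian--longitude framing — are chosen once and for all.
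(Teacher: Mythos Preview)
Your proposal is correct and follows the same approach as the paper: both apply \cref{exthm} to the identical decomposition of $\Sigma_2(S^3,P(K))$ and $\Sigma_2(S^3,U)$ along the lifted companion torus, obtaining $\Sigma_2(S^3,K)$ and $\Sigma_2(S^3,P(U))$ as the recombinations, and then cancel $SWF_R(U)\simeq S^0$. The only difference is that you verify the real spin compatibility on $\widetilde T$ case by case via uniqueness on the shared pieces $\widetilde{E_K}$, $\widetilde{E_U}$, $\Sigma_2(V,P)$, whereas the paper dispatches this hypothesis generically inside the proof of \cref{exthm} itself---showing that the real spin structure induced on an equivariantly embedded $T^2$ in any $\mathbb{Z}_2$-homology sphere with odd involution is automatically unique---so that no further verification is needed in the proof of \cref{ex}.
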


\begin{proof}
    Let $Y$ and $Y'$ be the double branched covers of $P(K)$ and the unknot $U$, respectively. Then, by the assumption that $P$ has an odd winding number, we have the following $\mathbb{Z}_2$-equivariant decompositions:
    \begin{align*}
        Y &= Y_1 \cup_{T^2} Y_2 = \left(\widetilde{S^3 \setminus \nu(K)}\right) \cup_{T^2} \Sigma_2(S^1 \times D^2, P), \\ 
        Y' &= Y'_1 \cup_{T^2} Y'_2 = \left(\widetilde{S^3 \setminus \nu(U)}\right) \cup_{T^2} \Sigma_2(S^1 \times D^2, S^1 \times \{0\}),
    \end{align*} 
    where $\widetilde{S^3 \setminus \nu(K)}$ and $\widetilde{S^3 \setminus \nu(U)}$ are the 2-fold covers of the complements of open tubular neighborhoods of $K$ and $U$, respectively, and $\Sigma_2(S^1 \times D^2, P)$ and $\Sigma_2(S^1 \times D^2, S^1 \times \{0\})$ are the double branched covers of $S^1 \times D^2$ branched along $P$ and the trivial knot $S^1 \times \{0\}$, respectively. Note that the actions on $T^2$ are given by $(z, w) \mapsto (-z, w)$.

    Now, we apply \Cref{exthm}. Observe that we have 
    \[
    Z = Y_1 \cup_{T^2} Y_2' \qquad \text{ and } \qquad Z' = Y_1' \cup_{T^2} Y_2,
    \]
    so that $Z$ is the double branched cover of $K$ and $Z'$ is the double branched cover of $P(U)$. The conclusion then follows.
\end{proof}


We also prove \cref{degree} as a corollary. The statement is as follows:

\begin{cor}\label{degeqcor}
    For any knot $K$ in $S^3$ and any pattern $P$ with an odd winding number, we have  
    \[
    |\deg (K)|\cdot |\deg P(U)| = |\deg (P(K))| .
    \]
\end{cor}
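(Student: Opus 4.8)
The plan is to deduce this entirely from the homotopy equivalence of \cref{ex} together with the functoriality of the degree invariant under smash products. Recall from \cite[Definition 4.28]{Mi23} (see also \cite{KPT:2024}) that $|\deg(K)|$ is extracted from $SWF_R(K)$ by taking a signed count of $\{\pm1\}$-framed real Seiberg--Witten solutions on $\Sigma_2(K)$; at the level of the real Floer homotopy type this is an invariant of the $G$-equivariant stable homotopy class of $SWF_R(K)$, given by the mapping degree of a canonical $G$-map between one-point compactifications of representation spheres (roughly, the component of the real Bauer--Furuta-type class in the appropriate degree on the $H$-fixed points), taken in absolute value. The first step is therefore to recall this description and record the key formal property: for any two knots $K_1, K_2$, a $G$-equivariant stable homotopy equivalence $SWF_R(K_1\# K_2) \simeq SWF_R(K_1)\wedge SWF_R(K_2)$ induces the multiplicativity $|\deg(K_1\# K_2)| = |\deg(K_1)|\cdot|\deg(K_2)|$, because the relevant mapping degree is multiplicative under smash product of representation spheres and invariant under equivariant stable homotopy equivalence.

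The second step is to promote this to arbitrary pairs of real spin $\mathbb{Z}_2$-homology spheres (or rather to the inputs arising in \cref{ex}): the quantity $|\deg|$ is defined for the real Floer homotopy type of any $\mathbb{Z}_2$-homology $3$-sphere with an odd involution and unique real spin structure, and it is multiplicative under $\wedge$ in the sense that if $\mathcal{X} \simeq \mathcal{Y}\wedge\mathcal{Z}$ as $G$-equivariant stable homotopy types in $\mathfrak{C}_G$, then $|\deg(\mathcal{X})| = |\deg(\mathcal{Y})|\cdot|\deg(\mathcal{Z})|$. This is again purely formal from the definition as a mapping degree.

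The third step is to simply apply \cref{ex}: it gives a $G$-equivariant stable homotopy equivalence
\[
SWF_R(P(K)) \simeq SWF_R(K)\wedge SWF_R(P(U)).
\]
Combining this with the multiplicativity of $|\deg|$ from the previous steps yields
\[
|\deg(P(K))| = |\deg(K)|\cdot|\deg(P(U))|,
\]
as claimed.

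The main obstacle is step one: one must verify that Miyazawa's degree invariant, as originally defined through a count of framed solutions, genuinely factors through the $G$-equivariant stable homotopy type $SWF_R(K) \in \mathfrak{C}_G$ as a mapping degree, and that this mapping-degree description is multiplicative under smash product. The multiplicativity itself is elementary once the right formulation is in place, but pinning down that the framed-solution count agrees with the homotopy-theoretic degree — i.e.\ that the real Bauer--Furuta invariant of the appropriate cobordism (or the finite-dimensional approximation map) computes $|\deg|$ — requires carefully matching conventions with \cite{Mi23, KPT:2024}. If that identification is already in the literature, this step is a citation; otherwise it needs a short lemma, which I would prove by observing that both quantities are computed from the same finite-dimensional approximation map $(\widetilde{\mathbb{R}}^a\oplus\mathbb{C}_+^b)^+ \to (\widetilde{\mathbb{R}}^{a}\oplus\mathbb{C}_+^{b})^+$ on $H$-fixed points, whose degree on the fixed-point sphere counts precisely the $\{\pm1\}$-framed reducible-free real solutions with sign.
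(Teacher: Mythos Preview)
Your overall strategy --- invoke \cref{ex} and then use multiplicativity of $|\deg|$ under smash product --- matches the paper's. The difference is in how you establish multiplicativity. You try to realize $|\deg(K)|$ as the mapping degree of a canonical $G$-map between representation spheres, and you correctly flag this identification as the main obstacle; indeed $SWF_R(K)$ is not in general a sphere, so there is no obvious single map whose degree computes the invariant, and your description on the $H$-fixed points is somewhat vague. The paper bypasses this entirely: it cites \cite[Proposition~5.2]{KPT:2024}, which gives
\[
|\deg(K)| = \bigl|\chi\bigl(\widetilde{H}^*(SWF_R(K))\bigr)\bigr|,
\]
and then multiplicativity is immediate from K\"unneth for the smash product together with multiplicativity of Euler characteristics under tensor product. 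This is both shorter and already in the literature, so your ``if that identification is already in the literature, this step is a citation'' is exactly right --- just with Euler characteristic in place of mapping degree.
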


\begin{proof}
    From \cref{ex}, we have 
    \[
    \widetilde{H}^*(SWF_R(P(K))) \cong \widetilde{H}^*(SWF_R(K)) \otimes \widetilde{H}^*(SWF_R(P(U))).
    \]
    Since it was shown in \cite[Proposition 5.2]{KPT:2024} that 
    \[
    |\deg (K)| = |\chi (\widetilde{H}^*(SWF_R(K)))|,
    \]
    the desired result follows.
\end{proof}

\subsection{Proof of the excision theorem}

\begin{figure}[htb]
    \centering \includegraphics[width=0.23\linewidth]{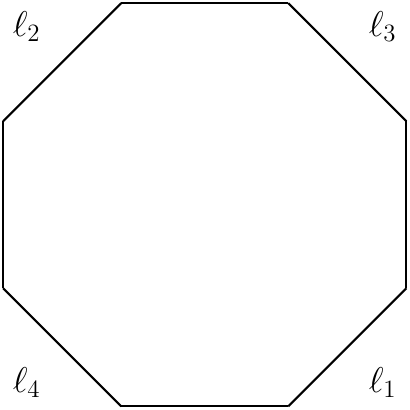}
    \caption{The manifold $U$ with eight codimension-one faces.}
    \label{fig:excision1}
\end{figure}
We use the same notation as in \Cref{exthm} and define a $\mathbb{Z}_2$-equivariant cobordism  
\[
W \colon Y \sqcup Y' \to Z \sqcup Z'
\]
as follows.  

First, we consider a 2-manifold $U$ with eight corners, which is diffeomorphic to $[0,1]^2$ and illustrated in \cref{fig:excision1}.
The manifold $U$ has eight codimension-one faces, four of which are labeled
$\ell_1, \ell_2, \ell_3$, and $\ell_4$. Next, we define the space  
\[
X^0 := \left( U \times T^2 \right) \cup_{\ell_1} 
\left( Y_1 \times [0,1] \right)
\cup_{\ell_2}
\left( Y_1' \times [0,1] \right)
\cup_{\ell_3}
\left( -Y_2 \times [0,1] \right)
\cup_{\ell_4}
\left( -Y_2' \times [0,1] \right),
\]
as described in \cref{fig:excision2}, where $T^2 = S^1 \times S^1$.   Then, we may regard $X^0$ as a cobordism from $Y \sqcup Y'$ to $Z \sqcup Z'$.  
We define an involution on $X^0$ by acting on the $T^2$-component via $(z,w) \mapsto (-z , w)$ and extending it naturally to each $Y_i$ and $Y_i'$.  
Let $\tau$ denote this $\mathbb{Z}_2$-action.

\begin{figure}[htb]
    \centering \includegraphics[width=0.35\linewidth]{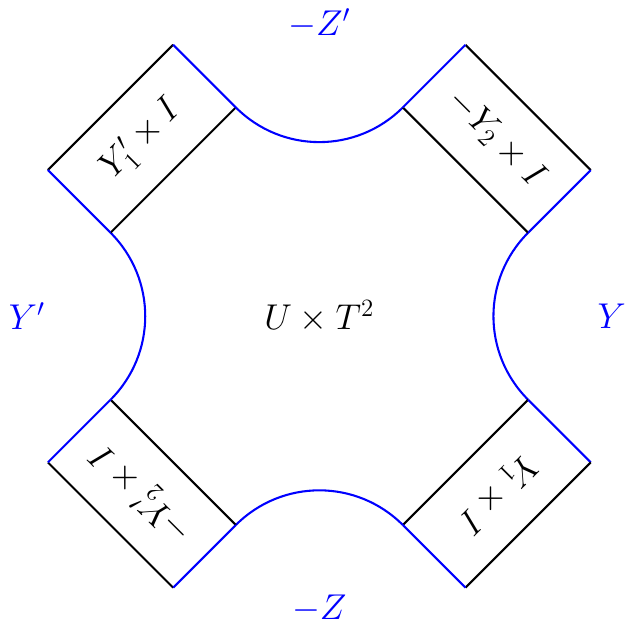}
    \caption{The manifold obtained by gluing $U \times T^2$ with $Y_1 \times [0,1]$, $Y_1' \times [0,1]$, $-Y_2 \times [0,1]$, and $-Y_2'\times [0,1]$.}
    \label{fig:excision2}
\end{figure}

\begin{lem}
    There is a unique real spin structure on $X^0$  up to isomorphism and sign. Furthermore, we have
    \[
    b_2 (X^0) - b_2 (X^0/ \tau) = 0, \qquad 
    b_1 (X^0) - b_1 (X^0/ \tau) = 0, \qquad 
    \text{ and } \qquad \sigma (X^0) = 0.
    \]
\end{lem}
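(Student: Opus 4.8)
The plan is to build the required real spin structure on $X^0$ by gluing together natural real spin structures on the five pieces $U\times T^2$, $Y_1\times[0,1]$, $Y_1'\times[0,1]$, $Y_2\times[0,1]$, $Y_2'\times[0,1]$, and then to read off the three numerical identities from one Mayer--Vietoris computation. On $Y_1\times[0,1]$ and $Y_2\times[0,1]$ one takes the real spin structure pulled back (as in \cref{lem:3manifoldtimesI}) from the restriction to $Y_1$, resp.\ $Y_2$, of the natural real spin structure on $Y=Y_1\cup_{T^2}Y_2$, and similarly for $Y_1'\times[0,1]$ and $Y_2'\times[0,1]$ using $Y'$; here $Y$ and $Y'$ carry real spin structures, unique up to sign, by \cref{classification:real_spiny}. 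Since the Mayer--Vietoris sequence of $Y=Y_1\cup_{T^2}Y_2$ with $H^1(Y;\mathbb{Z}_2)=0$ shows that $H^1(Y_i;\mathbb{Z}_2)$ injects into $H^1(T^2;\mathbb{Z}_2)$, a real spin structure on $Y_i$ is determined by its restriction to $T^2=\partial Y_i$; hence the hypothesis of \cref{exthm} that the real spin structures induced on $T^2$ by $Y$, $Y'$, $Z$, $Z'$ all agree up to sign guarantees that these four structures also fit together correctly with the ones on $Z=Y_1\cup_{T^2}Y_2'$ and $Z'=Y_1'\cup_{T^2}Y_2$ along the other ends. On the central piece the involution is free, the quotient is again diffeomorphic to $U\times T^2$, which is parallelizable hence spin, and the associated $\mathbb{Z}_2$-line bundle $l$ is pulled back from the quotient circle $S^1=S^1/(z\sim -z)$, so $w_1(l)^2$ comes from $H^2(S^1;\mathbb{Z}_2)=0$ and $w_2+w_1(l)^2=0$; thus \cref{classification:real_spin} yields a real spin structure on $U\times T^2$, and since $U$ is contractible it may be chosen so as to restrict on each face $\ell_i\times T^2$ to the prescribed one. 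Gluing the five structures, after consistently fixing signs, produces a real spin structure on $X^0$.

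For uniqueness up to isomorphism and sign, note that by \cref{classification:real_spin} the real spin structures on $X^0$ are classified by real line bundles admitting an order-two lift of $\tau$ together with a choice of sign, so it suffices to prove $H^1(X^0;\mathbb{Z}_2)=0$. With the decomposition $X^0=(U\times T^2)\cup\bigsqcup_i(Y_i\times[0,1])$ glued along four copies of $T^2\times[0,1]\simeq T^2$, Mayer--Vietoris identifies $H^1(X^0;\mathbb{Z}_2)$ with the space of tuples in $\bigoplus_i H^1(Y_i;\mathbb{Z}_2)$ whose four restrictions into $H^1(T^2;\mathbb{Z}_2)$ all agree; the injectivity of $H^1(Y_1;\mathbb{Z}_2)\oplus H^1(Y_2;\mathbb{Z}_2)\to H^1(T^2;\mathbb{Z}_2)$ (from $H^1(Y;\mathbb{Z}_2)=0$) together with that of $H^1(Y_1';\mathbb{Z}_2)\oplus H^1(Y_2';\mathbb{Z}_2)\to H^1(T^2;\mathbb{Z}_2)$ (from $H^1(Y';\mathbb{Z}_2)=0$) then kills every such tuple, so $H^1(X^0;\mathbb{Z}_2)=0$. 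The identical argument over $\mathbb{Q}$ gives $H^1(X^0;\mathbb{Q})=0$, whence $b_1(X^0)=b_1(X^0/\tau)=0$. For $b_2$, the transfer isomorphism gives $b_2(X^0/\tau)=\dim H^2(X^0;\mathbb{Q})^{\tau^*}$, so it is enough that $\tau^*$ act trivially on $H^2(X^0;\mathbb{Q})$; in the Mayer--Vietoris description $H^2(X^0;\mathbb{Q})$ is a quotient of $\bigoplus_i H^1(T^2;\mathbb{Q})$ (using that $H^2(Y_i;\mathbb{Q})=0$, which follows from the half-lives-half-dies property of the torus boundary), and on each gluing torus $\tau$ is the deck transformation $z\mapsto -z$ of $S^1\to S^1$, which is isotopic to the identity and hence acts trivially on $H^1(T^2;\mathbb{Q})$; therefore $\tau^*=\operatorname{id}$ on $H^2(X^0;\mathbb{Q})$ and $b_2(X^0)=b_2(X^0/\tau)$. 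Finally $\sigma(X^0)=0$ by Novikov additivity over the decomposition: the products $Y_i\times[0,1]$ have vanishing signature, $\sigma(U\times T^2)=\sigma(D^2\times T^2)=0$ since the generator $[\mathrm{pt}\times T^2]$ of its second homology has trivial self-intersection, and the gluing regions $T^2\times[0,1]$ contribute no Wall correction because the intersection forms of the gluing tori vanish; alternatively one checks directly that the intersection form on $H^2(X^0;\mathbb{R})$, which is supported on classes represented by tori with trivial normal bundle, is identically zero.

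The main obstacle is the Mayer--Vietoris bookkeeping in the second paragraph: tracking precisely how the four copies of $H^*(T^2)$ embed into the groups $H^*(Y_i)$, and crucially invoking \emph{all four} of $Y$, $Y'$, $Z$, $Z'$ being $\mathbb{Z}_2$-homology spheres at once (any single one of these hypotheses alone would not force $H^1(X^0;\mathbb{Z}_2)=0$). A secondary technical point is confirming that the Wall non-additivity term in the signature computation vanishes, which is needed because the pieces are glued along $T^2\times[0,1]$ rather than along closed $3$-manifolds; this follows from the triviality of the intersection forms on the gluing tori, and can be sidestepped by computing the intersection form of $X^0$ directly.
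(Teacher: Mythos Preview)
Your proof is correct and follows essentially the same Mayer--Vietoris strategy as the paper: both arguments trace everything back to the fact that the involution $(z,w)\mapsto(-z,w)$ on $T^2$ is isotopic to the identity, so $\tau^*$ acts trivially on $H^*(T^2)$ and hence on the cohomology of every piece. The paper packages this slightly more efficiently by checking $H_i(\,\cdot\,;\mathbb{Q})^{-\tau_*}=0$ on each piece and each gluing torus and then invoking Mayer--Vietoris once, rather than computing $H^2(X^0;\mathbb{Q})$ explicitly as a quotient of $\bigoplus H^1(T^2;\mathbb{Q})$; and for uniqueness it shows $H^1(X^0/\tau;\mathbb{Z}_2)=0$ rather than your $H^1(X^0;\mathbb{Z}_2)=0$, but either vanishing suffices via \cref{classification:real_spin}.

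One correction to your closing commentary: you do \emph{not} need all four of $Y,Y',Z,Z'$ to be $\mathbb{Z}_2$-homology spheres, and indeed \cref{exthm} only assumes this for $Y$ and $Y'$. Your own computation already reflects this: the injectivity of $H^1(Y_1)\oplus H^1(Y_2)\to H^1(T^2)$ (from $Y$) forces $\beta_1|_{T^2}=\beta_3|_{T^2}=\alpha\Rightarrow\beta_1=\beta_3=0\Rightarrow\alpha=0$, and then the injectivity from $Y'$ kills $\beta_2,\beta_4$. No hypothesis on $Z,Z'$ enters.
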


\begin{proof}
    Note that if a manifold $M$ is equipped with a smooth $\mathbb{Z}_2$-action $\tau$, then we have  
    \[
    b_k(M) - b_k(M/\tau) = \dim_{\mathbb{Q}}(H_k(M; \mathbb{Q})^{-\tau^*}).
    \]
    Note that we have  
\[
H_i(T^2; \mathbb{Q})^{-\tau_*} = 0 \qquad \text{ and } \qquad
H_i(Y; \mathbb{Q})^{-\tau_*} = H_i(Y_1; \mathbb{Q})^{-\tau_*} \oplus H_i(Y_2; \mathbb{Q})^{-\tau_*} = 0,
\]
which follow from the Mayer--Vietoris exact sequence. Therefore we have 
    \[
    H_i(Y_j \times [0,1]; \mathbb{Q})^{-\tau^*} = H_i(Y_j' \times [0,1]; \mathbb{Q})^{-\tau^*} = H_i(U \times T^2; \mathbb{Q})^{-\tau^*} = 0
    \]
    and 
    \[
    H_i([0,1] \times T^2; \mathbb{Q})^{-\tau^*} = 0, 
    \]
    which implies that  $H_i(X^0; \mathbb{Q})^{-\tau^*} = 0$. 
    The calculation of the signature follows from a similar Mayer-Vietoris exact sequence.

Next, we discuss real spin structures.  
First, observe that if $T^2$ is equivariantly embedded in a $\mathbb{Z}_2$-homology 3-sphere $Y$ equipped with a real spin structure, then the induced real spin structure on $T^2$ is unique up to isomorphism. Divide $Y$ into two pieces, $Y_1$ and $Y_2$, along the embedded $T^2$.  
We define a real spin structure on $T^2$ as one that extends to a real spin structure on $S^1 \times T^2$, invariant under the $S^1$-rotation action.  
From the classification result in \cref{classification:real_spiny}, we know that there are two such real spin structures on $T^2$ up to sign.  
However, one of them does not extend to $Y_-$.  
Therefore, the real spin structures on $T^2$ induced from $Y$, $Y'$, $Z$, and $Z'$ are all the same.

Using this real spin structure on $U \times T^2$, we can induce a real spin structure on $X^0$.  
By \cref{classification:real_spin}, the uniqueness follows from the vanishing of $H^1(X^0/\tau; \mathbb{Z}_2)$, which can be seen from the Mayer--Vietoris exact sequence in $\mathbb{Z}_2$-cohomology.
\end{proof}

Then, using \cite{KMT:2024, Mi23}, we obtain the real Bauer--Furuta invariant
\[
BF^R_{X^0} \colon SWF_R(Y) \wedge SWF_R(Y') \to SWF_R(Z) \wedge SWF_R(Z')
\]
for the unique real spin structure on $X^0$.  
With the opposite orientation, we have
\[
BF^R_{-X^0} \colon SWF_R(Z) \wedge SWF_R(Z') \to SWF_R(Y) \wedge SWF_R(Y').
\]
The proof of the excision theorem \Cref{exthm} reduces to the following theorem. We remark that the proof follows essentially the same strategy as the excision cobordism argument by Kronheimer–Mrowka \cite{kronheimer2010knots}. Originally the excision theorem is stated by Floer in \cite{floer1990instanton} with a different proof.  

\begin{thm}\label{homotopyinv}
    The two maps $BF^R_{X^0}$ and $BF^R_{-X^0}$ are homotopy inverses of each other up to $G$-equivariant stable homotopy.
\end{thm}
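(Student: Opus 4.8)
The plan is to show that the composites $BF^R_{-X^0} \circ BF^R_{X^0}$ and $BF^R_{X^0} \circ BF^R_{-X^0}$ are $G$-equivariantly stably homotopic to the identity by identifying each of them, up to homotopy, with the real Bauer--Furuta invariant of a closed-up cobordism which is in turn the product cobordism. First I would observe that, by the gluing/composition property of the real Bauer--Furuta invariant (which holds for these $\Z_2$-equivariant spin cobordisms exactly as in the non-equivariant case, cf.\ \cite{KMT:2024, Mi23}), the composite $BF^R_{-X^0}\circ BF^R_{X^0}$ is homotopic to $BF^R_{W}$ where $W = X^0 \cup_{Z \sqcup Z'} (-X^0)$ is the $\Z_2$-equivariant cobordism from $Y \sqcup Y'$ to itself obtained by stacking. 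The content of the theorem is then that this stacked cobordism $W$, after the standard neck-stretching deformation, is equivalent to the trivial product cobordism $([0,1]\times Y) \sqcup ([0,1]\times Y')$.

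The key geometric input is the Kronheimer--Mrowka ``rotating the neck'' argument from \cite{kronheimer2010knots}: inside $W$ there is a copy of $U \times T^2$ glued along $-(U\times T^2)$, which (after smoothing the corners of $U \cup (-U)$, a square-shaped piece) becomes a single $\Z_2$-equivariant region fibered over $T^2$. One shifts the four arms $Y_1, Y_1', Y_2, Y_2'$ around this region by an isotopy that uses the $S^1$-rotation symmetry of the $T^2$-factor; because the real spin structures on $T^2$ coming from all four pieces agree (the hypothesis of \cref{exthm}, verified in the preceding lemma), this isotopy is compatible with the real spin structure. After the isotopy the two ends $Y_1, -Y_2$ are reconnected directly, and likewise $Y_1', -Y_2'$, exhibiting $W$ as diffeomorphic, through $\Z_2$-equivariant real spin cobordisms, to $([0,1]\times Y) \sqcup ([0,1]\times Y')$ possibly connect-summed with some closed pieces of the form $S^1\times T^2$ carrying their rotation-invariant real spin structure. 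The homological bookkeeping ($\sigma = 0$, $b_2 - b_2(\cdot/\tau) = 0$, $b_1 - b_1(\cdot/\tau)=0$, established in the lemma for $X^0$ and hence for $W$) guarantees there are no stray suspension coordinates: the real Bauer--Furuta invariant of such a closed $S^1 \times T^2$ piece is the identity of $S^0$ in $\mathfrak{C}_G$, so $BF^R_W$ is stably homotopic to the identity. The same argument with the roles of $(Y,Y')$ and $(Z,Z')$ exchanged handles the other composite.

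To carry this out cleanly I would: (1) set up the smoothing of the corner region $U \cup_{\ell_i} (-U)$ and record that the glued-up neck region is $\Z_2$-equivariantly diffeomorphic to a neighborhood of an embedded $T^2$ in a larger manifold; (2) invoke the composition property of $BF^R$ to reduce to computing $BF^R$ of the stacked cobordism $W$ and of $-X^0 \cup X^0$; (3) perform the rotation isotopy, tracking the real spin structure via \cref{classification:real_spin} and the uniqueness statements already proved, to identify $W$ with the product cobordism up to the closed $S^1\times T^2$ summands; (4) compute $BF^R$ of the product cobordism (the identity, by the usual normalization of Bauer--Furuta invariants, adapted to the real setting) and of $S^1 \times T^2$ with its rotation-invariant real spin structure (also the identity, since the relevant moduli space is a single reducible with trivial obstruction, the signature and $b^+$ contributions vanishing); (5) assemble these to conclude both composites are homotopic to the respective identities.

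The main obstacle I expect is step (3): making the ``rotating the neck'' isotopy precise in the \emph{equivariant} and \emph{real-spin} setting. One must check that the rotation can be performed through maps commuting with $\tau$ and covered by maps of the spinor bundles compatible with the real structure $I$, which is where the hypothesis that all four $T^2$ real spin structures coincide (and the explicit model $X = [0,1]\times\widetilde Y \cup D^2\times\widetilde D^2$ used earlier to pin down such structures) gets used in an essential way. A secondary technical point is ensuring that the neck-stretching limit and finite-dimensional approximations behave well under the isotopy so that the homotopy of Bauer--Furuta maps is genuinely $G$-equivariant and stable; this follows the same pattern as the cobordism-composition arguments in \cite{KMT:2024, Mi23, Khan15}, but the bookkeeping of desuspensions in $\mathfrak{C}_G$ (the $\widetilde{\mathbb R}$ and $\mathbb C_+$ factors) needs to be done carefully so that no nontrivial suspension is left over.
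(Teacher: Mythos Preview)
Your overall strategy---reduce via the gluing formula to computing $BF^R$ of the stacked cobordism $W = X^0 \cup (-X^0)$---matches the paper. However, step (3) contains a genuine gap: $W$ is \emph{not} $\Z_2$-equivariantly diffeomorphic to the product $([0,1]\times Y)\sqcup([0,1]\times Y')$, and no isotopy will make it so. Gluing the two copies of the octagon $U$ along their $Z$- and $Z'$-edges produces an annulus, not two disjoint strips, so the central $T^2$-fibered region carries an extra circle's worth of topology that cannot be removed by any diffeomorphism. (Your ``closed pieces of the form $S^1\times T^2$'' are $3$-dimensional and so cannot be connect-summands of a $4$-manifold; and in any case the discrepancy between $W$ and the product is a surgery, not a connect sum.) What Kronheimer--Mrowka actually do in \cite{kronheimer2010knots}, and what the paper does here, is perform a fiberwise surgery along this core $S^1 \times T^2$: excise $D^1 \times S^1 \times T^2$ and glue in $S^0 \times D^2 \times T^2$; only after this surgery is the result the product cobordism.

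The real content of the proof is then that this surgery does not alter $BF^R$. One cuts along the separating hypersurface $T^3$ (equipped with the involution $(x,y,z)\mapsto(x,y,-z)$), checks that $b_1(T^3)-b_1^\tau(T^3)=0$ so that $SWF_R(T^3)$ is defined, and computes $SWF_R(T^3)\simeq S^0$ together with $BF^R_{D^1\times S^1\times T^2}=\pm\id$ and $BF^R_{S^0\times D^2\times T^2}=\pm\id$ via flat or positive scalar curvature metrics. The gluing formula along $T^3$ then yields $BF^R_W = BF^R_{W\setminus(D^1\times S^1\times T^2)} = BF^R_{\text{product}}$. Your step (4) correctly anticipates that $BF^R$ of a product cobordism is the identity, though note this is itself nontrivial and the paper defers it to \cite{SSto}. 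In short, the missing idea is: replace the nonexistent isotopy by a surgery along the embedded $T^3$, and justify invariance of $BF^R$ under that surgery via the computation $SWF_R(T^3)\simeq S^0$.
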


\begin{proof}
    We consider the compositions $X^0 \circ -X^0$ and $-X^0 \circ X^0$. One can also glue the real spin structures on $X^0$ and $-X^0$ to obtain real spin structures on $X^0 \circ -X^0$ and $-X^0 \circ X^0$.  
    
    From \cref{gluing_boundary_component_2} below, we have 
    \[
        BF_{X_0}^R \circ BF_{-X_0}^R \cong BF_{X_0 \circ -X_0}^R \qquad \text{ and } \qquad  BF_{-X_0}^R \circ BF_{X_0}^R \cong BF_{-X_0 \circ X_0}^R. 
    \]
    Therefore we only have to prove that $BF_{X_0 \circ -X_0}^R$ and $BF_{-X_0 \circ X_0}^R$ are homotopic to the identity. Since these cases are symmetric, we focus only on $-X^0 \circ X^0$.  We perform fiberwise surgeries (whose fibers are $T^2$) along $S^1 \times T^2$, as described in \cref{fig:excision3}, and obtain  
    \[
    \left( (X^0 \circ -X^0 ) \setminus D^1 \times S^1 \times T^2 \right) \cup (S^0 \times D^2 \times T^2) \cong Z \times I \sqcup Z' \times I,
    \]
    where the $\mathbb{Z}_2$-action on $S^0 \times D^2 \times T^2$ is given by $\operatorname{id} \times \operatorname{id} \times \tau$ with $\tau (u,v) = (u, -v)$.  The real spin structure on $X^0 \circ -X^0$ extends naturally to the surgery  
    \[
    \left( (X^0 \circ -X^0 ) \setminus D^1 \times S^1 \times T^2 \right) \cup (S^0 \times D^2 \times T^2),
    \]
    which coincides with the product real spin structure on $Z \times I \sqcup Z' \times I$.  


Now, using the gluing formula \Cref{gluingformula} for the real Bauer--Furuta invariants from \cite{Mi23}, which is stated below, we obtain:
\begin{align*}
    BF^R_{X^0 \circ - X^0} &= BF^R_{X^0 \circ - X^0\setminus D^1\times S^1  \times T^2}  \circ_{SWF_R(T^3)} BF^R_{ D^1\times S^1  \times T^2} \\
    &= BF^R_{X^0 \circ - X^0\setminus D^1\times S^1  \times T^2} \\
    &= BF^R_{X^0 \circ - X^0\setminus D^1\times S^1  \times T^2}  \circ_{SWF_R(T^3)}  BF^R_{S^0 \times D^2 \times T^2}\\
    &= BF^R_{(X^0 \circ - X^0\setminus D^1\times S^1  \times T^2) \cup S^0 \times D^2 \times T^2} \\
    &= BF^R_{Z\times I \sqcup Z' \times I } = \id, 
\end{align*}
where:  
\begin{itemize}
    \item[(i)] The $G$-stable homotopy type $SWF_R(T^3)$ is the real Floer homotopy type for $T^3$ equipped with the involution  
    \[
    S^1\times S^1\times S^1\to S^1\times S^1\times S^1; \qquad (x,y,z) \mapsto (x, y, -z)
    \]
    with the real spin structure. Note that we have  
    \[
    b_1(T^3) -  b_1^\tau(T^3)=3-3=0.
    \]
    Thus, the real Floer homotopy type for $T^3$ with this involution can be defined analogously to the usual Floer homotopy type for a rational homology 3-sphere.
    
    \item[(ii)] The $G$-stable map  
    \[
    BF^R_{ D^1\times S^1  \times T^2} \colon SWF_R(T^3)  \to S^0
    \]
    is the real Bauer--Furuta invariant for $D^1\times S^1  \times T^2$ equipped with the involution  
    \[
    D^1\times S^1  \times T^2 \to D^1\times S^1  \times T^2; \qquad (t,x, y,z) \mapsto (t, x, y, -z).
    \]
    This 4-manifold with involution has a unique extension of the real spin structure of $T^3$ defined in (i).
    
    \item[(iii)] The $G$-equivariant stable map  
    \[
    BF^R_{X^0 \circ - X^0\setminus D^1\times S^1  \times T^2} \colon SWF_R(Y) \wedge SWF_R(Y') \to SWF_R(Z) \wedge SWF_R(Z') \wedge SWF_R(T^3)
    \]
    is the real Bauer--Furuta invariant for the restricted real spin structure on $X^0 \circ - X^0\setminus D^1\times S^1  \times T^2$ inherited from the real spin structure on $X^0 \circ - X^0$.

    \item[(iv)] The $G$-equivariant stable map  
    \[
    BF^R_{S^0 \times D^2 \times T^2} \colon SWF_R(T^3) \to S^0
    \]
    is the real Bauer--Furuta invariant for $S^0 \times D^2 \times T^2$ equipped with the involution  
    \[
    S^0 \times D^2 \times T^2 \to 
    S^0 \times D^2 \times T^2; \qquad (a, z,x,y) \mapsto (a, z, x,-y)
    \]
    and the real spin structure extending the real spin structure described in (i).

    \item[(v)] The $G$-equivariant stable map  
    \[
    BF^R_{Z\times I \sqcup Z' \times I } \colon SWF_R(Y) \wedge SWF_R(Y') \to SWF_R(Y) \wedge SWF_R(Y')
    \]
    is the real Bauer--Furuta invariant for the product real spin structure on $Z\times I \sqcup Z' \times I$.
\end{itemize}

From straightforward observations by taking flat or positive scalar curvature metrics with cylindrical metrics on the boundaries, we obtain  
\begin{align*}
    SWF_R(T^3) = S^0, \qquad 
    BF^R_{D^1\times S^1 \times T^2 } = \pm \operatorname{id} \colon S^0 \to S^0, \qquad \text{ and }\qquad 
    BF^R_{S^0 \times D^2 \times T^2} = \pm \operatorname{id} \colon S^0 \to S^0.
\end{align*}
However, the statement  
\[
BF^R_{Z\times I \sqcup Z' \times I } = \operatorname{Id}
\]
is nontrivial. This can be verified by an equivariant modification of the proof of $BF_{Y\times I} = \operatorname{id}$, which will appear in \cite{SSto}. This completes the proof.
\end{proof}

\begin{figure}[t]
    \centering \includegraphics[width=0.7\linewidth]{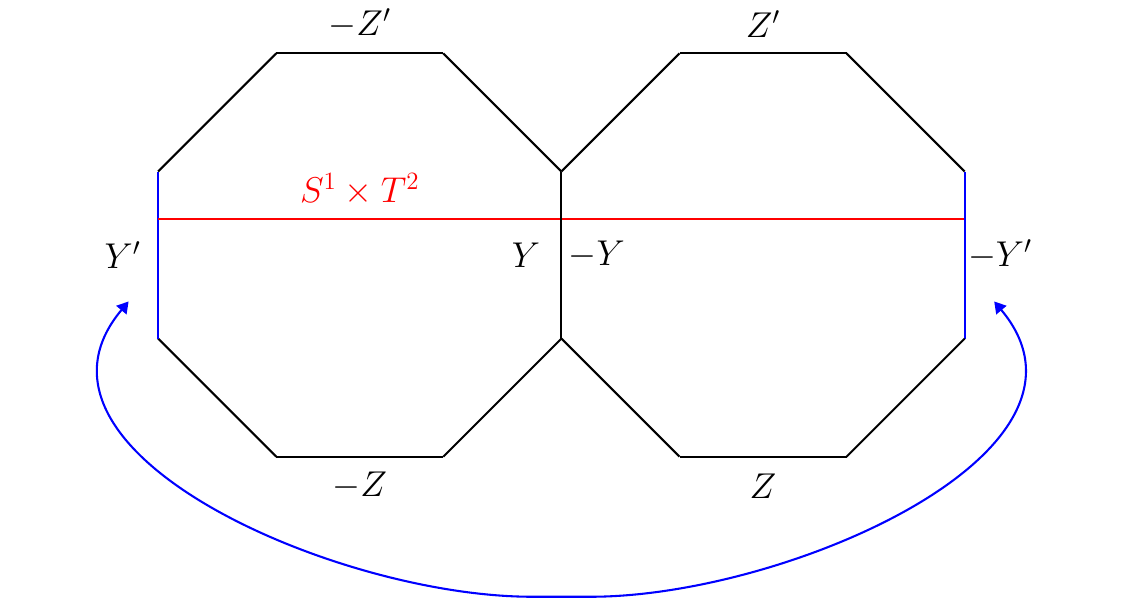}
    \caption{ The manifold $-X^0 \circ X^0$ and fiberwise surgeries.}
    \label{fig:excision3}
\end{figure}


We use the following type of gluing formula in real Seiberg--Witten theory, proven in \cite{Mi23}:  

\begin{thm}[{\cite[Theorem 2.12]{Mi23}}]\label{gen_glue}
Let $X_0$ and $X_1$ be compact, oriented $4$-manifolds with boundaries $\partial X_0 = Y$ and $\partial X_1 = -Y$.  
Suppose that $\tau_0$, $\tau_1$, and $\tau'$ are involutions on $X_0$, $X_1$, and $Y$, respectively, such that $\tau_0|_Y = \tau_1|_Y = \tau'$.  
Let $X := X_0 \cup_Y X_1$ and let $\tau := \tau_0 \cup_{\tau'} \tau_1$ be the induced involution on $X$.

Assume that
\[
H^1(X; \mathbb{R})^{-\tau^*} \cong H^1(X_0; \mathbb{R})^{-\tau_0^*} \cong H^1(X_1; \mathbb{R})^{-\tau_1^*} \cong H^1(Y; \mathbb{R})^{-{\tau'}^*} = 0,
\]
\[
H^0(X; \mathbb{Z})^{-\tau^*} = 0, \qquad \text{and} \qquad H^0(X_0; \mathbb{Z})^{-\tau_0^*} = 0.
\]
Also suppose that $X/\tau$, $X_0/\tau_0$, $X_1/\tau_1$, and $Y/\tau'$ are all connected.

Then the following statements hold:
\begin{itemize}
    \item Let $(\mathfrak{s}, I)$ be a real $\mathrm{spin}^c$ structure on $X$ that covers $\tau$, and let $(\mathfrak{t}, I)$ be the induced real $\mathrm{spin}^c$ structure on $Y$ via restriction. Then there exist unique relative real $\mathrm{spin}^c$ structures $(\mathfrak{s}_i, I_i)$ on $(X_i, Y)$ covering $\tau_i$ for $i = 0, 1$, such that 
    \[
    (\mathfrak{s}, I)|_{X_i} = (\mathfrak{s}_i, I_i), \qquad \text{and} \qquad (\mathfrak{s}_i, I_i)|_Y = (\mathfrak{t}, I).
    \]
    
    \item Let $\epsilon'$ denote the Spanier--Whitehead duality map in $\mathfrak{C}_G$. Then there exists a map
    \[
    \epsilon \colon 
    \pi_{V_{X_0}}^{\mathrm{st}}\left(\Sigma^{H^+(X_0)^{-\tau^*}} SWF(Y, \mathfrak{t}, I)\right) 
    \times 
    \pi_{V_{X_1}}^{\mathrm{st}}\left(\Sigma^{H^+(X_1)^{-\tau^*}} SWF(-Y, \mathfrak{t}, I)\right)
    \to 
    \pi_{V_X}^{\mathrm{st}}\left(S^{H^+(X)^{-\tau^*}}\right),
    \]
    given by composing the smash product with the map $\epsilon'$, where $V_{X_0}$, $V_{X_1}$, and $V_X$ are objects in $\mathfrak{C}_G$ defined by
    \[
    V_{X_i}^+ := \left[\left(\ker D_{\widetilde{g}}^+,\, 0,\, \dim_{\mathbb{C}}(\operatorname{coker} D_{\widetilde{g}}^+) + \frac{n(Y, \mathfrak{t}, g)}{2} \right)\right],
    \]
    in terms of the indices of AHS operators $D_{\widetilde{g}}$ or Dirac operators $D_{\widetilde{g}}^+$ with APS boundary conditions. Here, $\pi_V^{\mathrm{st}}$ denotes the equivariant stable homotopy group with respect to the representation $V$, and $n(Y, \mathfrak{t}, g)$ is the correction term defined in~\cite{Mi23}. Moreover, the real Bauer--Furuta invariant satisfies the following gluing formula:
    \[
    BF^R(X, \mathfrak{s}, I) = \epsilon\left(BF^R(X_0, \mathfrak{s}_0, I_0) \wedge BF^R(X_1, \mathfrak{s}_1, I_1)\right).
    \]
\end{itemize}
\end{thm}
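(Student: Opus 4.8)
The plan is to prove the gluing formula by the standard finite-dimensional-approximation-plus-neck-stretching method, as in Manolescu \cite{Ma03}, Khandhawit \cite{Khan15}, and Kronheimer--Mrowka \cite{kronheimer2010knots}, carried out $I$-equivariantly. The hypotheses $H^1(X;\mathbb{R})^{-\tau^*}\cong H^1(X_0;\mathbb{R})^{-\tau_0^*}\cong H^1(X_1;\mathbb{R})^{-\tau_1^*}\cong H^1(Y;\mathbb{R})^{-{\tau'}^*}=0$ guarantee that, after passing to $I$-fixed points, the configuration spaces of $X$, $X_0$, $X_1$, and $Y$ behave exactly like those of rational-homology spheres and their fillings, so the whole argument runs in parallel with the non-equivariant case, with the residual $G=\langle j\rangle$-action carried along formally; the vanishing of $H^0(X;\mathbb{Z})^{-\tau^*}$ and $H^0(X_0;\mathbb{Z})^{-\tau_0^*}$ plays the role of connectedness of the quotients and rules out the stabilizer ambiguity in the reducible locus.

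First I would dispose of the topological bullet. The existence and uniqueness of relative real $\mathrm{spin}^c$ structures $(\mathfrak{s}_i,I_i)$ on $(X_i,Y)$ restricting to $(\mathfrak{s},I)$ and to $(\mathfrak{t},I)$ follows from the relative version of the classification in \cref{lem:spinc4manifold} (and \cref{classification:real_spin} in the spin case), combined with the Mayer--Vietoris sequence for $X=X_0\cup_Y X_1$ and the hypotheses $H^1(-;\mathbb{R})^{-\tau^*}=0$, $H^0(-;\mathbb{Z})^{-\tau^*}=0$, which kill the groups parametrizing the ambiguity. For the analytic part, equip $X$, $X_0$, $X_1$ with $\tau$-invariant metrics that are cylindrical near $Y$, insert a neck $[-R,R]\times Y$, and on each piece impose Khandhawit's double Coulomb condition \cite{Khan15} so that the Seiberg--Witten map restricted to the global $I$-invariant Coulomb slice becomes a map of (pre-)Hilbert bundles whose linearization is Fredholm with APS-type boundary conditions; taking $I$-fixed points commutes with finite-dimensional approximation, as noted in the remark following the construction of $SWF_R$. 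This produces $G$-equivariant finite-dimensional approximations $f_{X_i}$, $f_X$ whose domains and targets are the desuspensions $V_{X_i}$, $V_X$ assembled from $\ker D_{\widetilde g}^+$, $\operatorname{coker}D_{\widetilde g}^+$ and the correction terms $n(Y,\mathfrak{t},g)/2$, exactly as in the statement.

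Next comes the gluing itself. As $R\to\infty$, a solution of the approximated Seiberg--Witten equations on the stretched $X$ decomposes, via restriction to the two sides of the neck, into a pair of solutions on $X_0$ and $X_1$ whose boundary values on $Y$ and $-Y$ match; the $I$-invariant analogues of the uniform exponential-decay and compactness estimates of \cite{Ma03, Khan15} then show that for $R$ large the map $f_X$ is $G$-equivariantly stably homotopic to the composite of $f_{X_0}\wedge f_{X_1}$ with the Spanier--Whitehead duality pairing $SWF_R(Y)\wedge SWF_R(-Y)\to S^0$, i.e. to $\epsilon'$ smashed in. This is precisely $BF^R(X,\mathfrak{s},I)=\epsilon\bigl(BF^R(X_0,\mathfrak{s}_0,I_0)\wedge BF^R(X_1,\mathfrak{s}_1,I_1)\bigr)$. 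That the various suspensions match — the objects $V_X$, $V_{X_0}$, $V_{X_1}$ of $\mathfrak{C}_G$ together with the $H^+(-)^{-\tau^*}$ summands — reduces to additivity of the APS index for the AHS operator and the Dirac operator under cutting along $Y$, together with the fact, recalled in the construction of $SWF_R(Y)$, that the $\mathbb{R}$-spectral flow of a family of real Dirac operators is half that of the corresponding family of ordinary Dirac operators, which is exactly how $n(Y,\mathfrak{t},g)$ was normalized.

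The main obstacle, as always in such gluing arguments, is the analysis on the neck: establishing uniform, $R$-independent elliptic estimates and exponential decay for $I$-invariant configurations in double Coulomb gauge, so that the finite-dimensional approximations can be chosen compatibly on the pieces and on the glued manifold and the homotopy between $f_X$ and the composition can be built. In the real setting one must check that the relevant linearized operators remain Fredholm and that the estimates are uniform under the $\mathbb{Z}_2$-action; since the involution is odd (fixed-point set of codimension two), the $I$-invariant part of the spinor bundle is a genuine complex bundle carrying the $j$-action, and its reducible locus behaves as in the closed rational-homology-sphere case, so no new small-eigenvalue phenomena occur — but carrying this out carefully and matching the resulting correction terms is where the real work lies. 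All of this is done in \cite{Mi23}; here we only indicate the structure of the argument.
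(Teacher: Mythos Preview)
The paper does not give a proof of this statement: it is quoted verbatim as \cite[Theorem~2.12]{Mi23}, and the authors simply invoke it as an input to the excision argument. So there is no proof in the paper to compare your attempt against.

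That said, your outline is an accurate description of the standard strategy for such gluing theorems (finite-dimensional approximation in double Coulomb gauge, neck-stretching, matching via Spanier--Whitehead duality, APS index additivity for the suspension bookkeeping), and this is indeed the method used in \cite{Mi23}. One small correction: you refer to the involution as ``odd (fixed-point set of codimension two),'' but the theorem as stated covers both the free and non-free cases, and in the applications in this paper the involutions on pieces like $\widetilde{S^4\setminus\nu(K)}$ or $\widetilde{D^3\times S^1}$ are free; the relevant point is not the codimension of the fixed set but the vanishing of $H^1(-;\mathbb{R})^{-\tau^*}$, which is what ensures the $I$-invariant configuration space behaves like that of a rational homology sphere.
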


\begin{cor}\label{gluingformula}
Let $X_0$ and $X_1$ be real spin $4$-manifolds satisfying the assumptions of \cref{gen_glue}, each having a boundary component identified with $T^3$, such that the restrictions of their real spin structures to $T^3$ coincide with the one described in \textup{(i)} of the proof of \cref{homotopyinv}. Let $X := X_0 \cup_{T^3} X_1$ be equipped with the induced $\mathbb{Z}_2$-action and the glued real spin structure. Then
\[
BF^R_X = BF^R_{X_0} \circ_{SWF_R(T^3) = S^0} BF^R_{X_1}
\]
as $G$-equivariant stable maps, up to $G$-equivariant homotopy.

An analogous statement holds when $T^3$ is replaced with $S^1 \times S^2$, equipped with the involution
\[
S^1 \times S^2 \to S^1 \times S^2, \qquad (x, y) \mapsto (-x, y),
\]
and the unique real spin structure.
\end{cor}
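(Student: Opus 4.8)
The plan is to obtain \cref{gluingformula} as a specialization of the general real gluing formula \cref{gen_glue}. Two things need to be checked: that the hypotheses of \cref{gen_glue} hold when the cutting hypersurface is $Y = T^3$ (resp.\ $Y = S^1\times S^2$) with the prescribed involution, and that $SWF_R(Y) = S^0$, so that the Spanier--Whitehead pairing $\epsilon$ appearing in \cref{gen_glue} degenerates to honest composition of $G$-equivariant stable maps. First I would verify the hypotheses. The involution $\tau'$ on $T^3 = (S^1)^3$ sending $(x,y,z)\mapsto(x,y,-z)$ is free --- it is rotation by $\pi$ on the last factor --- acts trivially on $H^*(T^3;\Q)$, and has connected quotient $T^3/\tau' \cong T^3$; hence $H^1(T^3;\R)^{-{\tau'}^*} = 0$, $H^0(T^3;\Z)^{-{\tau'}^*} = 0$, and $b_1(T^3) - b_1(T^3/\tau') = 0$, so $SWF_R(T^3)$ is defined in the sense of the remark following the construction of $SWF_R$ for homology $S^1\times S^2$. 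The same holds for $(S^1\times S^2,\ (x,y)\mapsto(-x,y))$. The remaining conditions --- vanishing of $H^1(X;\R)^{-\tau^*}$, $H^1(X_i;\R)^{-\tau_i^*}$, $H^0(X;\Z)^{-\tau^*}$, $H^0(X_0;\Z)^{-\tau_0^*}$, and connectedness of the quotients --- are precisely the standing assumption that $X_0,X_1$ satisfy the hypotheses of \cref{gen_glue}. Thus \cref{gen_glue} (in its relative form, allowing $X_0,X_1$ to have further boundary components) applies: the glued real spin structure on $X = X_0\cup_Y X_1$ restricts to the given ones on $X_0$ and $X_1$ by the uniqueness clause, and
\[
BF^R_X = \epsilon\!\left(BF^R_{X_0}\wedge BF^R_{X_1}\right),
\]
where $\epsilon$ is the smash product composed with the Spanier--Whitehead duality map $\epsilon'$ of $\mathfrak{C}_G$ pairing $SWF_R(Y)$ with $SWF_R(-Y)$, together with the index-bundle identification $V_X \cong V_{X_0}\oplus V_{X_1}$ and the splitting of the $H^+(X)^{-\tau^*}$-suspension.

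Second I would identify $SWF_R(Y) = S^0$ for $Y\in\{T^3,\ S^1\times S^2\}$. Equipping $Y$ with an invariant flat (resp.\ positive scalar curvature) product metric with cylindrical ends, one checks directly that on the $I$-invariant configuration space the reducible locus is a single point --- it is parametrized by $H^1(Y;\R)^{-{\tau'}^*} = 0$ --- and that there are no $I$-invariant irreducible solutions, so the finite-dimensional Conley index is $S^0$; the real spectral-flow and correction-term shifts in the definition of $SWF_R$ vanish for such a metric, giving $SWF_R(Y) = [(S^0,0,0)] = S^0$ in $\mathfrak{C}_G$. Hence $SWF_R(-Y) = S^0$ as well, and $\epsilon'$ is the canonical self-pairing $S^0\wedge S^0 \xrightarrow{\ \simeq\ } S^0$, i.e.\ the identity. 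Moreover, additivity of the signature and of the relevant APS indices along $Y$ --- again a reflection of the vanishing of all three-dimensional data for these cutting manifolds --- trivializes the $V_X \cong V_{X_0}\oplus V_{X_1}$ and $H^+$ bookkeeping inside $\epsilon$. Feeding this into the displayed formula, $\epsilon$ collapses to composition over $SWF_R(Y) = S^0$, yielding $BF^R_X = BF^R_{X_0}\circ_{SWF_R(Y)=S^0} BF^R_{X_1}$.

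I expect the only point requiring genuine care to be the identification $SWF_R(Y) = S^0$ for the two cutting manifolds, together with the accompanying check that the index-theoretic pieces of $\epsilon$ (the objects $V_{X_i}$ of $\mathfrak{C}_G$ and the $H^+$-suspensions) carry no residual data so that $\epsilon$ is literally composition; both reduce to the flat/positive-scalar-curvature model computation above and to standard additivity of APS indices and of the signature under gluing along $Y$. Everything else is a direct invocation of \cref{gen_glue}, which is \cite[Theorem 2.12]{Mi23}.
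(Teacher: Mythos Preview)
Your proposal is correct and matches the paper's approach: the corollary is stated without a separate proof, being an immediate specialization of \cref{gen_glue} once one knows $SWF_R(T^3)=S^0$ and $SWF_R(S^1\times S^2)=S^0$ (computations the paper records in the proof of \cref{homotopyinv} via the flat/PSC metric argument you give). Your explicit unwinding of how the duality pairing $\epsilon$ collapses to composition when the cutting Floer type is $S^0$ is the right justification for the passage from the $\epsilon$-formula of \cref{gen_glue} to the composition formula in the corollary, and is left implicit in the paper.
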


To prove \cref{homotopyinv}, we use the following version of the gluing theorem, which is not covered by \cref{gluingformula}.

\begin{lem}\label{gluing_boundary_component_2}
Let $X_0$, $X_1$, and $X$ be $4$-manifolds, and let $\tau_0$, $\tau_1$, and $\tau$ be involutions on $X_0$, $X_1$, and $X$, respectively.  Suppose that $\partial X_0 = -\partial X_1 = Y$ is a $3$-manifold with two connected components $Y = Y_0 \cup Y_1$, and let $\tau'$ be an involution on $Y$ that preserves each component. Assume that $X_0$, $X_1$, $X$, $Y$, $\tau_0$, $\tau_1$, $\tau$, and $\tau'$ satisfy the hypotheses of \cref{gluingformula}, except for the condition that $Y/\tau'$ is connected. In particular, assume that $H^0(Y; \mathbb{Z})^{-{\tau'}^*} = 0$. Then the real Bauer--Furuta invariant satisfies the gluing formula:
\[
BF^R(X, \mathfrak{s}, I) = \epsilon\left(BF^R(X_0, \mathfrak{s}_0, I_0) \wedge BF^R(X_1, \mathfrak{s}_1, I_1)\right).
\]
\end{lem}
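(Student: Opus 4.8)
The plan is to adapt the proof of \cref{gen_glue}, that is, of \cite[Theorem 2.12]{Mi23}. Inspecting that argument, the hypothesis that $Y/\tau'$ be connected is used only to streamline the notation: it guarantees a single ``restriction to the boundary'' channel, a single correction term $n(Y,\mathfrak t,g)$, and a single Spanier--Whitehead duality map to keep track of. What is genuinely needed is that $\tau'$ restrict to an admissible involution on each connected component of $Y$ (odd or free, subject to the cohomological conditions of \cref{gen_glue}), so that over each component one really has a real $3$-manifold; this is exactly what the condition $H^0(Y;\Z)^{-{\tau'}^{*}}=0$ encodes once $\tau'$ is required to preserve the components, and we have assumed it. All the remaining hypotheses of \cref{gen_glue} are assumed verbatim, so the proof of that theorem will go through once I check that it localizes over the two components of $Y=Y_0\cup Y_1$.

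First I would record the elementary splittings that make this localization possible. Writing $\tau_i':=\tau'|_{Y_i}$, one has in $\mathfrak C_G$
\[
SWF_R(Y)=SWF_R(Y_0)\wedge SWF_R(Y_1),\qquad SWF_R(-Y)=SWF_R(-Y_0)\wedge SWF_R(-Y_1),
\]
the correction terms add, $n(Y,\mathfrak t,g)=n(Y_0,\mathfrak t_0,g_0)+n(Y_1,\mathfrak t_1,g_1)$, the APS indices entering the definition of the objects $V_{X_0},V_{X_1},V_X$ add over the two boundary components, and the duality map $\epsilon'$ for $SWF_R(Y)$ is the smash product of the duality maps for the two factors. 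Then I would run the neck-stretching step of \cite{Mi23}: one inserts a long neck $[-T,T]\times Y=([-T,T]\times Y_0)\sqcup([-T,T]\times Y_1)$ into $X$, and the a priori bounds, the exponential decay along the neck, and the gluing/interpolation estimates of \cite{Ma03,Khan15,Mi23} are all local along $Y$, hence apply on each of the two pieces separately. Consequently, for $T\gg 0$ the finite-dimensional approximation of the $I$-invariant Seiberg--Witten map of $X$ factors, up to $G$-equivariant homotopy, through $SWF_R(Y_0)\wedge SWF_R(Y_1)$ with no interaction between the two components, and composing with $\epsilon'$ as above yields precisely
\[
BF^R(X,\mathfrak s,I)=\epsilon\big(BF^R(X_0,\mathfrak s_0,I_0)\wedge BF^R(X_1,\mathfrak s_1,I_1)\big).
\]

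As a consistency check, and an alternative route, one can instead reduce directly to \cref{gen_glue}. Since the $\tau'$-fixed set meets each $Y_i$ in all of our applications (the $Y_i$ being double branched covers), attach a $\Z_2$-equivariant $1$-handle along a pair of fixed points to obtain a real spin cobordism $C\colon Y_0\sqcup Y_1\to Y_0\# Y_1$ onto the equivariant connected sum; set $\widehat X_0:=X_0\cup_Y C$ and let $\widehat X_1$ be obtained from $X_1$ by gluing the turned-around copy $\overline C$ onto its $-Y$ boundary. Then $C$ and $\overline C$ glue to a product $Y\times I$ (the $1$-handle cancels the dual $2$-handle), so $\widehat X_0\cup_{Y_0\# Y_1}\widehat X_1\cong X$, and \cref{gen_glue} applies to this connected gluing; combining it with the real connected-sum equivalence $SWF_R(Y_0\# Y_1)\simeq SWF_R(Y_0)\wedge SWF_R(Y_1)$, realized by $BF^R_C$ and proved as in the non-equivariant case, together with $BF^R_{\widehat X_i}=BF^R_{X_i}$ composed with this equivalence, recovers the formula for $X$.

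The main obstacle is of a bookkeeping rather than conceptual nature: one must verify carefully that every estimate in the proof of \cref{gen_glue} is genuinely local along $Y$ and therefore survives the passage to a disconnected $Y$ with no cross terms between $Y_0$ and $Y_1$, and that all index and correction-term contributions split as sums over components. For the alternative argument, the one point that requires care is the real connected-sum formula for $SWF_R$ along $\tau'$-fixed points, which is the expected equivariant analogue of the familiar connected-sum formula and introduces no new difficulty.
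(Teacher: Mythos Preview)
Your overall strategy---rerun the proof of \cref{gen_glue} and check that the connectedness of $Y/\tau'$ can be dropped---is the same as the paper's. However, you mis-diagnose \emph{where} that hypothesis is actually used. You assert that connectedness ``is used only to streamline the notation'' and that the remaining verification is ``bookkeeping'' about estimates being local along $Y$. The paper's proof shows this is not the case: there is one genuine technical step (labelled Step~5 in the proof of \cite[Theorem~2.12]{Mi23}) where connectedness enters, namely the homotopy of the boundary-condition operator $D_H$. There one must deform the boundary condition by a $1$-parameter family of gauge transformations, and for this one needs the identification
\[
i\Omega^0(Y)^{-{\tau'}^*}=i\Omega^0_0(Y)^{-{\tau'}^*}:=\Bigl\{\,f\in i\Omega^0(Y)^{-{\tau'}^*}\ \Big|\ \textstyle\int_{Y_j}f=0\ \text{for each component }Y_j\subset Y\,\Bigr\}.
\]
When $Y$ is connected this is automatic; when $Y$ is disconnected it is exactly the hypothesis $H^0(Y;\Z)^{-{\tau'}^*}=0$ that forces the two spaces to coincide. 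This is the entire content of the lemma, and your write-up does not locate it.

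Your neck-stretching paragraph and the splittings $SWF_R(Y)=SWF_R(Y_0)\wedge SWF_R(Y_1)$ etc.\ are correct but beside the point: the analytic estimates do localize, and that is not where the difficulty lies. Your alternative route via an equivariant $1$-handle and a real connected-sum formula would presumably work, but it imports an additional statement (the real analogue of the connected-sum formula for $SWF_R$) that itself needs proof, whereas the paper's argument is a one-line observation once Step~5 is isolated.
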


\begin{proof}
The proof of this lemma is identical to that of \cref{gluingformula}, except for Step 5. In Step 5, we consider the homotopy of the boundary condition operator $D_H$. Here, we must carefully distinguish between the two spaces
\[
i\Omega^0(Y)^{-{\tau'}^*} \qquad \text{and} \qquad i\Omega^0_0(Y)^{-{\tau'}^*} := \left\{ f \in i\Omega^0(Y)^{-{\tau'}^*} \,\middle|\, \int_{Y_j} f = 0 \text{ for all connected components } Y_j \subset Y \right\}
\]
in order to construct a $1$-parameter family of gauge transformations to deform the boundary condition.

If $Y$ is connected, then these two spaces coincide, and Step 5 proceeds without difficulty. This is the only step where the assumption that $Y/\tau'$ and $X_1/\tau_1$ are connected is used. However, if we assume that $H^0(Y; \mathbb{Z})^{-{\tau'}^*} = 0$, then we again have
\[
i\Omega^0(Y)^{-{\tau'}^*} = i\Omega^0_0(Y)^{-{\tau'}^*},
\]
so the gluing formula can be proved in exactly the same manner as in the proof of \cref{gluingformula}.
\end{proof}


\section{Proof of the applications}\label{Proof of the applications}

\subsection{Real $10/8$-type inequalities}\label{real 10/8 inequalities}
We first present a slightly stronger version of the real $10/8$-type inequality compared to \cite[Theorem 1.3]{KMT:2021}. Before stating the result, we introduce the following definition:

\begin{defn}
    We say that the real Floer homotopy type of $SWF_R(K)$ is \emph{strongly spherical} if 
    $
     SWF_R(K) $
    is $G$-equivariant stable homotopy equivalent to $\mathbb{C}_+^m$ for some $m$.
\end{defn}

Now, we state the inequality. 
\begin{thm}\label{thm:KGsplit}
Let $K$ and $K'$ be knots in $S^3$, and let $X$ be an oriented smooth compact connected cobordism from $S^3$ to $S^3$ with $H_1(X) = 0$. Suppose that $S$ is an oriented smooth compact connected properly embedded cobordism in $X$ from $K$ to $K'$ such that the homology class $[S]$ of $S$ is divisible by $2$ and that  
the homology class $[S]/2$ in $H_2(X, \partial X)$ reduces to
$w_2(X)$. Then, we have  
\[
\kappa_R(K) - \frac{1}{16} \sigma(\Sigma_2(S)) \leq \kappa_R(K') + b^+(\Sigma_2(S)) - b^+(X).
\]
Moreover, if $b^+(\Sigma_2(S)) - b^+(X) \geq 1$ and the real Floer homotopy type of $SWF_R (K)$ is  strongly spherical,
then we have  
\[
\kappa_R(K) - \frac{1}{16} \sigma(\Sigma_2(S)) + \frac{1}{2} \leq \kappa_R(K') + b^+(\Sigma_2(S)) - b^+(X).
\]
\end{thm}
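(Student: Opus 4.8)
\emph{Proof proposal.} The strategy is to deduce both inequalities from the single $G$-equivariant stable map furnished by the real Bauer--Furuta invariant of the double branched cover
\[
\widetilde X := \Sigma_2(S),
\]
viewed as a $\mathbb{Z}_2$-equivariant cobordism from $\Sigma_2(K)$ to $\Sigma_2(K')$ with covering involution $\tau$, whose fixed set is the preimage of $S$ and has codimension two.

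First I would set up the Floer-theoretic input. Since $H_1(X)=0$ and $X$ is a cobordism from $S^3$ to $S^3$, the ends $\Sigma_2(K)$ and $\Sigma_2(K')$ are $\mathbb{Z}_2$-homology $3$-spheres carrying their canonical real spin structures by \cref{classification_real_spinc}. The hypothesis that $[S]$ is divisible by $2$ with $[S]/2$ reducing to $w_2(X)$ is precisely what makes $\widetilde X$ spin and, since $\tau$ is an odd involution, lets $\tau$ be covered by a real spin structure on $\widetilde X$ restricting to the canonical ones on the boundary, by \cref{classification:real_spin}(i). A Mayer--Vietoris/transfer computation then identifies the relevant homological quantities:
\[
\sigma(\widetilde X) = \sigma(\Sigma_2(S)), \qquad b^+(\widetilde X) - b^+_\tau(\widetilde X) = b^+(\Sigma_2(S)) - b^+(X), \qquad H^1(\widetilde X;\mathbb{R})^{-\tau^*} = 0 .
\]
Feeding $\widetilde X$ into the cobordism-map construction (as in \cref{realBF}, and in the knot setting of \cite{KMT:2024, Mi23}) produces a $G$-equivariant stable map
\[
BF^R_{\widetilde X}\colon SWF_R(K) \wedge \big(\widetilde{\mathbb{R}}^{-\sigma(\Sigma_2(S))/8}\big)^+ \longrightarrow SWF_R(K') \wedge \big(\mathbb{R}^{\,b^+(\Sigma_2(S)) - b^+(X)}\big)^+ .
\]

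Next I would run the $K$-theoretic argument. Applying the doubling functor $\mathfrak{C}_G \to \mathfrak{D}_G$ to $BF^R_{\widetilde X}$ and then $G$-equivariant complex $K$-theory, the first inequality is the monotonicity of $\kappa_R$ under such a map: it is the argument of \cite[Theorem 1.3]{KMT:2021} applied to $\widetilde X$ (now with arbitrary $X$ and $S$), using the relations $w^2 = 2w$ and $w - 2z + z^2 = 0$ in $R(G)$ recorded in \cref{representations} to track the $K$-theoretic Euler classes of the $\widetilde{\mathbb{R}}$- and $\mathbb{C}_+$-summands, which yields
\[
\kappa_R(K) - \tfrac{1}{16}\sigma(\Sigma_2(S)) \leq \kappa_R(K') + b^+(\Sigma_2(S)) - b^+(X).
\]
For the refined bound, set $b := b^+(\Sigma_2(S)) - b^+(X) \geq 1$ and assume $SWF_R(K)$ strongly spherical, so the source of $BF^R_{\widetilde X}$ is, in $\mathfrak{C}_G$, a representation sphere built only from $\widetilde{\mathbb{R}}$ and $\mathbb{C}_+$. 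Restricting $BF^R_{\widetilde X}$ to $H=\{\pm 1\}$-fixed points, the source has $H$-fixed set $S^0$ (both $\widetilde{\mathbb{R}}$ and $\mathbb{C}_+$ have trivial $H$-part), while the $H$-fixed set of the target is $(\widetilde{\mathbb{R}}^s)^+ \wedge (\mathbb{R}^b)^+$, a sphere of dimension $\geq b \geq 1$; hence $BF^R_{\widetilde X}$ is null-homotopic on $H$-fixed points. As in Furuta's $b^+ \geq 1$ improvement, here in the doubled $\mathbb{Z}_4$-equivariant setting, this null-homotopy permits excising a neighborhood of the $H$-fixed set and factoring the essential part of the map through the free quotient, where the residual free $G/H$-action contributes one further Bott class to the $K$-theoretic count; tracking this upgrades the estimate by $\tfrac12$, giving
\[
\kappa_R(K) - \tfrac{1}{16}\sigma(\Sigma_2(S)) + \tfrac12 \leq \kappa_R(K') + b^+(\Sigma_2(S)) - b^+(X).
\]

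I expect the main obstacle to be this last step: making the Furuta-type excision/relative-Euler-class argument precise in the doubled $\mathbb{Z}_4$-equivariant $K$-theory of \cite{KMT:2021}, in particular choosing the null-homotopy on the $H$-fixed set compatibly with the doubling construction and checking that it buys exactly one factor of $\mathbb{C}_+$, i.e.\ exactly $\tfrac12$ rather than more. The accompanying bookkeeping of the correction terms $n(\Sigma_2(K),\cdot)$, $n(\Sigma_2(K'),\cdot)$ and the APS-index shifts hidden inside $BF^R_{\widetilde X}$ is routine but must be tracked carefully to land the stated constants.
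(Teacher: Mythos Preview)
Your setup through the first inequality is correct and matches the paper: the real spin structure on $\Sigma_2(S)$, the real Bauer--Furuta map, the doubling, and the appeal to \cite{KMT:2021} for the basic bound are exactly what the paper does.

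Your argument for the $+\tfrac12$ refinement, however, contains a factual error and then diverges from the paper's route. You assert that ``both $\widetilde{\mathbb{R}}$ and $\mathbb{C}_+$ have trivial $H$-part,'' but this is false for $\widetilde{\mathbb{R}}$: the $G$-action on $\widetilde{\mathbb{R}}$ factors through $G/H \cong \mathbb{Z}_2$, so $H=\{\pm 1\}$ acts \emph{trivially} and the $H$-fixed set of $(\widetilde{\mathbb{R}}^t)^+$ is the whole sphere $(\widetilde{\mathbb{R}}^t)^+$, not $S^0$. Thus on $H$-fixed sets the map goes from a $t$-sphere to a $t'$-sphere with $t<t'$; it is indeed null-homotopic for dimension reasons, but not for the reason you give, and the source is not $S^0$. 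More importantly, the vague ``excise a neighborhood of the $H$-fixed set and factor through the free quotient'' step is not how the extra $\tfrac12$ is obtained, and you yourself flag this as the main obstacle.

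The paper's argument is purely algebraic in $R(G)=\mathbb{Z}[w,z]/(w^2-2w,\,w-2z+z^2)$ and does not use any excision. It isolates a proposition (\cref{+1keylem}): if $f\colon X\to X'$ is a $G$-map of spaces of type $G$-SWF at levels $t<t'$, with $X$ a representation sphere $(\mathbb{C}_+^s)^+\wedge(\widetilde{\mathbb{R}}^t)^+$, then $k(DX)+t+\tfrac12\le k(DX')+t'$. The proof runs the commutative square comparing $\widetilde K_G(DX')\to\widetilde K_G(DX)$ with the restrictions to $H$-fixed sets. Two inputs are used: (i) from \cite{KMT:2021}, $(Df^H)^*$ is multiplication by $2^{t'-t-1}w$; (ii) the strongly spherical hypothesis makes $i^*$ \emph{exactly} multiplication by $(z\bar z)^s$ with $s=k(DX)$ (not merely a generator of an ideal). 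Chasing the square gives $2^{t'-t-1+k(DX')}w=(z\bar z)^s\, h$ for some $h\in R(G)$. Since the left side lies in $\ker\big(R(G)\to R(H)\big)$, one deduces $h=g(z-2)+wg'$; then computing $h\bar h$ and using $(z-2)(\bar z-2)=z\bar z+2q'$ produces an extra factor of $2$, which yields precisely $+\tfrac12$ after unwinding. Applying this to the doubled Bauer--Furuta map, with $t'-t=b^+(\Sigma_2(S))-b^+(X)$, gives the stated inequality. I recommend replacing your excision sketch with this algebraic computation.
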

The first inequality in \cref{thm:KGsplit} is already follows from \cite[Theorem 1.3]{KMT:2021}. In this section, we shall prove the second inequality. 

\begin{rem}
For the original relative $10/8$-type inequality, Manolescu \cite{manolescu2014intersection} established a $+1$-version under the $K_{{\rm Pin(2)}}$-splitting condition. In this setting, one has
\[
\widetilde{K}_{{\rm Pin(2)}}(S^0) \cong \mathbb{Z}[w, z]/(w^2 - 2w,\, wz = 2w).
\]
The $K_{{\rm Pin(2)}}$-splitting condition is defined as follows: an arbitrary element of the ideal
\[
\operatorname{Im} \left(\widetilde{K}_{{\rm Pin(2)}}(SWF(Y)) \to \widetilde{K}_{{\rm Pin(2)}}(SWF(Y)^{S^1}) \cong \mathbb{Z}[w, z]/(w^2 - 2w,\, wz = 2w) \right)
\]
must be of the form
\[
z^k(\lambda w + P(z)) = \lambda 2^k w + z^k P(z),
\]
where $\lambda \in \mathbb{Z}$ and $P(z)$ is a polynomial in $z$.

In the setting of the real $10/8$-type inequality, we replace ${\rm Pin}(2)$ with $G = \langle j \rangle$.  
Accordingly, the representation ring $R({\rm Pin}(2))$ is replaced by
\[
R(G) = \mathbb{Z}[w, z]/(w^2 - 2w,\, w - 2z + z^2).
\]
When $K$ and $K'$ are the unknots, the corresponding statement in the real setting is proved in \cite{konno2022dehn}.
\end{rem}

Before proceeding to the proof, we briefly recall the setting of the real $10/8$-type inequality.  
A pointed finite $G$-CW complex $X$ is called a \emph{space of type $G$-SWF} if:
\begin{itemize}
    \item The fixed-point set $X^H$ is $G$-homotopy equivalent to $(\tilde{\mathbb{R}}^s)^+$ for some $s \geq 0$.
    \item The $G$-action on $X \setminus X^H$ is free.
\end{itemize}
The natural number $s$ is called the \emph{level} of $X$. For a space of type $G$-SWF, define
\[
k(X) := \min \left\{ k \geq 0 \mid \exists x \in \mathfrak{J}(X) \text{ such that } wx = 2^k w \right\},
\]
where $\mathfrak{J}(X)$ is an ideal of $R(G)$ defined by the property that the image of 
\[
\tau^\ast \colon \widetilde{K}_G(X) \to \widetilde{K}_G(X^H)
\]
is equal to $\mathfrak{J}(X) \cdot b_{t\tilde{\mathbb{C}}}$, where $b_{t\tilde{\mathbb{C}}}$ denotes the Bott element (see \cite[Section 2.1]{manolescu2014intersection}).

The following doubling construction is used to define the real $\kappa$-invariant. Define a group automorphism $\alpha \colon G \to G$ by $\alpha(j) = -j$.
\begin{defn}\label{double}
Let $X$ be a space of type $G$-SWF at level $t$.  
Define $X^\dagger$ to be the space of type $G$-SWF at level $t$ given by the same underlying topological space as $X$, but with the $G$-action twisted by $\alpha$, that is, the action is given by composing the original $G$-action with $\alpha$. Then the smash product $X \wedge X^\dagger$ is also a space of type $G$-SWF, now at level $2t$.  
We define the \emph{double} of $X$ to be the space of type $G$-SWF:
\[
D(X) := X \wedge X^\dagger.
\]

Similarly, for a real or complex representation $V$ of $G$, define the twisted representation $V^\dagger$ to be the same underlying vector space as $V$, but with $G$-action given by composing the original action with $\alpha$.  
Define the \emph{double} of $V$ by
\[
D(V) := V \oplus V^\dagger.
\]
\end{defn}

In order to prove the latter statement in \cref{thm:KGsplit}, we establish the following:

\begin{prop}\label{+1keylem}
Let $X$ and $X'$ be spaces of type $G$-SWF at levels $t$ and $t'$, respectively, with $t < t'$.  
Suppose $X$ is stably homotopy equivalent to $(\C_+^m)^+ \wedge (\R^t)^+ $ and there exists a pointed $G$-equivariant map $f \colon X \to X'$ whose restriction to the $G$-fixed-point sets is a homotopy equivalence.  
Then we have
\[
k(DX) + t + \frac{1}{2} \leq k(DX') + t'.
\]
\end{prop}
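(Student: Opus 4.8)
The plan is to reduce the statement to the ideal containment $\mathfrak{J}(DX') \subseteq \mathfrak{J}(DX)$ inside $R(G)$, after which the inequality drops out of the monotonicity of $k(\cdot)$ together with the hypothesis $t<t'$. I would begin by setting $d := t'-t \geq 1$ and replacing $X$ by $\Sigma^{d\widetilde{\mathbb{R}}}X$, so that $X$ and $X'$ now sit at the same level $t'$ and the hypothesis supplies a $G$-map $f\colon \Sigma^{d\widetilde{\mathbb{R}}}X \to X'$ restricting to a $G$-homotopy equivalence $(\widetilde{\mathbb{R}}^{t'})^+ \to (X')^H$ on $H$-fixed point sets. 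Because $X \simeq_G (\mathbb{C}_+^m)^+ \wedge (\widetilde{\mathbb{R}}^t)^+$ and $\mathbb{C}_+^{\dagger}\cong \mathbb{C}_-$, $\widetilde{\mathbb{R}}^{\dagger}\cong\widetilde{\mathbb{R}}$, the doubles
\[
DX \simeq_G (\mathbb{C}_+^m \oplus \mathbb{C}_-^m \oplus \widetilde{\mathbb{R}}^{2t})^+, \qquad \Sigma^{2d\widetilde{\mathbb{R}}}DX = D(\Sigma^{d\widetilde{\mathbb{R}}}X) \simeq_G (\mathbb{C}_+^m \oplus \mathbb{C}_-^m \oplus \widetilde{\mathbb{R}}^{2t'})^+
\]
are honest representation spheres, and, using $\widetilde{\mathbb{R}}^2\cong\widetilde{\mathbb{C}}$, their $H$-fixed point sets are the complex representation spheres $(\widetilde{\mathbb{C}}^{t})^+$ and $(\widetilde{\mathbb{C}}^{t'})^+$; likewise $(DX')^H\simeq_G(\widetilde{\mathbb{C}}^{t'})^+$ since $X'$ has level $t'$. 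Hence $\widetilde{K}_G$ of each of these spheres is free of rank one over $R(G)$ on its Thom (Bott) class, and the standard Euler-class computation identifies the restriction map $\tau^*$ to the $H$-fixed set of each of $DX$, $\Sigma^{2d\widetilde{\mathbb{R}}}DX$ with multiplication by $z^m\bar{z}^m$, where $z=1-\mathbb{C}_+$ and $\bar z = 1-\mathbb{C}_-$ are the $K$-theoretic Euler classes. Therefore
\[
\mathfrak{J}(DX) = \mathfrak{J}(\Sigma^{2d\widetilde{\mathbb{R}}}DX) = (z^m\bar{z}^m) \subseteq R(G),
\]
a principal ideal; suspension by $\widetilde{\mathbb{R}}$'s leaves $\mathfrak{J}$ unchanged because $\widetilde{\mathbb{R}}^2=\widetilde{\mathbb{C}}$ is $H$-fixed and so contributes only a Bott class, not an Euler class, on the $H$-fixed set.

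Next I would run the comparison. The smash $Df := f\wedge f^{\dagger}\colon \Sigma^{2d\widetilde{\mathbb{R}}}DX \to DX'$ is again a $G$-homotopy equivalence on $H$-fixed point sets, so naturality of restriction to $H$-fixed points gives a commutative square
\[
\begin{CD}
\widetilde{K}_G(DX') @>{\tau^*}>> \widetilde{K}_G((DX')^H) \\
@V{(Df)^*}VV @VV{\cong}V \\
\widetilde{K}_G(\Sigma^{2d\widetilde{\mathbb{R}}}DX) @>{\tau^*}>> \widetilde{K}_G((\Sigma^{2d\widetilde{\mathbb{R}}}DX)^H)
\end{CD}
\]
whose right vertical arrow is an isomorphism, being induced by a $G$-homotopy equivalence of spheres; concretely it is multiplication by a unit of $R(G)=\mathbb{Z}[\mathbb{Z}_4]$ (i.e.\ $\pm$ a group element). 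By definition the image of the top $\tau^*$ is $\mathfrak{J}(DX')$ times a Bott class; since $\Sigma^{2d\widetilde{\mathbb{R}}}DX$ is a representation sphere, $(Df)^*\bigl(\widetilde{K}_G(DX')\bigr)$ is a submodule of the free rank-one module $\widetilde{K}_G(\Sigma^{2d\widetilde{\mathbb{R}}}DX)$, hence equals $J$ times the Thom class for some ideal $J\subseteq R(G)$, while the bottom $\tau^*$ is multiplication by $z^m\bar z^m$. Chasing the square and absorbing the unit, $\mathfrak{J}(DX') = J\cdot(z^m\bar z^m)$; in particular $\mathfrak{J}(DX') \subseteq (z^m\bar z^m) = \mathfrak{J}(DX)$.

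Finally I would assemble the pieces. From $\mathfrak{J}(DX')\subseteq\mathfrak{J}(DX)$, any $x\in\mathfrak{J}(DX')$ with $wx=2^{k(DX')}w$ also lies in $\mathfrak{J}(DX)$, whence $k(DX')\geq k(DX)$; finiteness of $k(DX)$ — in fact $k(DX)\leq m$ — follows from the relation $wz\bar z = 2w$ in $R(G)$ (a one-line check in $\mathbb{Z}[t]/(t^4-1)$ with $w=1-t^2$, $z=1-t$, $\bar z=1-t^3$), which gives $w(z\bar z)^m = 2^m w$ with $(z\bar z)^m\in\mathfrak{J}(DX)$. Combining with $t'\geq t+1$ yields $k(DX')+t' \geq k(DX)+t+1 \geq k(DX)+t+\frac{1}{2}$, as desired. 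The step I expect to be the main obstacle is the reduction in the first paragraph: turning the hypothesis that $f$ is a homotopy equivalence on fixed-point sets into an honest $G$-homotopy equivalence of the matching spheres $(\widetilde{\mathbb{R}}^{t'})^+$ (which is exactly where the suspension by $d\widetilde{\mathbb{R}}$ and the assumption $t<t'$ are used), and carefully handling the Thom/Bott identifications for the non-complex representation $\widetilde{\mathbb{R}}$ — in particular checking that $\mathfrak{J}$ is unaffected by $\Sigma^{\widetilde{\mathbb{R}}^2}$ and that the Euler classes of $\mathbb{C}_+$ and $\mathbb{C}_-$ multiply to the clean relation $wz\bar z = 2w$. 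The remaining diagram-chase in $R(G)$ is purely formal.
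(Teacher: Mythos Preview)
The gap you flag at the end is fatal, not merely the main obstacle. The hypothesis gives an honest pointed $G$-map $f\colon X\to X'$, and there is no mechanism to manufacture from this a $G$-map $\Sigma^{d\widetilde{\mathbb{R}}}X\to X'$: suspension produces $\Sigma^{d\widetilde{\mathbb{R}}}X\to\Sigma^{d\widetilde{\mathbb{R}}}X'$, and one cannot desuspend the target. Consequently the right vertical arrow in your square is $(Df^H)^*\colon \widetilde K_G((DX')^H)\to\widetilde K_G((DX)^H)$ for the \emph{actual} double $Df\colon DX\to DX'$; the spheres involved are $(\widetilde{\mathbb{C}}^{t'})^+$ and $(\widetilde{\mathbb{C}}^{t})^+$, of different dimensions, so this is certainly not induced by a homotopy equivalence. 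The paper computes this map explicitly (citing \cite{KMT:2021}): it is multiplication by $2^{\,t'-t-1}w$, which is far from a unit in $R(G)$. Your ideal containment $\mathfrak{J}(DX')\subseteq\mathfrak{J}(DX)$ therefore does not follow from the diagram as written. There is also a conflation of fixed-point sets: the hypothesis says $f^G\colon S^0\to S^0$ is an equivalence (both $X^G$ and $(X')^G$ are $S^0$), not that $f^H$ is one --- indeed $f^H\colon(\widetilde{\mathbb{R}}^t)^+\to(\widetilde{\mathbb{R}}^{t'})^+$ cannot be a homotopy equivalence when $t<t'$.

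The paper's route is genuinely different from your diagram chase. Working with the actual $Df$ and the non-unit $2^{\,t'-t-1}w$, one obtains $2^{\,t'-t-1+k(DX')}w=(z\bar z)^m h$ for some $h\in R(G)$; passing to $R(H)$ forces $h=g(z-2)+wg'$, and then multiplying the relation by its ``conjugate'' $\bar h$ (this is exactly where the doubling is exploited) extracts an extra factor of $2$ from $(z-2)(\bar z-2)$. That algebraic squaring step, not an ideal containment, is the source of the ``$+\tfrac12$''. Your final integrality remark is correct --- since $k$, $t$, $t'$ are integers, the stated inequality is equivalent to the $+1$ version --- but the argument you propose to reach it does not go through.
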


\begin{proof}
The map $f$ induces a map 
\[
Df \colon DX \to DX'. 
\]
We begin with the following commutative diagram:
\begin{align}
\label{diagram k}
  \begin{CD}
     \widetilde{K}_G(DX')   @>{Df^\ast}>> \widetilde{K}_G(DX) \\
  @V{(Di')^*}VV    @V{Di^*}VV \\
\widetilde{K}_G((DX')^H)   @>{(Df^{H})^\ast}>> \widetilde{K}_G(DX^H) 
  \end{CD}
\end{align}
Since $\widetilde{K}_G((DX')^H)$ and $\widetilde{K}_G(DX^H)$ are free $R(G)$-modules of rank $1$, we may view the maps in the diagram as multiplications by elements of $R(G)$. The vertical maps are given by multiplication with the $K$-theoretic Euler classes $w^{t'}$ and $w^{t}$, respectively.  
It is shown in \cite{KMT:2021} that $(Df^H)^\ast$ in diagram \eqref{diagram k} corresponds to multiplication by $2^{t'-t-1}w$ when we suppose $t'-t>0$.

Now take $x \in \mathfrak{J}(DX')$ such that $w x = 2^{k(X')} w$.  
By the commutativity of \eqref{diagram k}, we have
\begin{align*}
2^{t'-t-1 + k(X')}w = 2^{t'-t-1}w \cdot x = (Df^H)^* x = i^* Df^* (y),
\end{align*}
where $y$ is an element satisfying $(i')^* y = x$.   On the other hand, we are assuming 
\[
DX = (\tilde{\mathbb{C}}^t)^+ \wedge (\mathbb{C}_+^s \oplus \mathbb{C}_-^s)^+
\]
for some $s \geq 0$. In this case, we have $k(X) = s$.  
Under this identification, the map $i^*$ corresponds to multiplication by $(z\overline{z})^s$.  
Hence,
\begin{align}\label{oo}
2^{t'-t-1 + k(X')}w = (z\overline{z})^s \cdot Df^*(y) = (z\overline{z})^s \cdot h,
\end{align}
where we set $h := Df^*(y)$.

Since $t' - t > 0$, the left-hand side of \eqref{oo} lies in the kernel of the forgetting map
\[
R(G) = \mathbb{Z}[w, z]/(w^2 - 2w,\, w - 2z + z^2) \longrightarrow R(H) = \mathbb{Z}[z]/(-2z + z^2).
\]
Therefore, $(z\overline{z})^s h$ also lies in the kernel. In particular, $h$ has the form
\[
h(w, z) = g(z - 2) + w g'
\]
for some $g, g' \in \mathbb{Z}[w, z]/(w^2 - 2w,\, w - 2z + z^2)$. We now consider the doubling of \eqref{oo}:
\begin{align}\label{ooo}
2^{2t' - 2t + 2k(DX') - 1}w = (z\overline{z})^{2s} \cdot h \cdot \overline{h}.
\end{align}
For a polynomial $P \in \mathbb{Z}[w, z]/(w^2 - 2w,\, w - 2z + z^2)$, define
\[
\overline{P}(w, z) := P(w, \overline{z}),
\quad \text{where} \quad \overline{z} = z + w - w z.
\]
We compute:
\begin{align*}
h \cdot \overline{h} = w q + g \cdot \overline{g} (z - 2)(\overline{z} - 2) = w q + g \cdot \overline{g} (z \overline{z} + 2 q')
\end{align*}
for some polynomials $q$ and $q'$ in $\mathbb{Z}[w, z]/(w^2 - 2w,\, w - 2z + z^2)$. Plugging this into \eqref{ooo} and multiplying both sides by $w$, we get:
\begin{align*}
2^{2t' - 2t + 2k(DX')}w &= w \cdot (z\overline{z})^{2s} \cdot \left(w q + g \cdot \overline{g}(z \overline{z} + 2 q')\right) \\
&= 2^{2s} w \cdot \left(w q + g \cdot \overline{g} \cdot z \overline{z} + 2 q'\right) \\
&= 2^{2k(DX) + 1} \cdot \left(q + g \cdot \overline{g} + q'\right).
\end{align*}
Thus we obtain the inequality
\[
k(DX') + t' - t \geq k(DX) + \tfrac{1}{2}.
\]
This completes the proof.
\end{proof}

\begin{proof}[Proof of \cref{thm:KGsplit}]
The first inequality follows from \cite{KMT:2021}. We now consider the second inequality. Recall that the definition of the \emph{$K$-theoretic Fr{\o}yshov invariant} is given by
\[
\kappa_R(K) := k\left(\Sigma^{-D(V^0_\lambda)}
\Sigma^{-D(W^0_\lambda)}
\Sigma^{-D((W^\dagger)^0_\lambda)} D\left(I^\mu_\lambda(\Sigma_2(K), \mathfrak{t}, \iota, g)\right)\right) - \frac{n(Y, \mathfrak{t}, g)}{2},
\]
where the notations follow \cite{KMT:2021}.

Now, set $\Sigma_2(K) = Y_0$, $\Sigma_2(K') = Y_1$, and $\Sigma_2(S) = W$.  
From the assumptions, $W$ admits a real spin structure.  
Thus, we obtain a $G$-equivariant map of the form:
\[
f \colon \Sigma^{m_0 \tilde{\mathbb{R}}} \Sigma^{n_0^+ \mathbb{C}_+} \Sigma^{n_0^- \mathbb{C}_+} I_{-\mu}^{-\lambda}(Y_0) \to \Sigma^{m_1 \tilde{\mathbb{R}}} \Sigma^{n_1^+ \mathbb{C}_+} \Sigma^{n_1^- \mathbb{C}_+} I^\mu_\lambda(Y_1),
\]
where $I^\mu_\lambda(Y_i) := I^\mu_\lambda(Y_i, \mathfrak{t}_i, \iota_i, g_i)$, and $m_i, n_i^\pm \geq 0$, with $-\lambda, \mu$ taken sufficiently large. Taking the double of $f$, we obtain the \emph{doubled cobordism map}, or the \emph{doubled relative Bauer--Furuta invariant},
\begin{align}
\label{eq: doubled cob map}
D(f) \colon 
\Sigma^{m_0 \tilde{\mathbb{C}}} \Sigma^{n_0 (\mathbb{C}_+ \oplus \mathbb{C}_-)} D\left(I_{-\mu}^{-\lambda}(Y_0)\right) \to 
\Sigma^{m_1 \tilde{\mathbb{C}}} \Sigma^{n_1 (\mathbb{C}_+ \oplus \mathbb{C}_-)} D\left(I^\mu_\lambda(Y_1)\right),
\end{align}
where $n_i := n_i^+ + n_i^-$. Denote by $V_i(\tilde{\mathbb{R}})_\lambda^\mu$ the vector space $V(\tilde{\mathbb{R}})_\lambda^\mu$, which is a finite-dimensional approximation of the $I$-invariant part for $Y_i$.  We use similar notation for other representations as well.

We have
\begin{align*}
m_0-m_1=&\dim_\R (V_1(\tilde{\R})^0_{\lambda})-\dim_\R(V_0(\tilde{\R})^0_{-\mu}) - b^+(W) + b^+_\iota(W), \\
\begin{split}
n_0-n_1=&\dim_\C(V_1(\C_+)^0_{\lambda}) +\dim_\C(V_1(\C_-)^0_{\lambda})\\
&-\dim_\C(V_0(\C_+)^0_{-\mu})-\dim_\C(V_0(\C_-)^0_{-\mu})\\
&- \frac{\sigma(W)}{16}+ \frac{n(Y_1, \mathfrak{t}_1, g_1)}{2}-\frac{n(Y_0, \mathfrak{t}_0, g_0)}{2}.
\end{split}
\end{align*}
We apply \cref{+1keylem} to \eqref{eq: doubled cob map}. 
Note that $t'-t$ corresponds to $b^+(W)- b^+_\iota(W)$. This  completes the proof. \end{proof}

\subsection{Proof of main theorems}

We now proceed to prove \cref{thm:topologicalmain} and \cref{thm:topologicalmaingeneral}, which are implied by the following theorem:

\begin{thm}
Let $P$ be a pattern with odd winding number, and let $m$ be a positive integer.  
Then any finite self-connected sum of $P(mE_{2,1}) \mathbin{\#} -P(U)$ does not bound a normally immersed disk in $B^4$ with only negative double points.
\end{thm}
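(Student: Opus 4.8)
The plan is to combine three ingredients already available: the satellite formula \cref{ex}, the fact that $\kappa_R$ depends only on the local equivalence class in $\mathcal{LE}$, and the real $10/8$--type inequality \cref{thm:KGsplit}. Write $J_n := n\bigl(P(mE_{2,1}) \# -P(U)\bigr)$ for the $n$--fold self--connected sum, $n\ge 1$. Since ``bounding a normally immersed disk in $B^4$ with only negative double points'' is preserved under mirror--reverse (the mirror flips the sign of a double point, the reverse leaves it unchanged), $J_n$ bounds such a disk if and only if $-J_n$ bounds a normally immersed disk in $B^4$ with only \emph{positive} double points. So it suffices to show that the latter forces $\kappa_R(-J_n)\le 0$, and to rule this out.

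\textit{Step 1 (reduction via local equivalence).} Because $[K]\mapsto [SWF_R(K)]_{\text{loc}}$ is a homomorphism $\mathcal C\to\mathcal{LE}$ and, by \cref{ex}, $[SWF_R(P(mE_{2,1}))]_{\text{loc}}=[SWF_R(mE_{2,1})]_{\text{loc}}+[SWF_R(P(U))]_{\text{loc}}$ in $\mathcal{LE}$, we get $[SWF_R(P(mE_{2,1})\# -P(U))]_{\text{loc}}=[SWF_R(mE_{2,1})]_{\text{loc}}$, hence
\[
[SWF_R(-J_n)]_{\text{loc}} \;=\; -nm\,[SWF_R(E_{2,1})]_{\text{loc}} \;=\; [SWF_R(-nmE_{2,1})]_{\text{loc}}.
\]
As $\kappa_R$ is a local equivalence invariant, $\kappa_R(-J_n)=\kappa_R(-nmE_{2,1})$. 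Combining this with the lower bound $\kappa_R(-\ell E_{2,1})\ge \tfrac12$ for all $\ell\ge 1$ (established in this section from the $+\tfrac12$ part of \cref{thm:KGsplit}, using that $SWF_R(E_{2,1})$ is strongly spherical) yields $\kappa_R(-J_n)\ge\tfrac12$.

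\textit{Step 2 (the obstruction).} It remains to prove: if a knot $L$ bounds a normally immersed disk $D$ in $B^4$ with only positive double points, say $p$ of them, then $\kappa_R(L)\le 0$. First blow up $B^4$ at each double point, using one $\overline{\mathbb{CP}}^2$ per point; the proper transform of $D$ is a smoothly embedded disk $\widehat\Delta$ in $X_0:=B^4\#\,p\,\overline{\mathbb{CP}}^2$ with $\partial\widehat\Delta=L$ and $[\widehat\Delta]=2\xi$, where $\xi=\sum_i \pm e_i$ satisfies $\xi^2=-p$ and $\xi\equiv\sum_i e_i = w_2(X_0)\pmod 2$. Removing a small ball from $X_0$ meeting $\widehat\Delta$ turns $X_0$ into a cobordism $X$ from $S^3$ to $S^3$ with $H_1(X)=0$, $b^+(X)=0$, $b^-(X)=p$, and $\widehat\Delta$ into an embedded annulus cobordism $S$ from the unknot $U$ to $L$, still with $[S]=2\xi$ and $[S]/2\equiv w_2(X)$, so the hypotheses of \cref{thm:KGsplit} are met. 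A computation with the double branched cover gives $\sigma(\Sigma_2(S))=2\sigma(X)-\tfrac12[S]^2=0$ and $b^+(\Sigma_2(S))=b^-(\Sigma_2(S))=p$. Applying \cref{thm:KGsplit} to the \emph{reversed} cobordism $\overline S$ from $L$ to $U$ inside $-X$ (so $b^+(-X)=b^-(X)=p$ and $b^+(\Sigma_2(\overline S))=b^-(\Sigma_2(S))=p$, $\sigma(\Sigma_2(\overline S))=0$) gives
\[
\kappa_R(L)-\tfrac1{16}\cdot 0 \;\le\; \kappa_R(U)+p-p \;=\; 0 .
\]
Taking $L=-J_n$ and combining with Step 1 produces the contradiction $\tfrac12\le\kappa_R(-J_n)\le 0$; hence no $J_n$ bounds a normally immersed disk in $B^4$ with only negative double points. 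In particular this proves \cref{thm:topologicalmain} (with $P(U)$ slice, so $-P(U)$ drops out) and \cref{thm:topologicalmaingeneral}.

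\textit{Expected main obstacle.} The delicate point is the homological input in Step 2: verifying carefully that the double branched cover $\Sigma_2(S)$ of the blown--up disk has $\sigma(\Sigma_2(S))=0$ and $b^+(\Sigma_2(S))=p$ (equivalently, that the covering involution contributes no positive classes beyond the $p$ forced by the exceptional spheres, and none of the $S^2\times S^2$--type summands created at the double points are ``wasted''), and checking the characteristic condition $[S]/2\equiv w_2(X)$ together with the matching of the natural real spin structures so that \cref{thm:KGsplit} genuinely applies. One must also confirm that the reduction in Step 1 is compatible with the doubled construction defining $\kappa_R$, i.e.\ that equality of local equivalence classes really forces equality of $\kappa_R$ on the relevant self--connected sums; this is exactly the principle already used for \cref{thm:topologicalmaingeneral}, so it should be routine to invoke.
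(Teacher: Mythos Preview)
Your proof is correct and follows the same strategy as the paper: reduce $\kappa_R(-J_n)$ to $\kappa_R(-nmE_{2,1})\ge\tfrac12$ via the satellite formula and local equivalence, then show that a negatively-immersed slice disk forces $\kappa_R\le 0$ by producing an embedded concordance in a punctured connected sum of $\mathbb{CP}^2$'s and applying the first inequality of \cref{thm:KGsplit}. Two small corrections: in your Step~1 parenthetical, the strongly spherical hypothesis required for the $+\tfrac12$ refinement of \cref{thm:KGsplit} pertains to the \emph{source} knot of the cobordism used to establish the lower bound, which is $mT_{2,19}$ (see \cite[Proposition~3.53]{KMT:2021}), not $E_{2,1}$; and in Step~2 your formula $\sigma(\Sigma_2(S))=2\sigma(X)-\tfrac12[S]^2$ silently drops the boundary contribution $\sigma(L)$, which does vanish here (Litherland's formula with odd winding number together with $\sigma(E_{2,1})=0$ gives $\sigma(J_n)=0$), but this should be made explicit.
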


\begin{proof}
We first prove that for any positive integer $m$,  
\begin{equation}\label{eq:kappa21}
    \frac{1}{2} \leq \kappa_R( -m E_{2,1} ),
\end{equation}
by applying \Cref{thm:KGsplit}. There exists an oriented, compact, connected, properly, and smoothly embedded cobordism $S_m$ in the twice-punctured $2m\mathbb{CP}^2$, denoted by $X$, from $mT_{2, 19}$ to $-m E_{2,1}$~\cite{ACMPS:2023}. Moreover, $S_m$ represents the homology class
\[
(\underbrace{2, \dots, 2}_{m},\, \underbrace{6, \dots, 6}_{m}) \in H_2(X, \partial X) \cong \mathbb{Z}^{2m}.
\] The Mayer–Vietoris sequence and the $G$-signature theorem (see, e.g., \cite[Lemma 4.5]{KMT:2024}) yield
\[
\begin{split}
    b^+(\Sigma_2(S_m)) - b^+(X) 
    &= b^+(X) -\frac{1}{4} [S_m]^2 - \frac{1}{2} \sigma (mT_{2,19})\\
    &= 2m - \frac{1}{4} \left( (2)^2m + (6)^2m \right) + \frac{1}{2} (18m) \\
    &= m.
\end{split}
\]
Similarly, we compute  
\[
\begin{split}
    \sigma(\Sigma_2(S_m))
    &= 2\sigma(X) -\frac{1}{2} [S_m]^2 - \sigma(mT_{2,19})\\
    &= 4m - \frac{1}{2} \left( (2)^2m + (6)^2m \right) + 18m \\
    &= 2m.
\end{split}
\]
Additionally, by \cite[Theorem 1.6 and Theorem 1.8]{KMT:2021}, we have  
\[
\kappa_R(m T_{2, 19}) = \frac{9}{8} m. 
\]
Finally, since the real Floer homotopy type of \( mT_{2,19} \) is strongly spherical~\cite[Proposition 3.53]{KMT:2021}, we apply the latter inequality of \Cref{thm:KGsplit} to obtain  

\[
m+\frac{1}{2} =\frac{9}{8} m - \frac{1}{8}m + \frac{1}{2}  \leq \kappa_R(-m E_{2,1}) + m,
\]
which proves the claim.

Next, we prove that this implies that $m E_{2,1}$ does not bound a normally immersed disk in $B^4$ with only negative double points. Suppose, for contradiction, that it does, with $\ell$ negative double points. Then, there exists a smooth concordance $S'$ in the twice-punctured $\ell\mathbb{CP}^2$, denoted by $X'$, from $-m E_{2,1}$ to the unknot. Moreover, $S'$ represents the homology class  
\[
[S'] = (2,2, \ldots, 2) \in H_2(X', \partial X').
\]
Applying the first inequality of \Cref{thm:KGsplit} in a similar manner as above, we conclude that  
\[
\kappa_R(-m E_{2,1}) \leq 0,
\]
which is a contradiction.

Finally, \Cref{thm:mainhomotopytype} implies that the connected sum of any finite number of copies of $P(mE_{2,1}) \mathbin{\#} -P(U)$ lies in the same local equivalence class as the connected sum of the same number of copies of $mE_{2,1}$. We denote such a connected sum by $J$. In particular, by \Cref{cor:mainhomotopytype} together with \eqref{eq:kappa21}, we conclude that $-J$ has $\kappa_R$-invariant at least $1/2$. Hence, the same argument as above applies to $J$, completing the proof.
\end{proof}

\begin{rem}
Since we have constructed a geometric real spin cobordism
\[
X^0 \colon \Sigma_2(P(mE_{2,1})) \to \Sigma_2(mE_{2,1}) \sqcup \Sigma_2(P(U))
\]
satisfying
\begin{align*}
b^2(X^0) - b^2_\tau(X^0) &= 0, \\
\sigma(X^0) &= 0,
\end{align*}
we can slightly refine the non-sliceness result for $P(mE_{2,1}) \# -P(U)$, for any pattern $P$ with odd winding number, as follows:

If $m = 3$, then
\[
\mathrm{sn}(P(mE_{2,1}) \# -P(U)) \geq 2,
\]
and if $m = 6$, then
\[
\mathrm{sn}(P(mE_{2,1}) \# -P(U)) \geq 3.
\]
The proof is similar to the argument in \cite{FT25}, combined with the cobordism $X^0$.
\end{rem}


Finally, we prove \cref{thm:stabnumber}, whose statement we recall below.

\begin{thm}\label{thm:stabnumberbody} Suppose that $K$ is a finite self-connected sum of $T_{3,11}$. Then, for any pattern $P$ with an odd winding number such that $P(U)$ is the unknot, we have
\[
\lim_{n \to \infty} \left( \mathrm{sn}(n P(K)) - \mathrm{sn}^{\mathrm{Top}}(n P(K)) \right) = \infty.
\]
\end{thm}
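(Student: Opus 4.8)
The plan is to reduce \Cref{thm:stabnumberbody} to \cite[Theorem 1.11]{KMT:2021} by showing that $nP(K)$ has the same real Seiberg--Witten invariants as $nK$, and the same classical invariants as $nK$ up to a reparametrization that does not affect the estimates used there. First I would observe that, since $P(U) = U$, its double branched cover is $S^3$, so $SWF_R(P(U)) \simeq S^0$, and \Cref{thm:mainhomotopytype} gives a $G$-equivariant stable homotopy equivalence $SWF_R(P(K)) \simeq SWF_R(K)$. Because $[J] \mapsto [SWF_R(J)]_{\mathrm{loc}}$ is a homomorphism $\mathcal{C} \to \mathcal{LE}$, this yields $[SWF_R(nP(K))]_{\mathrm{loc}} = n[SWF_R(P(K))]_{\mathrm{loc}} = n[SWF_R(K)]_{\mathrm{loc}} = [SWF_R(nK)]_{\mathrm{loc}}$ for every $n$, so every real Seiberg--Witten concordance invariant factoring through $\mathcal{LE}$ — in particular $\kappa_R$, $\delta_R$, $\overline{\delta}_R$, $\underline{\delta}_R$ — agrees on $nP(K)$ and on $nK$.

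Next I would record the behaviour of the classical invariants. Since $P$ has odd winding number $w$ and $P(U) = U$, the Levine--Tristram signature function satisfies $\sigma_\omega(P(K)) = \sigma_{\omega^w}(K)$, so the signature function of $nP(K)$ takes the same set of values as that of $nK$; in particular $\sigma(nP(K)) = \sigma(nK)$ and $\max_\omega |\sigma_\omega(nP(K))| = \max_\omega |\sigma_\omega(nK)|$, and the Blanchfield pairing of $nP(K)$ is the pull-back of that of $nK$ along $t \mapsto t^w$. Likewise $\mathrm{Arf}(P(K)) = \mathrm{Arf}(K) + \mathrm{Arf}(P(U)) = 0$, so $\mathrm{Arf}(nP(K)) = 0$ and both $\mathrm{sn}(nP(K))$ and $\mathrm{sn}^{\mathrm{Top}}(nP(K))$ are defined.

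With these two facts in hand I would run the argument of \cite[Theorem 1.11]{KMT:2021} for $nP(K)$. For the smooth lower bound: if $nP(K)$ bounded a smooth null-homologous disk in $B^4 \mathbin{\#} k(S^2 \times S^2)$ with $k = \mathrm{sn}(nP(K))$, then, removing a small ball meeting the disk in a sub-disk, one obtains a connected, properly embedded, null-homologous cobordism $S$ from $nP(K)$ to the unknot inside the twice-punctured spin manifold $X = B^4 \mathbin{\#} k(S^2 \times S^2)$, so $[S]/2 = 0$ reduces to $w_2(X)$ and \Cref{thm:KGsplit} applies; the $G$-signature and Mayer--Vietoris computations of \Cref{Proof of the applications} (with $[S]^2 = 0$ and $\sigma(X) = 0$) express the resulting lower bound on $k$ purely in terms of $\kappa_R(nP(K))$ and $\sigma(nP(K))$, which by the previous two steps equal $\kappa_R(nK)$ and $\sigma(nK)$ — so $\mathrm{sn}(nP(K))$ satisfies the same lower bound as $\mathrm{sn}(nK)$. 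For the topological upper bound: the estimates of \cite{McCoy:2021, FMP:2022, BBL:2020} used in \cite[Theorem 1.11]{KMT:2021} are governed by the signature function and the Blanchfield pairing, whose leading asymptotics are unchanged under $t \mapsto t^w$, so $\mathrm{sn}^{\mathrm{Top}}(nP(K))$ obeys the same upper bound as $\mathrm{sn}^{\mathrm{Top}}(nK)$. Writing $K = j\,T_{3,11}$, so $nK = (nj)\,T_{3,11}$, \cite[Theorem 1.11]{KMT:2021} shows that the difference of these two bounds grows linearly in $nj$; hence $\mathrm{sn}(nP(K)) - \mathrm{sn}^{\mathrm{Top}}(nP(K)) \to \infty$.

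The genuinely new input is the first two steps, and both are immediate consequences of \Cref{thm:mainhomotopytype} together with standard satellite formulas. The main obstacle is the last point: one must check that not only the real Seiberg--Witten lower bound on $\mathrm{sn}$ but also the \emph{topological} upper bound from \cite{McCoy:2021, FMP:2022, BBL:2020} is insensitive to passing from $nK$ to its satellite $nP(K)$ — that is, that the relevant invariants depend on the signature function and the Blanchfield pairing only through quantities whose linear growth rate is preserved by the reparametrization $t \mapsto t^w$. Granting this, all the quantitative content — in particular the strict separation of the smooth and topological growth rates — is inherited verbatim from \cite[Theorem 1.11]{KMT:2021}.
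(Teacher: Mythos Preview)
Your smooth lower bound argument is correct and matches the paper: both use \Cref{thm:KGsplit} applied to a null-homologous disk in a punctured $k(S^2\times S^2)$, Litherland's formula to get $\sigma(nP(K))=\sigma(nK)$, and \Cref{thm:mainhomotopytype}/\Cref{cor:mainhomotopytype} to get $\kappa_R(nP(K))=\kappa_R(nK)$, yielding $\mathrm{sn}(nP(K))\geq 9nm$.

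The gap is in the topological upper bound, and your proposed resolution is not the right one. The bound from \cite{BBL:2020} is a geometric count of null-homologous twists, and the resulting algebraic genus bound from \cite{McCoy:2021} is \emph{not} controlled by the signature function or the Blanchfield pairing; the reparametrization $t\mapsto t^w$ does nothing for these quantities. What the paper actually uses from \cite{McCoy:2021,FMP:2022} is a different statement: the \emph{satellite inequality for the algebraic genus},
\[
g_{\mathbb Z}(P(K)) \leq g_{\mathbb Z}(P(U)) + g_{\mathbb Z}(K).
\]
Since $P(U)=U$, this gives $g_{\mathbb Z}(P(K))\leq g_{\mathbb Z}(K)\leq 8m$ directly (the last inequality from \cite{BBL:2020} plus \cite[Theorem~1.1]{McCoy:2021}), and then subadditivity and Conway--Nagel yield $\mathrm{sn}^{\mathrm{Top}}(nP(K))\leq g_4^{\mathrm{Top}}(nP(K))\leq n\,g_4^{\mathrm{Top}}(P(K))\leq 8nm$. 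So the ``obstacle'' you flag is genuine, but the fix is this one-line satellite inequality for $g_{\mathbb Z}$, not an argument about classical abelian invariants.
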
 

\begin{proof}
    Suppose that $K$ is $mT_{3,11}$ for some positive integer $m$. In \cite{BBL:2020}, Baader, Banfield, and Lewark proved that $mT_{3,11}$ can be transformed into the unknot using $8m$ null-homologous twists. Recall that a \emph{null-homologous twist} is an operation on an oriented knot that inserts a full twist into $2\ell$ parallel strands, where $\ell$ strands are oriented upwards and $\ell$ strands are oriented downwards. By the work of McCoy~\cite[Theorem 1.1]{McCoy:2021}, this implies that the \emph{algebraic genus} $g_\mathbb{Z}^{\mathrm{Top}}$, defined as the minimal genus of a locally flat oriented surface embedded in $B^4$ that is bounded by the knot and whose complement has fundamental group $\mathbb{Z}$, for $mT_{3,11}$ is at most $8m$.  

    Next, we apply the inequality
\[
g_\mathbb{Z}(P(K)) \leq g_\mathbb{Z}(P(U)) + g_\mathbb{Z}(K),
\]
which is due to McCoy~\cite[Theorem 1.4]{McCoy:2021} and Feller, Miller, and Caicedo~\cite[Theorem 1.1]{FMP:2022}. From this, we obtain
\[
g_4^{\mathrm{Top}}(P(K)) \leq g_\mathbb{Z}(P(K)) \leq 8m,
\]
for any pattern $P$ such that $P(U)$ is the unknot. Here, $g_4^{\mathrm{Top}}(K)$ denotes the standard \emph{topological slice genus} of a knot $K$, defined as the minimal genus of a locally flat oriented surface embedded in $B^4$ bounded by $K$.

 By a theorem of Conway and Nagel~\cite[Theorem 5.15]{CN:2020}, which states that for a knot $K$ with $\mathrm{Arf}(K) = 0$, the stabilizing number $\mathrm{sn}^{\mathrm{Top}}(n P(K))$ is bounded above by $g_4^{\mathrm{Top}} (nP(K))$, we conclude that 
    \[
    \mathrm{sn}^{\mathrm{Top}}(n P(K)) \leq g_4^{\mathrm{Top}} (nP(K)) \leq ng_4^{\mathrm{Top}} (P(K))\leq 8nm
    \]
    for each positive integer $n$ and $m$.

    Now, we will prove that  
\[
9nm \leq \mathrm{sn}(n P(K)),
\]  
which will complete the proof. To do so, we apply the first inequality of \cref{thm:KGsplit}, as in \cite[Theorem 1.10]{KMT:2021}.  
Let \( p = \mathrm{sn}(n P(K)) \), and take \( X \) to be the twice-punctured \( pS^2 \times S^2 \), and \( S \) to be a null-homologous disk bounded by \( n P(K) \) in \( pS^2 \times S^2 \). The inequality then yields:  
\begin{equation}\label{eq:sn}
    -\frac{9}{16} \sigma(n P(K)) - \kappa_R(n P(K)) \leq \mathrm{sn}(n P(K)).    
\end{equation}  
A formula of Litherland~\cite[Theorem 2]{Litherland:1979} implies that  
\[
\sigma(n P(K)) = \sigma(n K) = \sigma(nm T_{3,11}) = -16nm.
\]  
Finally, by \cref{thm:mainhomotopytype}, we have that the local equivalence class of \( n P(K) \) is equal to the local equivalence class of \( nm T_{3,11} \). Therefore, combining this with \cref{cor:mainhomotopytype} and \cite[Theorem 1.7 and Theorem 1.8]{KMT:2021}, we obtain  
\[
\kappa_R(n P(K)) = \kappa_R(nm T_{3,11}) = nm \left(\kappa_R( T_{3,11})\right)  
= nm \left(-\frac{1}{2}\overline{\mu}( \Sigma(2,3,11)) \right) = 0,
\]  
where \( \overline{\mu} \) is the Neumann-Siebenmann invariant. Applying the above computations to \eqref{eq:sn} completes the proof.
\end{proof}

\section{Invariants of homology $S^1\times S^3$}\label{homologys1s3}

Let $X$ be an oriented homology $S^1 \times S^3$, and let $\widetilde{X}$ denote its unique nontrivial double cover. A similar argument to the homology $S^1 \times S^2$ case yields the following:

\begin{lem}
There is a unique real spin$^c$ structure on $\widetilde{X}$ up to isomorphism and sign.
\end{lem}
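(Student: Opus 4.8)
The plan is to transport the question, via the free involution, to a computation of twisted cohomology of $X$ itself, in direct analogy with the proof of \cref{s1s2}. The covering involution $\tau$ on $\widetilde{X}$ is free with quotient $\widetilde{X}/\tau = X$, so by \cref{equivalence_defn} (applied with $n=4$) isomorphism classes of real spin$^c$ structures on $(\widetilde{X},\tau)$ correspond bijectively to isomorphism classes of spin$^{c-}$ structures on $(X,l)$, where $l = \widetilde{X}\times_{\mathbb{Z}_2}\mathbb{Z}$ is the $\mathbb{Z}$-local system of the cover. It therefore suffices to establish existence and uniqueness of a spin$^{c-}$ structure on $(X,l)$, and for both I would invoke the free case of \cref{lem:spinc4manifold}.

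For existence: since $H_2(X;\mathbb{Z})=0$ and $H_1(X;\mathbb{Z})\cong\mathbb{Z}$ is torsion free, the universal coefficient theorem gives $H^2(X;\mathbb{Z}_2)=0$; in particular $w_2(X)=0$ and $w_1(l)^2=0$. Hence the $O(2)$-bundle $E:=(l\otimes_{\mathbb{Z}}\mathbb{R})\oplus\underline{\mathbb{R}}$ over $X$ satisfies $w_1(E)=w_1(l)$ and $w_2(X)+w_2(E)+w_1(E)^2=0$, so by the free case of \cref{lem:spinc4manifold} a real spin$^c$ structure on $\widetilde{X}$ exists.

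For uniqueness: by the classification statement in the free case of \cref{lem:spinc4manifold}, the set of isomorphism classes of real spin$^c$ structures on $\widetilde{X}$ is a torsor over $H^2(X;l)$. By Poincar\'e duality with local coefficients on the closed oriented $4$-manifold $X$ (the orientation system being trivial) this group is isomorphic to $H_2(X;l)$. Since $X$ is a homology $S^1\times S^3$, its infinite cyclic cover $\overline{X}$ is a $\mathbb{Z}$-homology $3$-sphere, so $H_1(\overline{X})=H_2(\overline{X})=0$; feeding this into the Wang-type exact sequence $H_2(\overline{X})\xrightarrow{t+1}H_2(\overline{X})\to H_2(X;l)\to\ker\big(t+1\colon H_1(\overline{X})\to H_1(\overline{X})\big)$ that computes $H_\ast(X;l)$ from the $\mathbb{Z}[t,t^{-1}]$-module structure of $H_\ast(\overline{X};\mathbb{Z})$ forces $H_2(X;l)=0$. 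Thus the torsor is a single point, giving uniqueness up to isomorphism; the residual sign ambiguity $I\leftrightarrow -I$ is harmless, since (as in the remark after \cref{classification:real_spin}) multiplication by $i\in U(1)$ is an isomorphism of $(\mathbb{S},\rho,I)$ with $(\mathbb{S},\rho,-I)$.

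The only step that is not essentially formal is the vanishing $H^2(X;l)=0$: this is where the homology $S^1\times S^3$ hypothesis is genuinely used, and it is the analogue here of the inputs $w_1(l)^2=0$ and triviality of pulled-back line bundles used in \cref{s1s2}. The rest — the spin$^{c-}$ dictionary and the vanishing of $H^2(X;\mathbb{Z}_2)$ — is soft. The detail requiring the most care is keeping the three local systems straight (the $\mathbb{Z}$-system $l$ on $X$, its pullback to $\widetilde{X}$, and the orientation system) when applying Poincar\'e duality and the Wang sequence.
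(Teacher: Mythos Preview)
Your uniqueness argument contains a genuine gap. The assertion that the infinite cyclic cover $\overline{X}$ of a homology $S^1\times S^3$ has $H_1(\overline{X})=H_2(\overline{X})=0$ is false in general: the Wang sequence for $H_*(X;\mathbb{Z})$ only shows that $t-1$ acts as an isomorphism on $H_1(\overline{X})$, not that the group vanishes. For an explicit counterexample, let $X$ be surgery on the $2$-twist spun trefoil. By Zeeman's theorem this $2$-knot is fibered with closed fiber $L(3,1)$ and monodromy the branched-covering involution, so $H_1(\overline{X})\cong\mathbb{Z}/3$ with $t$ acting by $-1$. Then $t+1$ is the zero map on $H_1(\overline{X})$, your Wang sequence gives a nonzero contribution, and in fact one computes directly that $\widetilde{X}\cong S^1\times L(3,1)$ and $H^2(X;l)\cong H_2(X;l)\cong H_2(\widetilde{X})\cong\mathbb{Z}/3$. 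So the torsor is not a point, and the route through $H^2(X;l)$ cannot establish uniqueness of real spin$^c$ structures in general.

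The paper proceeds differently: despite the word ``spin$^c$'' in the statement, the proof invokes \cref{classification:real_spin} (the real \emph{spin} classification) rather than \cref{lem:spinc4manifold}, and the Bauer--Furuta map written immediately afterward uses $-\sigma(\widetilde{X})/16$, which is the spin normalization. In that classification the ambiguity is governed by real line bundles on $\widetilde{X}$ carrying an order-two lift of $\tau$, equivalently real line bundles on $X$; since $H^1(X;\mathbb{Z}_2)\cong\mathbb{Z}_2$ and the nontrivial bundle $l$ pulls back trivially to $\widetilde{X}$, the two choices differ only by the sign of $I$. Your existence argument is correct and agrees with the paper's, but for uniqueness you should either switch to the real spin classification or note that the spin$^c$ version of the statement does not hold as written.
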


\begin{proof}
This follows by an argument similar to that of \cref{s1s2}. Let $l$ be the nontrivial real line bundle over $X$ corresponding to the double cover $\widetilde{X} \to X$. Then we have
\[
w_2(X) + w_1(l)^2 = 0.
\]
By \cref{classification:real_spin}, this implies that $\widetilde{X}$ admits a real spin structure.

Moreover, by the same classification, the isomorphism classes of such structures correspond to real line bundles on $\widetilde{X}$ equipped with lifts of the involution. However, this construction lifts to a complex line bundle over $X$, and all such bundles are topologically trivial. Hence, the real spin$^c$ structure on $\widetilde{X}$ is unique.
\end{proof}

With respect to the real spin$^c$ structure on $\widetilde{X}$, we have the associated real Bauer--Furuta invariant  
\[
BF^{R}_{\widetilde{X}} \colon S^{-\frac{1}{16} \sigma(\widetilde{X})} = S^0 \to S^{b^+(\widetilde{X}) - b^+(X)} = S^0.
\]
We define the invariant
\[
|\deg(X)| \in \mathbb{Z}_{\geq 0}
\]
to be the degree of the real Bauer--Furuta invariant of $X$ with respect to the real spin$^c$ structure. One can easily verify the following.

\begin{lem}\label{PSC}
The value $|\deg(X)|$ is invariant under orientation preserving diffeomorphisms between oriented homology $S^1 \times S^3$ manifolds.  
Moreover, if $X$ admits a positive scalar curvature metric, then
\[
|\deg(X)| = 1.
\]
\end{lem}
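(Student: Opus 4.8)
The statement has two parts: diffeomorphism invariance and the positive scalar curvature vanishing. The plan is as follows.

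\emph{Invariance under orientation preserving diffeomorphisms.} The quantity $|\deg(X)|$ is defined as the mapping degree of the real Bauer--Furuta invariant $BF^R_{\widetilde X}\colon S^0 \to S^0$ associated to the (by the preceding lemma, essentially unique) real spin$^c$ structure on $\widetilde X$. First I would note that an orientation preserving diffeomorphism $\varphi\colon X \to X'$ lifts to an equivariant diffeomorphism $\widetilde\varphi\colon \widetilde X \to \widetilde{X'}$ between the double covers, since the double covers are canonically determined by the nontrivial map $\pi_1 \twoheadrightarrow \mathbb Z_2$, which is preserved by $\varphi$. Since the real spin$^c$ structure is unique up to isomorphism and sign, $\widetilde\varphi$ pulls back the real spin$^c$ structure on $\widetilde{X'}$ to the one on $\widetilde X$ (up to the sign ambiguity, which as remarked earlier does not affect the theory essentially). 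Naturality of the Bauer--Furuta construction under such equivariant diffeomorphisms then gives that $BF^R_{\widetilde X}$ and $BF^R_{\widetilde{X'}}$ agree up to $G$-equivariant homotopy and a sign, hence have the same degree in absolute value. The only subtlety is checking that the sign/choice ambiguities are genuinely invisible to $|\deg|$; this is where I would cite the remark following \cref{classification:real_spin} that multiplication by $i \in U(1)$ identifies the two choices.

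\emph{Positive scalar curvature forces $|\deg(X)| = 1$.} If $X$ carries a PSC metric, pull it back to a $\mathbb Z_2$-invariant PSC metric $\widetilde g$ on $\widetilde X$. The real Bauer--Furuta invariant of $\widetilde X$ is computed from a finite dimensional approximation of the real Seiberg--Witten map on $\widetilde X$; as in the non-equivariant setting, with a PSC metric the Weitzenböck formula forces the only real Seiberg--Witten solution to be reducible. Here one must use that $\widetilde X$ is a homology $S^1\times S^3$, so $b_2(\widetilde X)=0$ and there are no reducibles carrying interesting topology: the moduli space is, after cutting down, a single transverse point, and the Bauer--Furuta map reduces to a linear isomorphism between the relevant representation spheres, which is $S^0 \to S^0$ of degree $\pm 1$. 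Concretely I would argue that the Dirac part of the approximated map has no kernel once the PSC term dominates, so the map is (homotopic to) the one-point compactification of a linear isomorphism, giving $|\deg(X)|=1$.

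\emph{Main obstacle.} I expect the bookkeeping around the \emph{sign and isomorphism ambiguities} of the real spin$^c$ structure, together with making the PSC vanishing argument precise in the $G$-equivariant finite dimensional approximation, to be the main technical points — in particular verifying that after removing the reducible one genuinely obtains a based linear isomorphism of $G$-representation spheres rather than merely a stable equivalence, so that the \emph{integer} degree (not just stable class) is $\pm 1$. Everything else is routine naturality of Bauer--Furuta and the Weitzenböck argument, both of which are available from the cited constructions in \cite{Mi23, KMT:2024}.
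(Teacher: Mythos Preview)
Your proposal is correct and follows essentially the same approach as the paper: the paper dismisses diffeomorphism invariance as routine (relying implicitly on the uniqueness of the real spin$^c$ structure and naturality of the Bauer--Furuta construction, exactly as you spell out), and for the PSC part it invokes the Weitzenböck formula to rule out irreducible real solutions on $\widetilde X$, leaving only the reducible and hence mapping degree $\pm 1$. Your more careful discussion of the sign ambiguity and the linear-isomorphism structure of the approximated map is more detailed than the paper's treatment, but the underlying argument is the same.
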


\begin{proof}
The invariance under diffeomorphisms is routine.

Suppose now that $X$ admits a positive scalar curvature metric.  
Using the standard argument involving the Weitzenböck formula, we see that there are no real solutions to the Seiberg--Witten equations on $\widetilde{X}$ for any real spin$^c$ structure. This implies that only the reducible solution exists. In particular, the real Bauer--Furuta invariant $BF^R_{\widetilde{X}}$ has mapping degree $\pm 1$, so $|\deg(X)| = 1$. This completes the proof.
\end{proof}

Recall that for a given 2-knot $K$ in $S^4$, we denote by $X(K)$ the 4-manifold obtained by performing surgery along $K$.  
We now examine the relationship between $|\deg(K)|$ and $|\deg(X(K))|$.  
Although we assume that the 2-knot lies in $S^4$, the proof in fact applies to any 2-knot in a homology $S^4$.

\begin{prop}\label{degeq}
For any smooth $2$-knot $K$ in $S^4$, we have
\[
|\deg(K)| = |\deg(X(K))|.
\]
\end{prop}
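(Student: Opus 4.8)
The plan is to realize both $\Sigma_2(S^4,K)$, on which $|\deg(K)|$ is computed, and the double cover $\wt{X(K)}$, on which $|\deg(X(K))|$ is computed, as the union of one common piece with a ``trivial'' cap, and then to apply the gluing formula \cref{gluingformula} together with a positive scalar curvature argument of the same kind used in the proof of \cref{homotopyinv}.

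First I would set up the two decompositions. Write $E := S^4 \setminus \operatorname{int}(\nu(K))$, a spin $4$-manifold with $\partial E = S^2 \times S^1$, and let $\wt E \to E$ be the unique connected double cover with covering involution $\tau$; on $\partial \wt E = S^2 \times S^1$ the involution is $(x,z) \mapsto (x,-z)$ for $z \in S^1 \subset \C$. Passing to double branched covers in the decomposition $S^4 = E \cup_{S^2 \times S^1}(S^2 \times D^2)$ along the boundary of a tubular neighborhood of $K$ gives
\[
\Sigma_2(S^4,K) = \wt E \cup_{S^2 \times S^1}(S^2 \times D^2),
\]
where $\tau$ on the cap $S^2 \times D^2$ is $(x,z) \mapsto (x,-z)$, with fixed set $S^2 \times \{0\}$ the lift of $K$. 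On the other hand $X(K) = E \cup_{S^2 \times S^1}(S^1 \times D^3)$, and a Mayer--Vietoris computation shows that $X(K)$ is a homology $S^1 \times S^3$ and that the meridian of $K$ generates the $\Z_2$-quotient of $H_1$; hence the nontrivial double cover of $X(K)$ restricts to $\wt E$ over $E$ and to the connected double cover of $S^1 \times D^3$ in the $S^1$-factor, so that
\[
\wt{X(K)} = \wt E \cup_{S^2 \times S^1}(S^1 \times D^3),
\]
with $\tau$ on $S^1 \times D^3$ the free involution $(z,x) \mapsto (-z,x)$. In both decompositions the action on $S^2 \times S^1$ is the same covering involution.

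Next I would deal with the spin structures and the hypotheses of the gluing theorem. As $K$ is a $2$-knot, $\Sigma_2(S^4,K)$ is a homology $4$-sphere, hence spin with its covering involution of odd type with respect to the codimension-$2$ branch surface, and carries a real spin structure. Since $H^2(X(K);\Z_2) = 0$, the manifold $X(K)$ is spin, so $\wt{X(K)}$ is spin and carries a real spin structure, unique up to isomorphism and sign by \cref{classification:real_spin}. The real spin structures on $(S^2 \times S^1,\tau)$ induced from $\Sigma_2(S^4,K)$ and from $\wt{X(K)}$ agree up to sign, by the same argument that pins down the real spin structure on the gluing torus in the proof of \cref{exthm} (only one of the possibilities extends over a positive scalar curvature filling), and by the remark following \cref{classification:real_spin} the residual ambiguity in the real spin structure of $\wt E$ is irrelevant to the real Bauer--Furuta invariant; thus the relative invariant $BF^R_{\wt E}$ is the same object in both decompositions. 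The hypotheses of \cref{gen_glue}, hence of \cref{gluingformula}, are checked by Mayer--Vietoris exactly as for $X^0$ in \cref{exthm}: in particular $H^1(\wt E;\R)^{-\tau^*} = 0$ and $H^0(\wt E;\Z)^{-\tau^*} = 0$, the signature and $b^+$ of all pieces vanish, and the relevant quotients are connected.

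Then I would apply the gluing formula along $(S^1 \times S^2,\tau)$, using $SWF_R(S^1 \times S^2,\tau) = S^0$, which follows from a positive scalar curvature metric on $S^1 \times S^2$ just as $SWF_R(T^3) = S^0$ in the proof of \cref{homotopyinv}. \cref{gluingformula} then gives
\[
BF^R_{\Sigma_2(S^4,K)} = BF^R_{\wt E} \circ BF^R_{S^2 \times D^2}
\qquad\text{and}\qquad
BF^R_{\wt{X(K)}} = BF^R_{\wt E} \circ BF^R_{S^1 \times D^3}.
\]
Both caps admit $\Z_2$-invariant positive scalar curvature metrics that are cylindrical near the boundary — a round $S^2$ together with a rotation-invariant flat $D^2$ in the first case, a flat $S^1$ together with a round $D^3 \subset S^3$ in the second — so by the Weitzenb\"ock argument the real Seiberg--Witten equations on each cap have only the reducible solution, and both $BF^R_{S^2 \times D^2}$ and $BF^R_{S^1 \times D^3}$ are $\pm\operatorname{id} \colon S^0 \to S^0$. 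Hence the mapping degrees of $BF^R_{\Sigma_2(S^4,K)}$ and $BF^R_{\wt{X(K)}}$ coincide up to sign, which is precisely $|\deg(K)| = |\deg(X(K))|$. The step I expect to be the main obstacle is the second one: carefully matching the real spin structure on the common piece $\wt E$ across the two decompositions, so that Miyazawa's invariant $|\deg(K)|$ — defined via a preferred real spin structure on $\Sigma_2(S^4,K)$ — and $|\deg(X(K))|$ — defined via the real spin$^c$ structure on $\wt{X(K)}$ — really are read off from the same relative Bauer--Furuta invariant; the Mayer--Vietoris verifications and the positive scalar curvature computations are routine.
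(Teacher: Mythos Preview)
Your proposal is correct and follows essentially the same approach as the paper: decompose both $\Sigma_2(S^4,K)$ and $\wt{X(K)}$ along $S^1\times S^2$ into the common piece $\wt E$ and a cap, apply the gluing formula (\cref{gluingformula}) using $SWF_R(S^1\times S^2)=S^0$, and note that both caps have $BF^R=\pm\operatorname{id}$ via positive scalar curvature. Your treatment is in fact slightly more careful than the paper's on the point you flag as the main obstacle---matching the real spin structures on $\wt E$ across the two decompositions---which the paper leaves implicit.
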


\begin{proof}
This follows from the standard gluing argument in real Seiberg--Witten theory; see \cite{Mi23}. We briefly sketch the proof.

Decompose $S^4$ as
\[
S^4 = \left( S^4 \setminus \nu(K) \right) \cup_{S^1 \times S^2} \nu(K),
\]
and similarly decompose the double branched cover as
\[
\Sigma_2(K) = \left( \widetilde{S^4 \setminus \nu(K)} \right) \cup_{S^1 \times S^2} \Sigma_2(S^2 \times D^2, S^2 \times \{0\}).
\]
Here, $\nu (K)$ is a closed neighborhood of $K $ and the involution on $S^1 \times S^2$ is given by $(x, y) \mapsto (-x, y)$.

On the other hand, the surgery $X(K)$ admits the decomposition
\[
X(K) = \left( S^4 \setminus \nu(K) \right) \cup_{S^1 \times S^2} D^3 \times S^1.
\]
Taking the double cover, we obtain
\[
\widetilde{X(K)} = \left( \widetilde{S^4 \setminus \nu(K)} \right) \cup_{S^1 \times S^2} \widetilde{D^3 \times S^1}.
\]
Note that the gluing region $S^1 \times S^2$ is equipped with the $\mathbb{Z}_2$-action $\tau \colon (x, y) \mapsto (-x, y)$. Under this involution, one can verify that the real Floer homotopy type of $S^1 \times S^2$ is $S^0$.

With respect to these decompositions, we have
\begin{align*}
BF^R_{\Sigma_2(K)} &= BF^R_{\widetilde{S^4 \setminus \nu(K)}} \wedge_{SWF_R(S^1 \times S^2)} BF^R_{\Sigma_2(S^2 \times D^2, S^2 \times \{0\})}, \\
BF^R_{X(K)} &= BF^R_{\widetilde{S^4 \setminus \nu(K)}} \wedge_{SWF_R(S^1 \times S^2)} BF^R_{\widetilde{D^3 \times S^1}},
\end{align*}
where:
\begin{itemize}
    \item The notation $SWF_R(S^1 \times S^2)$ denotes the real Floer homotopy type of $S^1 \times S^2$ equipped with the involution
    \[
    S^1 \times S^2 \to S^1 \times S^2; \qquad (x, y) \mapsto (-x, y).
    \]
    
    \item The stable map
    \[
    BF^R_{\widetilde{S^4 \setminus \nu(K)}} \colon S^0 \to SWF_R(S^1 \times S^2)
    \]
    is the real Bauer--Furuta invariant of $\widetilde{S^4 \setminus \nu(K)}$, defined using the restricted real spin structure.
    
    \item The stable map
    \[
    BF^R_{\Sigma_2(S^2 \times D^2, S^2 \times \{0\})} \colon SWF_R(S^1 \times S^2) \to S^0
    \]
    is the real Bauer--Furuta invariant of $\Sigma_2(S^2 \times D^2, S^2 \times \{0\})$, defined using the restricted real spin structure.
    
    \item The stable map
    \[
    BF^R_{\widetilde{D^3 \times S^1}} \colon SWF_R(S^1 \times S^2) \to S^0
    \]
    is the real Bauer--Furuta invariant of $\widetilde{D^3 \times S^1}$, defined with respect to the involution
    \[
    D^3 \times S^1 \to D^3 \times S^1; \qquad (x, y) \mapsto (x, -y),
    \]
    together with the unique real spin structure on this space.
\end{itemize}

It is straightforward to check that:
\[
SWF_R(S^1 \times S^2) = S^0, \qquad
BF^R_{\Sigma_2(S^2 \times D^2, S^2 \times \{0\})} = \pm \id, \qquad \text{ and }\qquad
BF^R_{\widetilde{D^3 \times S^1}} = \pm \id.
\]
By the definition of $|\deg(K)|$, we have
\[
|\deg(BF^R_{\Sigma_2(K)})| = |\deg(K)|.
\]
Combining the identities above, we find
\[
|\deg(BF^R_{\Sigma_2(K)})| = |\deg(BF^R_{\widetilde{S^4 \setminus \nu(K)}})| = |\deg(BF^R_{X(K)})| = |\deg(X(K))|.
\] This completes the proof.
\end{proof}

\begin{cor}
Let $K$ be the $1$-roll spun knot of $P(k(-2,3,7))$, where $P$ is any odd pattern satisfying $P(U) = U$, and $k(-2,3,7)$ denotes the pretzel knot of type $(-2,3,7)$. Then the surgery $X(K)$ does not admit a positive scalar curvature metric.
\end{cor}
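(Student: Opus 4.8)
The plan is to combine the surgery-invariance of the degree invariant (\cref{degeq}) with the positive-scalar-curvature obstruction (\cref{PSC}) and a known nontriviality computation for the knot invariant $|\deg(\cdot)|$ of $P(k(-2,3,7))$ and its roll spun cousins. Concretely, write $K$ for the $1$-roll spun knot of $P(k(-2,3,7))$. By \cref{PSC}, it suffices to show $|\deg(X(K))| \neq 1$, and by \cref{degeq} we have $|\deg(X(K))| = |\deg(K)|$, so the problem reduces entirely to the computation of the $2$-knot invariant $|\deg(K)|$.

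First I would recall, as noted in the remark following \cref{degree}, that Miyazawa's $2$-knot invariants of twisted roll spun knots can be computed from the real Floer homotopy type of the double branched cover of the underlying classical knot: for a roll spun knot the double branched cover is obtained by surgery/mapping-torus-type constructions from $\Sigma_2(S^3, J)$ for the classical knot $J$, and \cite[Section 4]{Mi23} expresses $|\deg|$ of the spun knot in terms of the $\{\pm1\}$-framed real Seiberg--Witten count on $\Sigma_2(J)$ — which by \cite[Proposition 5.2]{KPT:2024} is $|\chi(\widetilde H^*(SWF_R(J)))|$. Thus $|\deg(K)|$ is controlled by $|\deg(P(k(-2,3,7)))|$. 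Now apply \cref{degree}: since $P$ is an odd-winding-number pattern with $P(U)=U$, we get $|\deg(P(k(-2,3,7)))| = |\deg(k(-2,3,7))|\cdot|\deg(U)| = |\deg(k(-2,3,7))|$. Finally, $|\deg(k(-2,3,7))| \neq 1$ is known: the pretzel knot $k(-2,3,7)$ has double branched cover the Brieskorn sphere $\Sigma(2,3,7)$ (up to orientation), whose real Floer homotopy type has been computed, and the degree invariant was shown to be nontrivial in \cite{Mi23, KPT:2024}. Feeding this back through the chain of equalities gives $|\deg(X(K))| = |\deg(K)| \neq 1$, so by \cref{PSC} the manifold $X(K)$ carries no positive scalar curvature metric.

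The main obstacle, and the step requiring the most care, is the precise identification of $|\deg|$ of the roll spun knot with the cohomological Euler characteristic $|\chi(\widetilde H^*(SWF_R(k(-2,3,7))))|$ — that is, making rigorous the assertion in the remark after \cref{degree} that the theorems of this paper compute Miyazawa's invariants for twisted roll spun $2$-knots. This involves tracking the $\mathbb{Z}_2$-equivariant decomposition of the double branched cover of a roll spun knot (a mapping torus construction glued along $S^1\times S^2$) through the gluing formalism of \cref{gen_glue} and \cref{gluingformula}, exactly as in the proof of \cref{degeq}, and then invoking \cite[Section 4]{Mi23}. Once that bookkeeping is in place, the remaining inputs — \cref{degree} for the satellite multiplicativity, \cite[Proposition 5.2]{KPT:2024} for the Euler characteristic formula, and the nonvanishing of the degree invariant of $\Sigma(2,3,7)$ from the explicit computation in \cite{Mi23} — are immediate, and \cref{degeq} together with \cref{PSC} closes the argument.
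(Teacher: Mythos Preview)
Your proposal is correct and follows essentially the same route as the paper: reduce to $|\deg(K)|$ via \cref{degeq}, pass from the roll spun $2$-knot to the classical knot $P(k(-2,3,7))$ via Miyazawa's formula \cite[Section~4 and Theorem~4.32]{Mi23}, apply the satellite multiplicativity \cref{degree} to strip off $P$, and then invoke \cref{PSC}. The only differences are cosmetic: the paper records the exact value $|\deg(k(-2,3,7))|=3$ rather than merely $\neq 1$, and it treats the roll spun identification as a direct citation to \cite[Theorem~4.32]{Mi23} rather than as an ``obstacle'' requiring further gluing bookkeeping---that work is already done in the reference.
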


\begin{proof}
From the formula in \cite[Section 4 and Theorem 4.32]{Mi23}, we have
\[
|\deg(K)| = |\deg(P(k(-2,3,7)))| = 3.
\]
Applying \cref{degeq}, we obtain
\[
|\deg(X(K))| = |\deg(P(k(-2,3,7)))| = |\deg(k(-2,3,7))| = 3.
\]
The conclusion then follows from \cref{PSC}.
\end{proof}

\noindent Using \cite[Corollary 1.6]{KPT:2024}, one can also treat more general Montesinos knots in a similar way.

\begin{rem} We remark that the known obstructions to the existence of positive scalar curvature metrics on closed 4-manifolds are as follows:
\begin{itemize}
    \item the signature of spin 4-manifolds,
    \item the Seiberg--Witten invariant for 4-manifolds with $b^+ > 1$,
    \item Schoen--Yau's hypersurface method \cite{schoen1987structure},
    \item enlargeability \cite{gromov1980classification}, and the Dirac obstruction \cite{rosenberg1986c}.
\end{itemize}

For example, a homology $S^1 \times S^3$ obtained as a mapping torus of an enlargeable 3-manifold is itself enlargeable, and therefore cannot admit a positive scalar curvature metric. Furthermore, for a homology $S^1 \times S^3$ with a rational homology 3-sphere of a certain class as a cross-section, there exist obstructions \cite{lin2018splitting, lin2019seiberg, konno2020positive, konno2023positive} arising from Seiberg--Witten gauge theory for periodic-end 4-manifolds or from monopole Floer theory applied to the 4-manifold cut open along the cross-section.

Our examples involve surgeries on twisted roll-spun 2-knots. In general, an $m$-twisted $l$-roll spun 2-knot is fibered when $m \neq 0$, in which case the resulting surgery yields a mapping torus. Depending on whether the fibered 3-manifold is enlargeable, this may obstruct the existence of a positive scalar curvature metric. This is why we specifically focus on the case of $0$-twisted $1$-roll spun knots.
\end{rem}

\begin{rem}
It is interesting to compare our invariant $|\deg(X)|$ for a homology $S^1 \times S^3$ manifold $X$ with other numerical invariants of such manifolds arising from Yang--Mills or Seiberg--Witten theory.

There are two well-known invariants of homology $S^1 \times S^3$ manifolds:
\begin{itemize}
    \item the Furuta--Ohta Casson-type invariant $\lambda_{FO}$~\cite{furuta1993differentiable}, defined via instanton counting, and
    \item the Mrowka--Ruberman--Saveliev Casson-type invariant $\lambda_{MRS}$~\cite{mrowka2011seiberg}, defined via Seiberg--Witten solutions with a correction term.
\end{itemize}

These two invariants are conjectured to be equal up to sign. However, when compared with $|\deg(X)|$, one can easily see that $|\deg(X)|$ is independent of them. For example, consider the case where $X = S^1 \times Y$ for an oriented integral homology $3$-sphere $Y$. Then it is easy to check that $|\deg(X)| = 1$, while
\[
\lambda_{FO}(Y) = -\lambda_{MRS}(Y) = \lambda(Y),
\]
where $\lambda(Y)$ denotes the classical Casson invariant of $Y$.
\end{rem}

\bibliographystyle{alpha}
\bibliography{tex}

\end{document}